\def \d {\mathrm{~d}}
\def \n {\mathbf{n}}
\begin{document}

\title{On uniform null controllability of transport-diffusion equations with vanishing viscosity limit}

\author[1]{Fouad ET-TAHRI}

\author[2]{Jon Asier B\'arcena-Petisco}

\author[3]{Idriss Boutaayamou*}

\author[4,5]{Lahcen Maniar}

\authormark{Fouad ET-TAHRI \textsc{et al}}

\address[1]{\orgdiv{Lab-SIV, Faculty of Sciences-Agadir}, \orgname{Ibnou Zohr University}, \orgaddress{\state{B.P. 8106, Agadir}, \country{Morocco}}}

\address[2]{\orgdiv{Department of Mathematics}, \orgname{University of the Basque Country UPV/EHU}, \orgaddress{\state{Barrio Sarriena s/n, 48940, Leioa}, \country{Spain}}}

\address[3]{\orgdiv{Lab-SIV, Multidisciplinary Faculty-Ouarzazate}, \orgname{Ibnou Zohr University}, \orgaddress{\state{BP 638, Ouarzazate 45000}, \country{Morocco}}}

\address[4]{\orgdiv{Cadi Ayyad University, Faculty of Sciences Semlalia}, \orgname{LMDP, UMMISCO (IRD-UPMC)}, \orgaddress{\state{B.P. 2390, Marrakesh}, \country{Morocco}}}

\address[5]{\orgdiv{University Mohammed VI Polytechnic, Vanguard Center}, \orgname{LMDP, UMMISCO (IRD-UPMC)}, \orgaddress{\state{Benguerir}, \country{Morocco}}}

\corres{*Idriss Boutaayamou. \email{dsboutaayamou@gmail.com}}

%\presentaddress{This is sample for %present address text this is sample for %present address text}

\abstract[Summary]{This paper aims to address an interesting open problem,
	posed in the paper "Singular Optimal Control for a
	Transport-Diffusion Equation" of Sergio Guerrero and Gilles Lebeau in 2007. The problem involves studying the null controllability cost of a transport-diffusion equation with Neumann conditions, where the diffusivity coefficient is denoted by $\varepsilon>0$ and the velocity by $\mathfrak{B}(x,t)$. Our objective is twofold. First, we investigate the scenario where each velocity trajectory $\mathfrak{B}$ originating from $\overline{\Omega}$ enters the control region in a shorter time at a fixed entry time.
	By employing Agmon and dissipation inequalities, and Carleman estimate in the case $\mathfrak{B}(x,t)$ is the gradient of a time-dependent scalar field, we establish that the control cost remains bounded for sufficiently small $\varepsilon$ and large control time. Secondly, we explore the case where at least one trajectory fails to enter the control region and remains in $\Omega$. In this scenario, we prove that the control cost explodes exponentially when the diffusivity approaches zero and the control time is sufficiently small for general velocity.}

\keywords{Carleman estimates, Uniform controllability, Transport equation, Singular limits, Cost control,  Agmon inequality}

\maketitle

\section{Introduction and main results}
Transport-diffusion equations with vanishing diffusivity are widely used to model various physical and biological phenomena. They play a significant role in fluid dynamics by describing the movement of particles while accounting for transport and diffusion effects, as it is explained in Chapter 3
of \cite{bahouri2011fourier} and the references therein.
\par 
Let $\Omega \subset \mathbb{R}^{d}$, $d\geq 1$ be a bounded open set,  $\Gamma$ denote the boundary of $\Omega$, $\mathbf{n}$ represent the outward unit normal field on $\Gamma$ and $\omega \subset \Omega$ be a nonempty open subset. 
Throughout this paper, the following notation will be consistently employed:
\begin{eqnarray*}
	\Omega_{T}:=\Omega\times (0,T), \quad \omega_{T}:=\omega\times (0,T) \quad \text{and} \quad \Gamma_{T}:=\Gamma\times (0,T),
\end{eqnarray*}
where $T>0$ is the control time. The main goal of this work is to study the cost of controllability of the following parabolic–transport equation with Neumann boundary conditions:
\begin{equation}\label{s1}
	\left\{
	\begin{aligned}
		\partial_t y-\varepsilon\Delta y+\mathfrak{B}(x,t)\cdot\nabla y &=u(x,t)\mathds{1}_{\omega} & & \text {in}\;\Omega_{T}, \\
		\partial_{\mathbf{n}} y &=0 & & \text {on}\;\Gamma_{T}, \\
		y(x,0)&=y_{0}(x) & & \text {in } \Omega,  \\	
	\end{aligned}
	\right.
\end{equation}
where $\varepsilon>0$ is the viscosity (diffusion coefficient) and $\mathfrak{B}$ is the velocity (speed), that satisfies:
\begin{equation*}
	\mathfrak{B}(x,t)=\nabla f(x,t)\quad \mbox{with }\quad f\in W^{2,\infty}(\mathbb{R}^{d}\times (0,\infty))
\end{equation*}
in case of uniform controllability in $\varepsilon$. The reason for this spatial extension (a regular open strictly containing $\Omega$ is sufficient) is to define geometric conditions for the trajectories of the vector field $\mathfrak{B}$, and the extension of time, to have norms of $\mathfrak{B}$ that do not depend on $T$ as considered in \cite{guerrero2007singular}. In the case of the controllability cost explosion, we only assume that  
\begin{equation*}
	\mathfrak{B}\in L^{\infty}(0,T; W^{1,\infty}(\Omega)^{d}),
\end{equation*}
where $\mathfrak{B}$ is a general velocity. In system \eqref{s1}, $y=y(x,t)$ represents the state, $u\in L^{2}(\omega_{T})$ is the control function that only affects the system through $\omega_{T}$, $\mathds{1}_{\omega}$ denotes the characteristic function of $\omega$ and $y_{0}\in L^{2}(\Omega)$ is the initial state. Let us start with the definition of null controllability:
\begin{definition}
	We say that system \eqref{s1} is null controllable at time $T$ if for every state $y_{0}\in L^{2}(\Omega)$, there exists a control $u\in L^{2}(\omega_{T})$ such that, the solution of \eqref{s1} satisfies $y(\cdot,T)=0$.
\end{definition}
\par
It is well known that system \eqref{s1} is null controllable for any control time $T$ and any control region $\omega$ when $\mathfrak{B}\in L^{\infty}(\Omega_{T})$. Specifically, in the case where $\varepsilon=1$, we can refer to \cite{fursikov1996controllability} and\cite{fernandez2006null} for further details. Furthermore, by scaling the time variable using the transformation $t\mapsto\frac{t}{\varepsilon}$, we can effectively reduce the problem to this specific case, as shown in \cite[Proposition 3.1]{et2024asymptotic}. It is a well-known fact that the controls depend continuously on the initial data. In other words, there exists a constant $C:=C(T,\varepsilon)>0$ such that
\begin{equation}
	\|u\|_{L^{2}(\omega_{T})}\leqslant C\|y_{0}\|_{L^{2}(\Omega)} \label{controllability}
\end{equation}
and the null controllability 
is equivalent to the following observability inequality:
\begin{eqnarray}
	\exists\,\mathcal{C}:=\mathcal{C}(T,\varepsilon)>0, \; \forall \varphi_{T}\in L^{2}(\Omega),\; \|\varphi(\cdot, 0)\|_{L^{2}(\Omega)}\leq \mathcal{C} \|\varphi\|_{L^{2}(\omega_{T})}, \label{observability}
\end{eqnarray}
where $\varphi$ is the solution of the adjoint system of \eqref{s1}:
\begin{equation}
	\left\{
	\begin{aligned}
		\partial_t \varphi+\varepsilon\Delta \varphi+\nabla \cdot\left(\varphi \mathfrak{B}(x,t)\right)  &=0 & & \text { in } \Omega_{T}, \\
		\left(\varepsilon \nabla\varphi +\varphi \mathfrak{B}(t,x)\right)\cdot\mathbf{n}(x) &=0 & & \text { on } \Gamma_{T}, \\
		\varphi(x,T)&=\varphi_{T}(x) & & \text { in } \Omega.
		\label{s2}
	\end{aligned}
	\right.
\end{equation}
\par
By employing the Hilbert Uniqueness Method \cite{lions1988controlabilite,russell1978controllability}, it can be shown that the optimal constants satisfying \eqref{controllability} and \eqref{observability} are equal, that is,
\begin{equation*}
	\sup_{y_{0}\in L^{2}(\Omega)\setminus\{0\}}\inf_{u\in \mathbb{A}(y_{0})}\frac{\| u\|_{L^{2}(\omega_{T})}}{\| y_{0}\|_{L^{2}(\Omega)}}=\sup_{\varphi_{T}\in L^{2}(\Omega)\setminus\{0\}}\frac{\| \varphi(\cdot, 0)\|_{L^{2}(\Omega)}}{\| \varphi\|_{L^{2}(\omega_{T})}},
\end{equation*}
where $\varphi$ is the solution of the adjoint system \eqref{s2} and $$\mathbb{A}(y_{0}):=\left\{u\in L^{2}(\omega_{T}): \;\text{the solution of}\;\eqref{s1}\;\text{satisfies}\;y(\cdot,T)=0 \right\}.$$
\par
In the sequel, we will adopt the following definition:
\begin{definition} 
	We define the cost of null controllability of system \eqref{s1} by the following quantity: 
	\begin{eqnarray}
		\mathcal{K}(\varepsilon,T,\Omega,\omega):=\sup_{\varphi_{T}\in L^{2}(\Omega)\setminus\{0\}}\frac{\| \varphi(\cdot, 0)\|_{L^{2}(\Omega)}}{\| \varphi\|_{L^{2}(\omega_{T})}}. \label{cost of control}
	\end{eqnarray}
\end{definition}
The main objective of this paper is to investigate the asymptotic behavior of the cost of null controllability of system \eqref{s1} when the viscosity is small enough. To elucidate the main findings of this paper, let us examine the trajectories of the vector field $\mathfrak{B}$ given by the mapping $t\mapsto \varPhi(t,t_{0},x_{0})$:
\begin{equation} \label{OD}
	\left\{
	\begin{aligned}
		\frac{d}{dt}\varPhi(t,t_{0},x_{0}) &=\mathfrak{B}(\varPhi(t,t_{0},x_{0}),t) & & t\in (0,T), \\
		\varPhi(t_{0},t_{0},x_{0}) &=x_{0},
	\end{aligned}
	\right.
\end{equation}
for each $(x_{0},t_{0})\in \mathbb{R}^{d}\times [0,T]$. The solutions of the ordinary differential equation \eqref{OD} encompass all relevant information regarding the trajectories of a particle moving with velocity $\mathfrak{B}$.
The following notations will be useful in the sequel.\\
\textbf{Notations.}
\begin{enumerate}[label=(N\arabic*), ref=(N\arabic*)]
	\item  The canonical Euclidean scalar product of $\mathbb{R}^{d}$ is denoted by $\cdot$ and $\left|\cdot\right|$ stands for the
	associated canonical Euclidean norm.
	\item For all $x,y\in\mathbb{R}^{d}$ and all $r>0$, $B(x,r)$  and $\overline{B}(x,r)$ are the open and closed balls of center $x$ and radius $r$, respectively.
	\item For %$A$ and $B$ two parts of $\mathbb{R}^{d}$, 
    $A,B\subset\mathbb{R}^{d}$,    $\mbox{dist}(x,A)$ and $\mbox{dist}(A,B)$ designate the distance from $x$ to $A$ and the distance between $A$ and $B$, respectively.
\end{enumerate}
The following definition serves a specific purpose that is essential to prove our first main result.
\begin{definition} \label{definition}
	Let $T_{0}\in (0,T)$, $r_{0}>0$ and $\mathcal{O}\subset\mathbb{R}^{d}$ nonempty open. We say that $(T,T_{0},r_{0},\mathfrak{B},\Omega)$ satisfies the flushing condition \eqref{Flushing Condition} for $\mathcal{O}$ if 
	\begin{equation}\tag{$\mathcal{F}\mathcal{C}$} \label{Flushing Condition}
		\forall x_{0}\in\overline{\Omega},\;\forall t_{0}\in [T_{0},T],\; \exists\, t\in (t_{0}-T_{0},t_{0}),\;\forall x\in\overline{B}(x_{0},r_{0}), \varPhi(t,t_{0},x_{0})\in \mathcal{O}. 
	\end{equation}
    
\end{definition}
\begin{remark} \label{Remark 1}
 When the ODE \eqref{OD} is autonomous, i.e., $\mathfrak{B}=\mathfrak{B}(x)$, we can characterize \eqref{Flushing Condition} by any backward trajectories of $\mathfrak{B}$ originating from $\overline{\Omega}$ at time $0$ enter the open set $\mathcal{O}$. This characterization is discussed in Proposition \ref{P5} of Section \ref{Section 2}.
\end{remark}
We will show that if every backward trajectory of $\mathfrak{B}(x,t)$ starting from $\overline{\Omega}$ enters the control region within a time that does not surpass a fixed time barrier, then the cost of null controllability of \eqref{s1} remains uniformly bounded with respect to $\varepsilon$ when it is small enough and the control time is sufficiently large. To be more precise, we establish the following theorem.
\begin{theorem}
	Under the following conditions:
	\begin{enumerate}[label=(\arabic*), ref=(\arabic*)]
		\item $\Omega\subset \mathbb{R}^{d}$ is a $\mathcal{C}^{2}$ domain, $d\geq 1$ and $\omega\subset\Omega$ is a nonempty open subset, 
		\item $\mathfrak{B}=\nabla f$ with $f\in W^{2,\infty}(\mathbb{R}^{d}\times (0,\infty))$,
		\item \label{cc3} there exist $T_{0}\in (0,T)$ and $r_{0}>0$ such that, $(T,T_{0},r_{0},\mathfrak{B},\Omega)$ satisfies \eqref{Flushing Condition} for $\omega$, 
		\item $\forall (x,t)\in\Gamma\times (0,\infty),\; \partial_{\n}f(x,t)\geq c$ for some $c>0$.
	\end{enumerate}
	There exists a constant $\rho_{0}\geq 1$ depending only on $T_{0}$, $r_{0}$ and $f$ such that, if $T\geq \rho_{0} T_{0}$, there exists a 
	constant $C>0$ independent of $\varepsilon$ that satisfies the following estimate:
	\begin{equation} \label{cost1}
		\mathcal{K}(\varepsilon,T,\Omega,\omega)\leqslant C,\quad\text{for}\;\varepsilon\;\text{small enough,}
	\end{equation}
	where $\mathcal{K}$ is the cost of null controllability of \eqref{s1}. \label{m1}
\end{theorem}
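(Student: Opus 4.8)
The plan is to prove the observability inequality \eqref{observability} for the adjoint system \eqref{s2} with a constant $\mathcal{C}$ that stays bounded as $\varepsilon\to 0$, and then invoke the duality identity already recorded in the excerpt. As is standard for uniform controllability with vanishing viscosity, the point is to combine a \emph{dissipation estimate} (coming from the diffusion term, showing $\|\varphi(\cdot,0)\|$ does not blow up backward in time over $[0,T-\sigma]$ for a fixed $\sigma$) with a \emph{transport/Agmon-type estimate} on the remaining window $[T-\sigma,T]$ that uses the flushing condition \ref{cc3} to push the mass of $\varphi$ into $\omega$. Concretely, I would first reduce the gradient drift $\mathfrak B=\nabla f$ by the change of unknown $\varphi=e^{f/(2\varepsilon)}\psi$ (or rather the version adapted to the adjoint equation in divergence form), which converts \eqref{s2} into a self-adjoint parabolic equation $\partial_t\psi+\varepsilon\Delta\psi-V_\varepsilon\psi=0$ with potential $V_\varepsilon=\frac{1}{4\varepsilon}|\nabla f|^2+\frac12\Delta f+\frac{1}{2\varepsilon}\partial_t f$; hypothesis (2), $f\in W^{2,\infty}$, is exactly what makes this potential controllable in the relevant norms, and hypothesis (4), $\partial_{\mathbf n}f\ge c>0$, is what makes the transformed Neumann condition have a favorable sign on $\Gamma_T$ (this is the conjugation that makes the boundary term in energy estimates dissipative rather than destabilizing).

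Next I would set up the two pieces. For the dissipative piece: a weighted energy estimate (Agmon + dissipation inequalities, as advertised in the abstract) gives a bound of the form $\|\varphi(\cdot,0)\|_{L^2(\Omega)}\le e^{C(1+1/\varepsilon)\,?}\|\varphi(\cdot,T-\sigma)\|_{L^2(\Omega)}$ — but the whole subtlety is that the $f$-conjugation must absorb the dangerous $1/\varepsilon$ factor so that the surviving exponent is $O(1)$ once $T-\sigma$ is of order $\rho_0 T_0$; this is where the "$T\ge\rho_0 T_0$" threshold enters, i.e. one needs enough time for the parabolic smoothing to beat the transport cost. For the transport piece on $[T-\sigma,T]$ (with $\sigma\sim T_0$): here I would use a global Carleman estimate for the conjugated parabolic operator with a weight built from the flow $\varPhi$ — one chooses a weight function $\eta(x,t)$ that decreases along backward trajectories of $\mathfrak B$ and whose gradient is nonvanishing outside $\omega$, which is possible precisely because \eqref{Flushing Condition} guarantees every trajectory from $\overline\Omega$ (even a $\overline B(x_0,r_0)$-neighborhood of it) reaches $\omega$ in time $<T_0$. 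The Carleman estimate then yields $\|\varphi(\cdot,T-\sigma)\|_{L^2(\Omega)}\le C\|\varphi\|_{L^2(\omega_T)}$ with $C$ independent of $\varepsilon$ for $\varepsilon$ small, because the large Carleman parameter can be taken of order $1/\varepsilon$ and the weight's transport monotonicity supplies a term that dominates the zeroth-order potential $\frac1{4\varepsilon}|\nabla f|^2$.

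Assembling: chain the two inequalities to get $\|\varphi(\cdot,0)\|_{L^2(\Omega)}\le C\|\varphi\|_{L^2(\omega_T)}$ with $C$ uniform in small $\varepsilon$, which is \eqref{observability} with bounded $\mathcal{C}$, hence \eqref{cost1}. I expect the \textbf{main obstacle} to be the Carleman estimate on the transport window: constructing a weight function that is simultaneously (i) strictly monotone along the backward flow of the time-dependent field $\mathfrak B=\nabla f$ on a neighborhood of $\overline\Omega$ large enough to exploit the $r_0$-margin in \eqref{Flushing Condition}, (ii) free of critical points outside $\omega$, and (iii) compatible with the conjugated Neumann boundary condition (sign of $\partial_{\mathbf n}$ of the weight on $\Gamma$, controlled via hypothesis (4)), all while tracking every constant so that the terminal estimate has no hidden $\varepsilon$-dependence. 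A secondary delicate point is calibrating $\rho_0$: the dissipation estimate's exponent and the Carleman estimate's constant both depend on how the time budget is split between $[0,T-\sigma]$ and $[T-\sigma,T]$, and one must check that there is a consistent choice of $\sigma\in(T_0,T)$ and parameters making both $O(1)$, which is what pins down $\rho_0$ in terms of $T_0,r_0,f$ only.
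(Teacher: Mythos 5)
Your list of ingredients is the right one (the conjugation $\varphi=e^{f/2\varepsilon}\Phi$, a Carleman estimate for the conjugated operator, Agmon/dissipation estimates exploiting the flushing condition, and the sign condition $\partial_{\n}f\geq c$ to handle the boundary term), but the mechanism by which you propose to combine them is not the one that works, and the step you flag as the ``main obstacle'' is in fact a step that fails as stated. You ask the Carleman estimate on the window $[T-\sigma,T]$ to deliver an observability constant that is \emph{uniform in $\varepsilon$}, by building a weight that is monotone along the backward flow and critical-point-free outside $\omega$. No such construction is carried out in the paper, and the Carleman estimate actually proved (Proposition \ref{P4}, with the standard Fursikov--Imanuilov weight $\eta(x)$ depending only on $x$) yields an observability constant of size $\exp\bigl(\tfrac{C_{1}}{\varepsilon}(1+\tfrac{1}{T})\bigr)$, which \emph{blows up} as $\varepsilon\to 0$. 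Symmetrically, you ask the dissipation piece to have an $O(1)$ exponent; in the paper it instead produces an exponentially \emph{small} prefactor $\exp(-mC_{0}/\varepsilon)$ by iterating the Agmon-based estimate (Proposition \ref{P3}) over $m$ consecutive windows of length $T_{0}$, each iteration using the flushing condition once. The flushing condition enters only here, not in the Carleman weight.

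The actual proof is the compensation of these two exponentials: one applies the Carleman-based inequality \eqref{io1} at time $t=mT_{0}\leq\kappa T$ to bound $\|\varphi(\cdot,mT_{0})\|_{L^{2}(\Omega)}^{2}$ by $\exp\bigl(\tfrac{C_{1}}{\varepsilon}(1+\tfrac1T)\bigr)\|\varphi\|_{L^{2}(\omega_{T})}^{2}$, and the iterated dissipation estimate \eqref{disspation 3} to bound $\|\varphi(\cdot,0)\|_{L^{2}(\Omega)}^{2}$ by $\exp(-mC_{0}/\varepsilon)\|\varphi(\cdot,mT_{0})\|_{L^{2}(\Omega)}^{2}$ plus the observation term; choosing $m=[C_{1}/C_{0}]+1$ and $\rho_{0}$ so that $mT_{0}\leq\kappa T$ and $C_{1}(1+1/T)-mC_{0}\leq 0$ makes the product of the two exponentials bounded. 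This is what pins down $\rho_{0}$, not a balance between two separately $\varepsilon$-uniform estimates. As written, your plan has no mechanism to produce either an $\varepsilon$-uniform Carleman constant or the needed $e^{-mC_{0}/\varepsilon}$ gain, so the two inequalities you would chain cannot both be established, and the argument does not close.
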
 
\par 
The proof of Theorem \ref{m1} will be in Appendix A.
Now, we present an example that illustrate the conditions of Theorem \ref{m1}.
\begin{eexample} In any dimension $d\geq 1$, we consider $\Omega=B(0,1)$, $\omega\subset \Omega$ an open subset contains $0$ and $f(x)=\frac{|x|^{2}}{2}$ for $x\in\Omega$ and has a compact support in $\mathbb{R}^{d}$ (to obtain $f\in W^{2,\infty}(\mathbb{R}^{d})$). Indeed, in this case $\n(x)=x$, then $\partial_{\n}f(x)=1$ for all $\left|x\right|=1$ and for any $x_{0}\in \overline{B}(0,1)$, the solution of the system:
		\begin{equation*} 
			\left\{
			\begin{aligned}
				\frac{d}{dt}\Psi(t,0,x_{0}) &=-\nabla f (\Psi(t,0,x_{0})) & & t\geq 0, \\
				\Psi(0,0,x_{0}) &=x_{0}
			\end{aligned}
			\right.
		\end{equation*}
  is given by $\Psi(t,0,x_{0})=e^{-t}x_{0},\; t\geq 0$. Hence
		$$\forall x_{0}\in \overline{B}(0,1),\;\;\exists t\in (-\infty,0),\;\; \Phi(t,0,x_{0})=\Psi(-t,0,x_{0})\in \omega.$$
Thus, condition \ref{cc3} of Theorem \ref{m1} is satisfied (see Remark \ref{Remark 1}).
\end{eexample} 
The literature investigates two methods: the spectral approach, illustrated in \cite{barcena2021cost}, and the Agmon inequalities-based approach, illustrated in \cite{guerrero2007singular}. Both employ Carleman and dissipation estimates. In this work, we will use the second approach because the transport term depends on the time variable.
\begin{remark}
The problem when $\mathfrak B$ is not
a gradient field is open. We now explain what we can ensure and what remains open:
\begin{itemize}
    \item A dissipation estimate is satisfied by Agmon inequality for a general transport $\mathfrak{B}(x,t)$ such that $\mathfrak{B}(x,t)\cdot\n(x)\geq 0$ and satisfies the flushing condition \eqref{Flushing Condition}. We prove those results in Subsection \ref{Dissipartion results}.
    \item The difficulty arises in the proof of Carleman estimate for the solutions of system \eqref{s2} which leads to observability constant of the form $\exp\left(\frac{C}{\varepsilon}\left(1+\frac{1}{T}\right)\right)$
for a constant $C>0$ independent
of $\varepsilon$ and $T$. Indeed, in the computations of Carleman estimate, we find the term \eqref{Tcar} in the right-hand side, which is difficult to absorb.
    \begin{eqnarray}
        \varepsilon \int_{\Gamma_{T}}(\mathfrak{B}(x,t)\cdot\n(x))\exp(-2s\alpha)|\nabla\varphi|^{2}\d \sigma\d t+2\int_{\Gamma_{T}}\exp(-2s\alpha)(\mathfrak{B}(x,t)\cdot\nabla\varphi)(\mathfrak{B}(x,t)\cdot\n(x))\varphi\mathrm{~d}\sigma\mathrm{~d}t. \label{Tcar}
    \end{eqnarray}
 This is the reason why we have imposed on $\mathfrak{B}(x,t)$ to be a gradient field $\nabla f(x,t)$, which allows us to transform system \eqref{s2} to system \eqref{S3} without a transport term. A Carleman estimate is proved for the solutions of the new system because it avoids terms of type \eqref{Tcar}.The assumption $\partial_{\n}f\geq c$ allows us to absorb a boundary term that remains in the right-hand side of the Carleman estimate.
    \item In the case of general transport with $\mathfrak{B}(x,t)\cdot \n(x)=0$, a Carleman estimate can be proved (the term \eqref{Tcar} is null), but the flushing condition \eqref{Flushing Condition} is never satisfied by $\omega\subset\Omega$, due to $\partial\Omega$ being a
periodic integral curve of $\mathfrak{B}$ that never reaches $\omega$. Under that hypothesis,
we can satisfy the flushing condition \eqref{Flushing Condition} for any trajectory that starts in $\Omega$ (but not in $\partial\Omega$), as shown in the following example: $d=2$, $\Omega=B((0,0),1)$, $\omega$ an open subset contains $(0,0)$ and $\mathfrak{B}(x,y)=(-x+y+x(x^{2}+y^{2}), -x-y+y(x^{2}+y^{2}))$. Using the Lyapunov function $V(x,y)=x^{2}+y^{2}$, we can show that $(0,0)$ is not globally asymptotically stable and its basin of attraction is $B((0,0),1)$, as shown in Figure \ref{Fig1}. In this case, the asymptotic behavior of the controllability cost is an open problem, even in the case of Dirichlet boundary conditions.
\begin{figure}[ht]
\begin{center} 
\includegraphics[width=1.8 in]{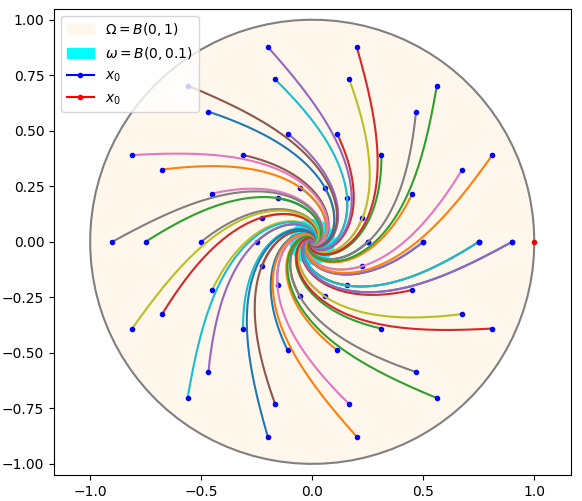} 
\end{center}
\caption{Backward trajectories $t\mapsto\Phi(t,0,x_{0})$ starting in $\Omega$ enter $\omega$ and those starting in $\Gamma$ remain in $\Gamma$.}
\label{Fig1}
\end{figure}
\end{itemize}
\end{remark}
The following remark concerns the minimal time for uniform controllability.
\begin{remark}
The minimal time needed to achieve uniform controllability, i.e.
\begin{eqnarray*}
    T_M:=\min\{T>0\;:\; \text{there is}\; C>0\;\text{such that}\; \mathcal{K}(\varepsilon, T, \Omega, \omega)\leq C,\;\text{for all}\;\varepsilon\;\text{small enough}\}
\end{eqnarray*}
has been studied and several upper and lower estimates have been obtained in the literature. Notably, they have been obtained in the one-dimensional case with constant transport coefficient and a boundary control in \cite{coron2005singular, glass2010complex, lissy2015explicit} with different
methods, and more recently in \cite{laurent2022uniform} for space-dependent transport in gradient form. It is important to remark that a formula for $T_M$ is not known even in the one dimensional heat equation with constant coefficients, which shows the toughness of the problem. 
In this paper, no estimate of the minimal time is provided, as in \cite{guerrero2007singular} for Dirichlet boundary conditions. Based on our analysis, we simply have $T_M\leq \rho_0 T_0$, where $T_0$ is the optimal time for the flushing condition and  the coefficient $\rho_0$ depends on the dissipation and observability constants. The challenge lies in the difficulty of explicitly identifying this coefficient.
One might conjecture that the minimal time for uniform controllability corresponds, up to a factor, to the optimal time required to reach the control region. However, this conjecture is far from obvious, as it would require thorough analysis to confirm, and it appears challenging to address.
\end{remark}
The second main result of this paper is to show that if there is a backward trajectory of $\mathfrak{B}$ that starts in $\Omega$ at time $T$ stays in $\Omega$ and does not enter the region $\overline{\omega}$ during time $[0,T]$, then for a small time control, the control cost explodes exponentially when the viscosity vanishes. To be more precise, we provide a proof of the following theorem in Appendix B.
\begin{theorem} \label{m2}
	We assume that:
	\begin{enumerate}[label=(\arabic*), ref=(\arabic*)]
		\item $\Omega\subset\mathbb{R}^{d}$ is a domain with Lipschitz boundary $\Gamma$, $d\geq 1$ and $\omega\subset\Omega$ is a nonempty open subset, 
		\item $\mathfrak{B}\in L^{\infty}(0,T; W^{1,\infty}(\Omega)^{d})$,
		\item \label{C3m2} $\exists x_{0}\in\Omega$ such that, for all $t\in [0,T]$, $\Phi(t,T,x_{0})\in \Omega\setminus\overline{\omega}$.
	\end{enumerate}
	Then there exists a constant $C>0$ independent of $\varepsilon$ such that we have the following estimate:
	\begin{equation} \label{cost2}
		\mathcal{K}(\varepsilon,T,\Omega,\omega)\geq\exp\left(\frac{C}{\varepsilon}\right),\quad\text{for}\;\varepsilon\;\text{small enough,}
	\end{equation}
	where $\mathcal{K}$ is the cost of the null controllability of \eqref{s1}.
\end{theorem}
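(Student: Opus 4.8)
The plan is to construct, for each small $\varepsilon$, an explicit family of initial data $\varphi_T$ for the adjoint system \eqref{s2} whose associated solution $\varphi$ is large at time $0$ but exponentially small on $\omega_T$, thereby forcing the observability constant $\mathcal{K}(\varepsilon,T,\Omega,\omega)$ to be at least $\exp(C/\varepsilon)$. The natural candidate is a Gaussian-type profile concentrated near the trajectory point: roughly $\varphi_T(x) = \varepsilon^{-d/2}\exp\!\bigl(-|x-x_0|^2/(2\varepsilon)\bigr)$ (suitably cut off to respect the boundary condition, or more precisely a WKB/Gaussian-beam ansatz built along the characteristic curve $t\mapsto\Phi(t,T,x_0)$). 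The key geometric input is hypothesis \ref{C3m2}: the backward characteristic through $(x_0,T)$ stays in the compact set $\Omega\setminus\overline{\omega}$ for all $t\in[0,T]$, so there is a fixed distance $\delta>0$ between this curve and $\overline{\omega}$.

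The key steps, in order, are: (i) rewrite \eqref{s2} in backward time $s=T-t$ so that it becomes a forward transport-diffusion problem, and transform away the zeroth-order and transport-form terms (using $\mathfrak{B}\in L^\infty(0,T;W^{1,\infty})$ to control the resulting potentials uniformly in $\varepsilon$); (ii) build a Gaussian beam concentrated along $\Phi(\cdot,T,x_0)$ — i.e., an approximate solution of the form $\varphi_{\mathrm{app}}(x,t)=\varepsilon^{-d/2}a(x,t)\exp\!\bigl(-\psi(x,t)/\varepsilon\bigr)$ where $\mathrm{Re}\,\psi\geq 0$ vanishes quadratically on the curve and $\psi,a$ solve the appropriate eikonal/transport equations so that the residual is $O(\varepsilon^{\infty})$ in the relevant norm; (iii) estimate the true solution $\varphi$ by $\varphi_{\mathrm{app}}$ plus an error controlled by an energy/duality argument (using that the heat semigroup with small viscosity and bounded lower-order terms propagates $L^2$ errors with at most polynomial-in-$\varepsilon^{-1}$ loss over the fixed time $T$); (iv) on one hand bound $\|\varphi(\cdot,0)\|_{L^2(\Omega)}$ from below by a constant (the Gaussian mass is $O(1)$), and on the other hand bound $\|\varphi\|_{L^2(\omega_T)}$ from above: since $\mathrm{Re}\,\psi(x,t)\geq c\,\mathrm{dist}(x,\Phi(t,T,x_0))^2 \geq c\delta^2$ for $x\in\omega$, the beam contributes $\exp(-c\delta^2/\varepsilon)$ there, and the error term is smaller still; (v) combine to get $\mathcal{K}(\varepsilon,T,\Omega,\omega)\geq \|\varphi(\cdot,0)\|_{L^2(\Omega)}/\|\varphi\|_{L^2(\omega_T)}\geq \exp(C/\varepsilon)$ for $\varepsilon$ small.

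The main obstacle — and the place where care is needed — is step (iii), the rigorous control of the difference between the true adjoint solution and the Gaussian-beam approximation on the whole cylinder $\Omega_T$, including the interaction of the beam with the boundary $\Gamma_T$ through the Neumann-type condition in \eqref{s2}. One must either show the beam can be chosen to stay uniformly away from $\Gamma$ (which is not guaranteed — the trajectory may approach $\Gamma$, though it stays in $\Omega$), or absorb the boundary contribution into the error using that $\mathrm{Re}\,\psi$ is bounded below on $\Gamma$ as well (by a possibly smaller constant), and that parabolic smoothing near a Lipschitz boundary still yields the needed $L^2$ error bounds. An alternative, more robust route that sidesteps the beam construction is to test the observability inequality against a well-chosen $\varphi_T$ and use a global Carleman-type weighted estimate with weight $\exp(-|x-x_0|^2/(C\varepsilon t))$ to directly propagate smallness from $\{t=T\}$ into $\omega_T$; either way, the quantitative unique-continuation-with-explicit-rate flavor of the argument is the crux, and the bounded lower-order terms must be shown to contribute only $\varepsilon$-independent (or mildly $\varepsilon$-dependent, hence absorbable) constants in the exponent.
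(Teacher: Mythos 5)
Your overall strategy (concentrate $\varphi_T$ near $x_0$, show the solution is exponentially small on $\omega_T$ because the backward trajectory tube stays a positive distance from $\overline{\omega}$, and bound $\|\varphi(\cdot,0)\|_{L^2(\Omega)}$ from below) is the right one and matches the paper's, but two of your steps have genuine gaps. First, the upper bound on $\|\varphi\|_{L^2(\omega_T)}$: with $\mathfrak{B}$ only in $L^{\infty}(0,T;W^{1,\infty})$ you cannot build a Gaussian beam with $O(\varepsilon^{\infty})$ residual (the higher transport equations for the amplitude require derivatives of $\mathfrak{B}$ that do not exist), and with any merely polynomially small residual your step (iv) fails: an $L^2$ error of size $\varepsilon^{N}$ is enormously larger than the $\exp(-c\delta^{2}/\varepsilon)$ contribution of the beam on $\omega$, so ``the error term is smaller still'' is not justified. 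What is actually needed is an \emph{exponentially weighted} energy estimate for the exact solution, and this is precisely what the paper does: it takes $\varphi_T\in D(B(x_0,r_0))$ and applies the Agmon-type inequality of Proposition \ref{propagmon} with the Lipschitz weight $\theta$ of Lemma \ref{lemma}, which solves $\partial_t\theta-|\nabla\theta|^2+\mathfrak{B}\cdot\nabla\theta\geq 0$, vanishes on the trajectory tube $\mathcal{D}_{r_0}(x_0,0,T)$ and is $\geq c_0r_0^2$ outside $\mathcal{D}_{2r_0}$; combined with cut-offs adapted to nested tubes and the energy estimate \eqref{dissw}, this yields $\|\varphi\|^2_{L^2(\omega_T)}\leq C_1\exp(-C_2/\varepsilon)\|\varphi_T\|^2_{L^2}$ for $T$ and $\varepsilon$ small, with no approximate solution ever constructed. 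Your ``alternative route'' via a weighted estimate is essentially this, but you leave it as a sketch and do not produce the weight.

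Second, and more decisively, your lower bound $\|\varphi(\cdot,0)\|_{L^2(\Omega)}\gtrsim 1$ rests on ``the Gaussian mass is $O(1)$,'' i.e.\ on the true solution at $t=0$ being close to the beam — which is exactly the error control you flag as unresolved. The paper sidesteps this entirely: because the boundary condition $(\varepsilon\nabla\varphi+\varphi\mathfrak{B})\cdot\mathbf{n}=0$ is conservative, testing the weak formulation against functions of $t$ alone shows $\int_{\Omega}\varphi(x,0)\,\mathrm{d}x=\int_{\Omega}\varphi_T(x)\,\mathrm{d}x$; choosing $\varphi_T$ with nonzero integral and applying Cauchy--Schwarz gives $\|\varphi(\cdot,0)\|_{L^2(\Omega)}^2\geq|\Omega|^{-1}\left|\int_{\Omega}\varphi_T\right|^2$ with no approximation argument at all. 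Without this (or some substitute), your proof does not close. A final minor caveat: the quantitative smallness obtained this way forces a restriction of the form $T<2c_0r_0^2/C$ (the Agmon growth $\exp(CT/\varepsilon)$ must be beaten by the weight gap $\exp(-2c_0r_0^2/\varepsilon)$), so the argument is genuinely a small-time one; your sketch does not track this competition of exponents.
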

Generally, condition \ref{C3m2} of Theorem \ref{m2} holds true, provided that $T$ is sufficiently small, using a continuity argument. For instance:
\begin{eexample} Assume that $\omega\subset\subset \Omega$ and $\mathfrak{B}\in L^{\infty}(\Omega\times (0,\infty))^{d}$.
	Let $x_{0}\in \Omega\setminus\overline{\omega}$, $r=\mbox{dist}(x_{0},\omega)$ and $b=\|\mathfrak{B}\|_{L^{\infty}(\Omega\times (0,\infty))}$. One has
	\begin{eqnarray*}
		\left|\Phi(t,T,x_{0})-x_{0}\right| &=&\left|\int_{T}^{t}\mathfrak{B}(\Phi(s,T,x_{0}),s)\d s\right|\leq Tb,\; \mbox{for all}\; t\in [0,T].
	\end{eqnarray*}
	Taking $T>0$ such that $0< Tb <r$, we obtain $\Phi(t,T,x_{0})\in B(x_{0},r)\subset \Omega\setminus\overline{\omega}$.
	\par 
	We can have this property for all $T>0$, as the following example shows: in 2-D, let $\mathfrak{B}(x,y)=(y,-x)$. In this case, the matrix associated with the equation \eqref{OD} is skew-symmetric. As a result, we have $\left|\Phi(t,T,x_{0})\right|=\left|x_{0}\right|$ for any $(x_{0},t)\in\mathbb{R}^{2}\times \mathbb{R}$, as shown in Figure \ref{Fig2}:
	\begin{figure}[ht] 
		\begin{center}
			\includegraphics[width=1.5 in]{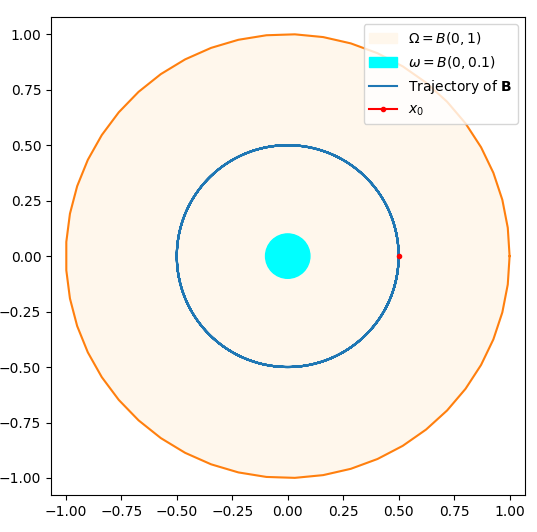}
		\end{center}
		\caption{A trajectory of $\mathfrak{B}$ in $\Omega\setminus\overline{\omega}$.} 
               \label{Fig2}
	\end{figure}
\end{eexample}
\begin{remark}
	Theorem \ref{m2} is a generalization of \cite[Theorem 2.8]{barcena2021cost} for a small time control. In fact, the conditions specified in \cite[Theorem 2.8]{barcena2021cost} lead directly to the realization of condition \ref{C3m2} of Theorem \ref{m2} for $T<h$ and $x_{0}\in (p_{l}+T,p_{l}+h)$.
\end{remark} 
\par 
In the context of the problem under study, Sergio Guerrero and Gilles Lebeau established analog results in their work, specifically in \cite[Theorem 1 and Theorem 3]{guerrero2007singular}, with a transport flow belonging to $W^{1,\infty}(\mathbb{R}^{d}\times (0,+\infty))$ and employing Dirichlet conditions. Jon Asier B\'arcena-Petisco, in \cite[Theorem 2.7 and Theorem 2.8]{barcena2021cost}, demonstrated the same results for the case of the first vector of the canonical basis of $\mathbb{R}^{d}$ as a velocity and autonomous Robin (or Fourier) conditions. The generalization of these results to a velocity field expressed as a gradient field belonging to $W^{1,\infty}(\Omega)^{d}$ was accomplished by \cite{et2024asymptotic}, extending the findings of \cite[Theorem 2.7 and Theorem 2.8]{barcena2021cost}. Additionally, in \cite{laurent2021uniform} Camille Laurent and Matthieu Léautaud investigated uniform controllability and the corresponding optimal time for homogeneous Dirichlet conditions on a smooth, connected, compact manifold, while in \cite{laurent2022uniform} they considered the analogous scenario for 1-D systems.
\par 
Our contribution is to answer the open questions presented in \cite[Remark 3]{guerrero2007singular} and \cite{barcena2021cost}. The objective then is to study the null controllability cost of a transport-diffusion equation with Neumann conditions and velocity $\mathfrak{B}(x,t)$ in the form of a gradient field also depends on the time variable in the case of uniform controllability.
The main difficulty encountered initially was to establish an Agmon inequality. However, this obstacle has been overcome with the help of estimate \eqref{Trace estimate} while still using the tools presented in \cite{guerrero2007singular}, which allowed for the problem's resolution. Subsequently, the second challenge was to establish a new Carleman estimate, yielding an observability constant of the form $e^{C/\varepsilon}$. 
Furthermore, it should be noted that the vector $\mathfrak{B}$ depends on both $x$ and $t$, which prevents the use of the spectral approach, as is possible in cases \cite{barcena2021cost,et2024asymptotic}. In the proof of Theorem \ref{m1}, we use a new decomposition (see the system introduced in Section \ref{Section 3}) of the adjoint system \eqref{s2} to prove estimates within the control region and outside, using the uniform Agmon inequality in $\varepsilon$ and the Carleman estimate.  In the proof of Theorem \ref{m2}, we construct a solution of the adjoint system \eqref{s2}, considering smooth initial data and exploiting Agmon inequality.
\par 
From a historical standpoint, the problem under investigation has its roots in the field of controllability problems within singular limits, which were initially introduced by Jacques-Louis Lions. The articles \cite{lopez2000null} of Antonio L\'opez, Xu Zhang, and Enrique Zuazua and \cite{phung2002null} of Kim Dang Phung provide an illustration of how the null controllability of the heat equation emerges as a singular limit from the exact controllability of dissipative wave equations. Subsequently, the specific focus of the study revolves around the evanescent viscosity limit, which was first introduced by Jean-Michel Coron and Sergio Guerrero in their work \cite{coron2005singular}. Initially explored in the context of 1-D transport equations, this problem was subsequently extended to higher dimensions by Sergio Guerrero and Gilles Lebeau in their work cited as \cite{guerrero2007singular}. To explore other references related to optimal time of the controllability with
vanishing viscosity limit of the heat equation see \cite{glass2010complex} of Olivier Glass and \cite{lissy2012link,lissy2014application,lissy2015explicit} of Pierre Lissy and the reference therein.  For the motivation behind the problem and other applications, we invite you to refer to the introductions of the mentioned works \cite{laurent2021uniform,barcena2021cost,et2024asymptotic}. The last interesting paper involving parabolic equations, specifically the fourth order equation, is \cite{carreno2016cost} of Nicolas Carreño and Patricio Gúzman.
\par 
Our paper is organized as follows. In Section \ref{Section 2}, we present several results related to property \eqref{Flushing Condition}. Moving on to Section \ref{Section 3}, we examine the existence and uniqueness of both strong and weak solutions for a parabolic system that includes the adjoint system \eqref{s2}. In Section \ref{Section 4}, we will prove some new Agmon inequalities and significant dissipation results and we present a new Carleman estimate for the adjoint system \eqref{s2} which will be shown in Appendix C while Appendices A and B are devoted to the proof of our main results, Theorem \ref{m1} and Theorem \ref{m2}. In the last section, we present a conclusion to this work.
\section{Some preliminary results of flushing condition} \label{Section 2}
In this section, we present two relevant results concerning property \eqref{Flushing Condition}. The first result offers a characterization of this property specifically in the autonomous case, while the second result introduces a refinement of the regions associated with this property, which will be used later in our analyses.\\
\par 
The following lemma ensures the existence and uniqueness of differentiable solutions of the ordinary differential equation \eqref{OD}.
\begin{lemma}  Let $\mathfrak{B}\in L^{\infty}(0,T;W^{1,\infty}(\mathbb{R}^{d})^{d})$.
	For all $(x_{0},t_{0})\in \mathbb{R}^{d}\times [0,T]$, the ordinary differential equation \eqref{OD} admits a unique global differentiable solution $\Phi$. Moreover for all $t\in [0,T]$ and all $(x_{0},t_{0}), (y_{0},s_{0})\in\mathbb{R}^{d}\times [0,T]$
	\begin{eqnarray}
		&&|\Phi(t,t_{0},x_{0})-\Phi(t,s_{0},y_{0})|\leq \exp\left(\|\nabla\mathfrak{B}\|_{L^{\infty}(\mathbb{R}^{d}\times (0,T))}T\right)\left(\|\mathfrak{B}\|_{L^{\infty}(\mathbb{R}^{d}\times (0,T))}|t_{0}-s_{0}|+|x_{0}-y_{0}|\right). \label{f1}
	\end{eqnarray} 
\end{lemma}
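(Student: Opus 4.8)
The plan is to treat this as a standard application of the Picard--Lindelöf (Cauchy--Lipschitz) theorem together with Grönwall's inequality, adapted to the slightly weak time-regularity ($L^\infty$ in $t$, Lipschitz in $x$). First I would note that the hypothesis $\mathfrak{B}\in L^\infty(0,T;W^{1,\infty}(\mathbb{R}^d)^d)$ means that for a.e.\ $t$ the map $x\mapsto\mathfrak{B}(x,t)$ is globally Lipschitz with constant $\|\nabla\mathfrak{B}\|_{L^\infty(\mathbb{R}^d\times(0,T))}$, uniformly in $t$, and $t\mapsto\mathfrak{B}(x,t)$ is measurable and bounded. This is exactly the Carathéodory setting: the integral equation $\Phi(t,t_0,x_0)=x_0+\int_{t_0}^t\mathfrak{B}(\Phi(s,t_0,x_0),s)\,\mathrm{d}s$ has a unique absolutely continuous solution on a maximal interval, and because $\mathfrak{B}$ is globally bounded the solution cannot blow up in finite time, hence extends to all of $[0,T]$. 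Since $\mathfrak{B}$ is actually continuous in $x$ and the right-hand side $s\mapsto\mathfrak{B}(\Phi(s),s)$ is bounded, $\Phi$ is Lipschitz in $t$, hence differentiable a.e.; if one wants genuine differentiability one invokes continuity of $t\mapsto\mathfrak{B}$ (or simply states the ODE holds for a.e.\ $t$, which suffices for all later uses).

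Next I would establish the quantitative estimate \eqref{f1}. Fix $t\in[0,T]$ and two data $(x_0,t_0)$, $(y_0,s_0)$. Without loss of generality assume $s_0\le t_0$. Write, for $\tau$ between $s_0$ and $t$,
\begin{equation*}
\Phi(\tau,t_0,x_0)-\Phi(\tau,s_0,y_0)=\big(x_0-y_0\big)+\int_{t_0}^{s_0}\mathfrak{B}(\Phi(\sigma,s_0,y_0),\sigma)\,\mathrm{d}\sigma+\int_{t_0}^{\tau}\big[\mathfrak{B}(\Phi(\sigma,t_0,x_0),\sigma)-\mathfrak{B}(\Phi(\sigma,s_0,y_0),\sigma)\big]\,\mathrm{d}\sigma,
\end{equation*}
where I have used the integral form of \eqref{OD} with base point $t_0$ for the first trajectory and base point $s_0$ for the second (valid since $\Phi(t_0,t_0,x_0)=x_0$ and $\Phi(s_0,s_0,y_0)=y_0$). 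The middle term is bounded by $\|\mathfrak{B}\|_{L^\infty(\mathbb{R}^d\times(0,T))}|t_0-s_0|$, and the last term is bounded by $\|\nabla\mathfrak{B}\|_{L^\infty(\mathbb{R}^d\times(0,T))}\int_{t_0}^{\tau}|\Phi(\sigma,t_0,x_0)-\Phi(\sigma,s_0,y_0)|\,\mathrm{d}\sigma$ using the uniform Lipschitz bound on $\mathfrak{B}(\cdot,\sigma)$. Setting $g(\tau):=|\Phi(\tau,t_0,x_0)-\Phi(\tau,s_0,y_0)|$ gives $g(\tau)\le \big(|x_0-y_0|+\|\mathfrak{B}\|_{L^\infty}|t_0-s_0|\big)+\|\nabla\mathfrak{B}\|_{L^\infty}\big|\int_{t_0}^{\tau}g(\sigma)\,\mathrm{d}\sigma\big|$, and Grönwall's lemma then yields $g(t)\le \big(|x_0-y_0|+\|\mathfrak{B}\|_{L^\infty}|t_0-s_0|\big)\exp\big(\|\nabla\mathfrak{B}\|_{L^\infty}|t-t_0|\big)\le\big(\|\mathfrak{B}\|_{L^\infty}|t_0-s_0|+|x_0-y_0|\big)\exp\big(\|\nabla\mathfrak{B}\|_{L^\infty}T\big)$, which is exactly \eqref{f1}.

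The only genuinely delicate point is bookkeeping at the base times: one must be careful that the two trajectories are compared starting from their respective initial times $t_0$ and $s_0$, which is why the extra ``drift'' term $\int_{t_0}^{s_0}\mathfrak{B}\,\mathrm{d}\sigma$ appears and produces the $|t_0-s_0|$ contribution; everything else is the textbook Grönwall argument. A cosmetically cleaner alternative is to first treat the case $t_0=s_0$ (pure dependence on initial position, giving the factor $e^{\|\nabla\mathfrak{B}\|_\infty T}|x_0-y_0|$) and then handle the shift in initial time by the semigroup-type identity $\Phi(\cdot,s_0,y_0)=\Phi(\cdot,t_0,\Phi(t_0,s_0,y_0))$ together with the bound $|\Phi(t_0,s_0,y_0)-y_0|\le\|\mathfrak{B}\|_\infty|t_0-s_0|$; combining the two gives \eqref{f1}. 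I expect no real obstacle here — the proof is routine ODE theory — so the main ``work'' is simply presenting the Carathéodory existence/uniqueness and the Grönwall estimate carefully given the weak $L^\infty$-in-time regularity.
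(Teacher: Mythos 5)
Your proposal is correct and follows essentially the same route as the paper: the integral formulation of \eqref{OD}, the uniform Lipschitz bound $|\mathfrak{B}(x,t)-\mathfrak{B}(y,t)|\leq\|\nabla\mathfrak{B}\|_{L^{\infty}(\mathbb{R}^{d}\times(0,T))}|x-y|$, a decomposition isolating the drift term $\int_{t_{0}}^{s_{0}}\mathfrak{B}\,\mathrm{d}\sigma$ that yields the $\|\mathfrak{B}\|_{L^{\infty}}|t_{0}-s_{0}|$ contribution, and Grönwall's lemma. Your explicit appeal to the Carathéodory framework is in fact slightly more careful than the paper's bare invocation of Cauchy--Lipschitz, given the merely $L^{\infty}$ time regularity of $\mathfrak{B}$.
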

\begin{proof}
	For all $t\in (0,T)$, $\mathfrak{B}(\cdot,t)\in W^{1,\infty}(\mathbb{R}^{d})^{d}$ and $\mathbb{R}^{d}$ (is convex),  from \cite{brezis1983analyse} for scalar-valued functions, we deduce that
	\begin{eqnarray}
		|\mathfrak{B}(x,t)-\mathfrak{B}(y,t)|
		&\leqslant & \|\nabla\mathfrak{B}(\cdot,t)\|_{L^{\infty}(\mathbb{R}^{d})}|x-y| \nonumber\\
		&\leqslant & \|\nabla\mathfrak{B}\|_{L^{\infty}(\mathbb{R}^{d}\times (0,T))}|x-y|.\label{odf1}
	\end{eqnarray}
	where $\mathfrak{B}:=(\mathfrak{B}_{1},\cdots,\mathfrak{B}_{d})$ and $\|\nabla\mathfrak{B}(\cdot,t)\|^{2}_{L^{\infty}(\mathbb{R}^{d})}:=\displaystyle\sum_{1\leq i,j\leq d}\|\partial_{x_j}\mathfrak{B}_{i}(\cdot,t)\|^{2}_{L^{\infty}(\mathbb{R}^{d})}$.
	The Cauchy-Lipschitz theorem affirms that for all $(x_{0},t_{0})\in \mathbb{R}^{d}\times [0,T]$, \eqref{OD} has a unique global solution given by 
	\begin{eqnarray}
		\Phi(t,t_{0},x_{0})=x_{0}+\int_{t_{0}}^{t}\mathfrak{B}(\Phi(s,t_{0},x_{0}),s)\mathrm{~d} s,\quad 0\leqslant t\leqslant T. \label{odf2}
	\end{eqnarray}
	Let $(x_{0},t_{0}), (y_{0},s_{0})\in \mathbb{R}^{d}\times [0,T]$ with $t_{0}\leqslant s_{0}$, from \eqref{odf1} and \eqref{odf2}, we obtain
	\begin{eqnarray}
		&&\left|\Phi(t,t_{0},x_{0})-\Phi(t,s_{0},y_{0})\right| \leqslant |x_{0}-y_{0}|+ \|\mathfrak{B}\|_{L^{\infty}(\mathbb{R}^{d}\times (0,T))}|s_{0}-t_{0}| + \|\nabla\mathfrak{B}\|_{L^{\infty}(\mathbb{R}^{d}\times (0,T))}\int_{\min(s_{0},t)}^{\max(s_{0},t)}\left|\Phi(s,t_{0},x_{0})-\Phi(s,s_{0},y_{0})\right|\mathrm{~d} s.\nonumber
	\end{eqnarray}
	Applying Grönwall's lemma to this last inequality, we obtain \eqref{f1}. 
\end{proof}
\par 
In the autonomous case, we can characterize condition \eqref{Flushing Condition} as follows.
\begin{proposition} \label{P5}
	Let $\mathcal{O}$ a nonempty open set of $\mathbb{R}^{d}$ and $\mathfrak{B}\in W^{1,\infty}(\mathbb{R}^{d})^{d}$.\\
    Assume that 
	$$\forall x_{0}\in\overline{\Omega},\exists\, t\in (-\infty,0),\; \Phi(t,0,x_{0})\in\mathcal{O}.$$
	Then there exist $T_{0}>0$ and $r_{0}>0$ such that, for all $T>T_{0}$, $(T,T_{0},r_{0},\mathfrak{B},\Omega)$ satisfies condition \eqref{Flushing Condition} for $\mathcal{O}$.
\end{proposition}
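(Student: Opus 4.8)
The plan is to upgrade the pointwise-in-time hypothesis (every backward trajectory from $\overline{\Omega}$ eventually enters $\mathcal{O}$) into the quantitative, uniform statement \eqref{Flushing Condition}, using a compactness argument together with the Lipschitz-continuous dependence \eqref{f1} of $\Phi$ on its data. The autonomy of $\mathfrak{B}$ is exploited through the group property $\Phi(t,t_{0},x_{0})=\Phi(t-t_{0},0,x_{0})$, so that the flushing time from a point does not depend on the starting time $t_{0}$; this is precisely what lets a single $T_{0}$ work for all $t_{0}\in[T_{0},T]$.

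First I would fix, for each $x_{0}\in\overline{\Omega}$, a time $\tau(x_{0})<0$ with $\Phi(\tau(x_{0}),0,x_{0})\in\mathcal{O}$; since $\mathcal{O}$ is open and $y\mapsto\Phi(\tau(x_{0}),0,y)$ is continuous, there is an open ball $B(x_{0},\delta(x_{0}))$ such that $\Phi(\tau(x_{0}),0,y)\in\mathcal{O}$ for all $y$ in that ball. More is true: by \eqref{f1} applied with $s_{0}=0$, $t_{0}=0$ (autonomous case) one even gets that $\Phi(\tau(x_{0}),0,\cdot)$ maps a whole neighborhood into $\mathcal{O}$ with an explicit radius controlled by $\exp(\|\nabla\mathfrak{B}\|_{\infty}|\tau(x_{0})|)$ times the distance of $\Phi(\tau(x_{0}),0,x_{0})$ to $\partial\mathcal{O}$. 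Then $\{B(x_{0},\delta(x_{0}))\}_{x_{0}\in\overline{\Omega}}$ is an open cover of the compact set $\overline{\Omega}$; extract a finite subcover indexed by $x_{1},\dots,x_{N}$, set $T_{0}:=1+\max_{i}|\tau(x_{i})|$ and let $r_{0}>0$ be a Lebesgue number for this finite cover (so any ball of radius $r_{0}$ centered in $\overline{\Omega}$ lies inside some $B(x_{i},\delta(x_{i}))$).

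Next I would verify \eqref{Flushing Condition} for these constants. Fix $x_{0}\in\overline{\Omega}$ and $t_{0}\in[T_{0},T]$. By the Lebesgue-number property, $\overline{B}(x_{0},r_{0})\subset B(x_{i},\delta(x_{i}))$ for some $i$; set $t:=t_{0}+\tau(x_{i})$, which lies in $(t_{0}-T_{0},t_{0})$ because $0<-\tau(x_{i})<T_{0}$. For any $x\in\overline{B}(x_{0},r_{0})$ I need $\Phi(t,t_{0},x)\in\mathcal{O}$; but by autonomy $\Phi(t,t_{0},x)=\Phi(t-t_{0},0,x)=\Phi(\tau(x_{i}),0,x)$, and since $x\in B(x_{i},\delta(x_{i}))$ this lies in $\mathcal{O}$ by the choice of $\delta(x_{i})$. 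One small check: Definition \ref{definition} as written quantifies ``$\forall x\in\overline{B}(x_0,r_0)$'' but writes $\varPhi(t,t_0,x_0)$; reading it as $\varPhi(t,t_0,x)$ (the natural robust form, consistent with the intended use), the argument above closes it. This works for every $T>T_{0}$ since the construction never used $T$ beyond requiring $t_{0}\le T$.

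The main obstacle is not any single estimate but making sure the constants $T_{0}$ and $r_{0}$ are chosen in the right order and are genuinely uniform: $r_{0}$ must be extracted \emph{after} the finite subcover (a Lebesgue number), and $T_{0}$ must dominate all the finitely many flushing times $|\tau(x_{i})|$ at once — both steps rely essentially on the compactness of $\overline{\Omega}$ and would fail for an unbounded domain. A secondary technical point is the continuity/Lipschitz dependence of $\Phi(\tau,0,\cdot)$ used to pass from ``$x_{0}$ enters $\mathcal{O}$'' to ``a neighborhood of $x_{0}$ enters $\mathcal{O}$''; this is supplied cleanly by \eqref{f1}. I expect the write-up to be short once these two choices are set up correctly.
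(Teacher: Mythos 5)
Your proof is correct and follows essentially the same route as the paper's: pick a flushing time and a stability neighborhood for each point, use compactness of $\overline{\Omega}$ to extract a finite subcover, and exploit autonomy via $\Phi(t,t_{0},x)=\Phi(t-t_{0},0,x)$ to make one $T_{0}$ work for all $t_{0}\in[T_{0},T]$. The only (cosmetic) difference is that you invoke a Lebesgue number where the paper uses the doubled-radius trick ($2r(x_{0})$ in the stability neighborhood, $r_{0}=\min_{i}r(x_{0}^{i})$), and your choice $T_{0}=1+\max_{i}|\tau(x_{i})|$ is the correct direction of the inequality.
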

\begin{proof}
	For all $x_{0}\in\overline{\Omega}$ there exists $t:=t(x_{0})\in (-\infty,0)$ such that, $ \Phi(t(x_{0}),0,x_{0})\in\mathcal{O}.$ By the continuity of the flow, there exists $r:=r(x_{0})>0$ such that,
	\begin{eqnarray}
		|x-x_{0}|\leq 2r(x_{0})\Longrightarrow \Phi(t(x_{0}),0,x)\in\mathcal{O}. \label{od4}
	\end{eqnarray}
	By compactness of $\overline{\Omega}$, there exist $x^{1}_{0},\cdots,x^{I}_{0}\in \overline{\Omega}$ such that, $\overline{\Omega}\subset\displaystyle\bigcup_{i=1,\cdots, I}B(x^{i}_{0},r(x^{i}_{0}))$. We put $0<T_{0}<-\displaystyle\min_{i=1,\cdots, I}t(x^{i}_{0})$ and $r_{0}:= \displaystyle\min_{i=1,\cdots, I}r(x^{i}_{0})$. From \eqref{od4}, we obtain
	\begin{eqnarray}
		\forall x_{0}\in\overline{\Omega},\exists\, t\in (-T_{0},0),\;|x-x_{0}|\leq r_{0}\Longrightarrow \Phi(t,0,x)\in\mathcal{O}. \label{od5}
	\end{eqnarray}
	Let $T>T_{0}$, $x_{0}\in\overline{\Omega}$ and $t_{0}\in [T_{0},T]$. Since $\mathfrak{B}$ is independent of $t$, then
	\begin{eqnarray}
		\Phi(t,t_{0},x)=\Phi(t-t_{0},0,x),\quad  (x,t)\in \mathbb{R}^{d}\times\mathbb{R}. \label{od6}
	\end{eqnarray}
	From \eqref{od5} and \eqref{od6}, we conclude that $(T,T_{0},r_{0},\mathfrak{B},\Omega)$ satisfies condition \eqref{Flushing Condition} for $\mathcal{O}$. 
\end{proof}
The following proposition guarantees that condition \eqref{Flushing Condition} remains true for small regions.
\begin{proposition} \label{P1}
	Let $\mathcal{O}\subset\mathbb{R}^{d}$ an open nonempty with bounded boundary and assume that $(T,T_{0},r_{0},\mathfrak{B},\Omega)$ satisfies condition \eqref{Flushing Condition} for $\mathcal{O}$. Then there exists $\mathcal{O}_{0}\subset \subset \mathcal{O}$ an open such that $\left(T,T_{0},\frac{r_{0}}{2},\mathfrak{B},\Omega\right)$ satisfies condition \eqref{Flushing Condition} for $\mathcal{O}_{0}$.
\end{proposition}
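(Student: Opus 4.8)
The plan is to exploit a compactness argument over $\overline{\Omega} \times [T_0, T]$, mirroring the structure of the proof of Proposition \ref{P5}, but now shrinking the target set rather than the tube radius. For each $(x_0, t_0) \in \overline{\Omega} \times [T_0, T]$, condition \eqref{Flushing Condition} gives a time $t = t(x_0, t_0) \in (t_0 - T_0, t_0)$ with $\varPhi(t, t_0, x) \in \mathcal{O}$ for all $x \in \overline{B}(x_0, r_0)$. Since $\overline{B}(x_0, r_0)$ is compact and $\mathcal{O}$ is open, the set $\{\varPhi(t(x_0,t_0), t_0, x) : x \in \overline{B}(x_0, r_0)\}$ is a compact subset of $\mathcal{O}$, hence it stays at a positive distance $\delta(x_0, t_0) > 0$ from $\partial \mathcal{O}$ (here the assumption that $\partial\mathcal{O}$ is bounded — equivalently $\mathbb{R}^d \setminus \mathcal{O}$ has bounded boundary — is what lets us speak of this distance cleanly and keep everything inside a fixed ball).

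Next I would invoke the continuous dependence estimate \eqref{f1}: the flow map $(s_0, y_0) \mapsto \varPhi(t, s_0, y_0)$ is Lipschitz, uniformly on $[0,T]$. So there is a radius $\eta(x_0, t_0) > 0$ such that whenever $|t_0 - t_0'| < \eta$ and $|x_0 - x_0'| < \eta$, one has, for every $x$ in the slightly enlarged ball, $|\varPhi(t(x_0,t_0), t_0', x) - \varPhi(t(x_0,t_0), t_0, x')| < \delta(x_0,t_0)/2$ for the corresponding nearby point $x'$; more precisely, for $x' \in \overline{B}(x_0', \tfrac{r_0}{2})$ we can find $x \in \overline{B}(x_0, r_0)$ with $|x - x'| < \eta$ by making $\eta \le r_0/2$, so that $\varPhi(t(x_0,t_0), t_0', x')$ lands within $\delta(x_0,t_0)/2$ of a point of the compact set above, hence still inside $\mathcal{O}$. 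Thus the flushing property with radius $r_0/2$ and target $\mathcal{O}$ holds on a whole neighborhood $B((x_0,t_0), \eta(x_0,t_0))$ of $(x_0, t_0)$ in $\overline{\Omega} \times [T_0, T]$, and moreover the relevant flowed points all lie in the fixed compact set $\mathcal{K}_{x_0,t_0} := \{z : \mathrm{dist}(z, \text{compact set}) \le \delta/2\} \subset \mathcal{O}$.

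Then I would cover the compact set $\overline{\Omega} \times [T_0, T]$ by finitely many such neighborhoods $B((x_0^i, t_0^i), \eta_i)$, $i = 1, \dots, N$, and set $\mathcal{O}_0$ to be an open set with $\bigcup_{i=1}^N \mathcal{K}_{x_0^i, t_0^i} \subset \mathcal{O}_0 \subset\subset \mathcal{O}$ — for instance the interior of $\bigcup_i \mathcal{K}_{x_0^i,t_0^i}$ fattened by a tiny amount, or simply an open neighborhood of this finite union of compacts whose closure avoids $\partial\mathcal{O}$, which exists since the union is compact and contained in the open set $\mathcal{O}$. By construction, for any $(x_0, t_0) \in \overline{\Omega} \times [T_0, T]$ we pick the index $i$ with $(x_0, t_0)$ in the $i$-th neighborhood, use $t = t(x_0^i, t_0^i) \in (t_0 - T_0, t_0)$ (note $t_0^i - T_0$ and $t_0 - T_0$ are close, so one must check $t > t_0 - T_0$ still holds; this is arranged by choosing $\eta_i$ small enough relative to the gap $t(x_0^i,t_0^i) - (t_0^i - T_0) > 0$), and conclude $\varPhi(t, t_0, x) \in \mathcal{O}_0$ for all $x \in \overline{B}(x_0, \tfrac{r_0}{2})$.

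The main obstacle, and the only genuinely delicate point, is the bookkeeping around the time shift: when we perturb $t_0$ to $t_0'$ we must guarantee both that the chosen flow time $t$ still lies in the open interval $(t_0' - T_0, t_0')$ and that the flowed points remain inside $\mathcal{O}$ (and in fact inside the fixed shrunken set $\mathcal{O}_0$). Both are handled by shrinking the neighborhood radii $\eta_i$ — the first requires $\eta_i$ smaller than the positive distance of $t(x_0^i,t_0^i)$ to the endpoints $t_0^i - T_0$ and $t_0^i$, the second is exactly the continuity-of-flow estimate \eqref{f1} combined with the positive distance $\delta_i$ to $\partial\mathcal{O}$ — and none of these finitely many constraints conflict. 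I do not expect to need the $\mathcal{C}^2$ regularity of $\Omega$ or the gradient structure of $\mathfrak{B}$ here; only $\mathfrak{B} \in L^\infty(0,T; W^{1,\infty}(\mathbb{R}^d)^d)$ (so that \eqref{f1} applies) and the stated hypothesis on $\partial\mathcal{O}$ are used.
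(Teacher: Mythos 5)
Your proof is correct and follows essentially the same route as the paper: both arguments stabilize the flushing time $t(x_0,t_0)$ under small perturbations of $(x_0,t_0)$ using the positive distance of the flowed compact image to $\partial\mathcal{O}$ together with the flow-continuity estimate \eqref{f1}, then extract a finite cover of the compact set $\overline{\Omega}\times[T_0,T]$ and take $\mathcal{O}_0$ to be a suitable shrinking of $\mathcal{O}$ containing the finitely many fattened images. The only (immaterial) difference is that the paper defines $\mathcal{O}_0$ via a uniform distance-to-boundary threshold $d_0/2$ rather than as a neighborhood of the union of the compacts $\mathcal{K}_{x_0^i,t_0^i}$, and your version is if anything a little more careful about the time-endpoint bookkeeping.
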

\begin{proof}
	For all $(x_{0},t_{0})\in \overline{\Omega}\times [T_{0},T]$, there exists $t:=t(x_{0},t_{0})\in (t_{0}-T_{0},t_{0})$ such that, for all $x\in \overline{B}(x_{0},r_{0})$ we have $\Phi(t(x_{0},t_{0}),t_{0},x)\in\mathcal{O}$. We set
	\begin{eqnarray}
		d(x_{0},t_{0}):=\text{dist}\left(\left\{\Phi(t(x_{0},t_{0}),t_{0},x):\;x\in \overline{B}\left(x_{0},\frac{r_{0}}{2}\right)\right\},\partial\mathcal{O}\right)>0, \label{od7}
	\end{eqnarray}
	since $\left\{\Phi(t(x_{0},t_{0}),t_{0},x):\;x\in \overline{B}\left(x_{0},\frac{r_{0}}{2}\right)\right\}$ is a closed thanks to \eqref{f1} and $\partial\mathcal{O}$ is a compact.  The continuity of the flow in \eqref{f1}, asserts that it exists $r:=r(x_{0},t_{0})>0$ such that, for all $|s-t_{0}|<r(x_{0},t_{0})$, we have
	\begin{eqnarray}
		\forall x\in \overline{B}(x_{0},r_{0}),\; |\Phi(t(x_{0},t_{0}),s,x)-\Phi(t(x_{0},t_{0}),t_{0},x)|\leq \frac{d(x_{0},t_{0})}{2}. \label{od8}
	\end{eqnarray}
	We then consider $\mathcal{U}_{x_{0},t_{0}}$ the set of couples $(x,t)$ satisfying 
	$$x\in B\left(x_{0},\frac{r_{0}}{2}\right),\quad t-T_{0}<t(x_{0},t_{0})<t\quad \text{and}\quad t\in (t_{0}-r(x_{0},t_{0}),t_{0}+r(x_{0},t_{0})).$$
	Since $\mathcal{U}_{x_{0},t_{0}}$ is on open containing $(x_{0},t_{0})$ and $\overline{\Omega}\times [T_{0},T]$ is compact, then it admits a finite covering by $\mathcal{U}_{x^{i}_{0},t^{i}_{0}}$, $i=1,\cdots,I$. Taking $d_{0}=\displaystyle\min_{i=1,\cdots,I}d(x^{i}_{0},t^{i}_{0})$ and using \eqref{od7} and \eqref{od8}, we can show that  $\left(T,T_{0},\frac{r_{0}}{2},\mathfrak{B}, \Omega\right)$ satisfies condition \eqref{Flushing Condition} for all open $\mathcal{O}_{0}$ such that $$\left\lbrace x\in\mathcal{O}:\;\text{dist}(x,\partial\mathcal{O})\geq \frac{d_{0}}{2}\right\rbrace \subset \mathcal{O}_{0}.$$ 
\end{proof}
\section{Wellposedness and results of a parabolic equation including adjoint system} \label{Section 3}
In this section, we will establish the well-posedness and regularity properties of solutions for the following backward, inhomogeneous linear transport-diffusion equation, accompanied by mixed boundary conditions Dirichlet and non-autonomous Robin conditions:
\begin{equation}  \label{s3}
	\left\{
	\begin{aligned}
		\partial_t \varphi+\varepsilon\Delta \varphi+\nabla \cdot\left(\varphi \mathfrak{B}(x,t)\right) &=F(x,t) & & \text { in } \mathcal{U}\times (t_{1},t_{2}), \\
		\left(\varepsilon \nabla\varphi +\varphi \mathfrak{B}(x,t)\right)\cdot \mathbf{n}(x)\mathds{1}_{\Gamma}(x)+\varphi\mathds{1}_{\Gamma_{0}}(x) &=0 & & \text { on } \partial\mathcal{U}\times (t_{1},t_{2}), \\
		\varphi(x,t_{2}) &=G(x) & & \text { in } \mathcal{U},\\
	\end{aligned} 
	\right.
\end{equation}
where $0\leqslant t_{1}<t_{2}\leqslant T$, and $\Omega_{0}\subset\subset\Omega$ a regular open, $\mathcal{U}:=\Omega\setminus\overline{\Omega_{0}}$, $\Gamma=\partial\Omega$, $\Gamma_{0}:=\partial\Omega_{0}$, $F\in L^{2}(\mathcal{U}\times (t_{1},t_{2}))$ and $G\in L^{2}(\mathcal{U})$. The figure \ref{Fig3} illustrates the geometric domains.\\
\begin{figure}[ht] 
		\begin{center}
			\includegraphics[width=1.4 in]{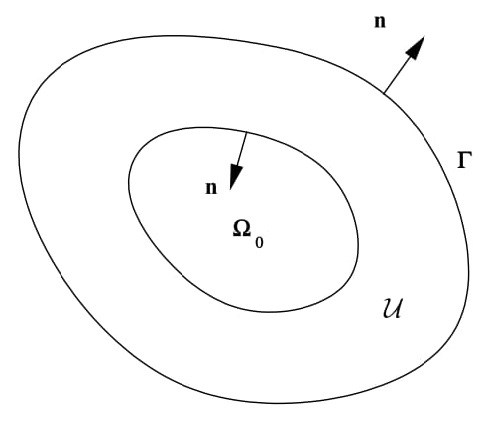}
		\end{center}
		\caption{Representation of $\mathcal{U}$.} \label{Fig3}
 \end{figure}
\textbf{Notation.}
In the following, $\mathcal{S}(\Omega_{0},t_{1},t_{2},F,G,\varepsilon,\mathfrak{B})$ will refer to system \eqref{s3}.
Note that in case $\Omega_{0}=\varnothing$, system $\mathcal{S}(\varnothing,0,T,0,\varphi_{T},\varepsilon,\mathfrak{B})$ is the adjoint system \eqref{s2}.
\subsection{Notations and function spaces}
Let $\Omega\subset\mathbb{R}^{d}$, $d\geq 1$ a domain with Lipschitz boundary $\Gamma$, $L^{2}(\Omega)$ and $L^{2}(\Gamma)$ are the classical Hilbert spaces over $\mathbb{R}$ with respect to the Lebesgue measure $\mathrm{~d} x$ on $\Omega$ and the $(d-1)$-dimensional Hausdorff measure $\mathrm{~d}\sigma$ on $\Gamma$, and $(\cdot,\cdot)$ is the canonical scalar product of $L^{2}(\Omega)$. We consider $H^{1}(\Omega)$ and $W^{k,\infty}(\Omega)$, $k=1,2$ the usual $L^2$ and $L^{\infty}$-based Sobolev spaces over $\Omega$, respectively, and $D(\Omega)$ the space of the test functions on $\Omega$. We recall that there exists a unique linear bounded operator $\gamma_{0} : H^{1}(\Omega)\longrightarrow L^{2}(\Gamma)$ such that $\gamma_{0}(u)=u_{|_{\Gamma}}$ (the restriction of $u$ on $\Gamma$) if $u\in H^{1}(\Omega)\cap \mathcal{C}(\overline{\Omega})$, see \cite{arendt2011dirichlet}. The quantity $\gamma_{0}(u)$ is called the trace of $u$ and one can also use the notation $u_{|_{\Gamma}}$ for $u\in H^{1}(\Omega)$ (to simplify, we note $u$ instead of $u_{|_{\Gamma}}$). In the sequel, we will employ the following $H^{1}(\Omega)-$trace estimate:
\begin{eqnarray}
	\int_{\Gamma}|u|^{2}d\sigma\leqslant C \|u\|_{H^{1}(\Omega)}\|u\|_{L^{2}(\Omega)}, \label{Trace estimate}
\end{eqnarray}
where $C>0$ depending only on $\Omega$.
For the proof of the inequality \eqref{Trace estimate}, we refer to \cite[Theorem. 1.5.1.10]{grisvard1985elliptic}.
\par
For any $\Omega_{0}\subset\subset\Omega$ a regular open, we set $\mathcal{U}=\Omega\setminus\overline{\Omega_{0}}$, $\mathcal{U}_{T}=\mathcal{U}\times (0,T)$, $\Gamma_{0}=\partial\Omega_{0}$, $\Gamma=\partial\Omega$ and 
%\begin{equation*}
%	\mathcal{U}=\Omega\setminus\overline{\Omega_{0}}\quad\mbox{and}\quad \Gamma_{0}=\partial\Omega_{0}
%\end{equation*}
we introduce $H^{1}_{\Gamma_{0}}(\mathcal{U})$ the space of all those functions in $H^{1}(\mathcal{U})$ whose trace vanishes on $\Gamma_{0}$:
\begin{eqnarray*}
	H^{1}_{\Gamma_{0}}(\mathcal{U}):= \begin{cases}
		\left\{u\in H^{1}(\mathcal{U}):\;\;u=0\;\text{on}\;\Gamma_{0} \right\},\quad &\text{if}\; \Omega_{0}\neq \varnothing,\\
		H^{1}(\Omega) ,\quad &\text{if}\; \Omega_{0}= \varnothing.
	\end{cases}
\end{eqnarray*}
We will keep this space the induced norm of $H^{1}(\mathcal{U})$.
We note by $H_{\Gamma_{0}}^{1}(\mathcal{U})^{\prime}$ the dual of $H_{\Gamma_{0}}^{1}(\mathcal{U})$ and the product duality is denoted by $\langle \cdot,\cdot\rangle_{H_{\Gamma_{0}}^{1}(\mathcal{U})^{\prime}, H_{\Gamma_{0}}^{1}(\mathcal{U})}$. Clearly $H^{1}_{\Gamma_{0}}(\mathcal{U})$ is dense in $L^{2}(\mathcal{U})$, as usual we can identify $L^{2}(\mathcal{U})$ with a
dense subspace of $H^{1}_{\Gamma_{0}}(\mathcal{U})^{\prime}$. Here, we use the following weak definition of normal derivative.
Let $u\in H^{1}_{\Gamma_{0}}(\mathcal{U})$ which satisfies $\Delta u \in L^2(\mathcal{U})$ and for $h \in L^2(\Gamma)$ we can define the equality $\partial_{\n}u|_{\Gamma}:=h$ in a weak sense by
\begin{equation}
	\int_{\mathcal{U}} \Delta u\,v \mathrm{~d} x+\int_{\mathcal{U}} \nabla u \cdot \nabla v \mathrm{~d}x=\int_{\Gamma} h\,v \mathrm{~d}\sigma \quad \forall v \in H^{1}_{\Gamma_{0}}(\mathcal{U}). \label{dn}
\end{equation}
In this case, the function $h \in L^2(\Gamma)$ verifying \eqref{dn} is unique, for further details, see \cite{tucsnak2009observation}. This means that we define the normal derivative $\partial_{\n}u|_{\Gamma}$ of $u$ on $\Gamma$ by the validity of Green's formula.
\subsection{Weak solutions of system $\mathcal{S}(\Omega_{0},t_{1},t_{2}, F,G,\varepsilon,\mathfrak{B})$}
The Lions' theorem \cite{Lions,showalter2013monotone} provides a significant framework for establishing the existence and uniqueness of weak solutions for  $\mathcal{S}(\Omega_{0},t_{1},t_{2},F,G,\varepsilon,\mathfrak{B})$. Considering  
\begin{eqnarray}
	\hat{\varphi}(\cdot,t)=\varphi(\cdot,t_{2}-t),\;\; \hat{F}(\cdot,t)=-F(\cdot ,t_{2}-t),\;\; \hat{\mathfrak{B}}(\cdot,t)=-\mathfrak{B}(\cdot,t_{2}-t)\;\;\text{and}\;\; \tau=t_{2}-t_{1}. \label{data}
\end{eqnarray}
Then $\varphi$ is a solution of $\mathcal{S}(\Omega_{0},t_{1},t_{2},F,G,\varepsilon,\mathfrak{B})$ if and only if $\hat{\varphi}$ is a solution of the following forward system:  
\begin{equation} \label{s6w}
	\left\{
	\begin{aligned}
		\partial_t \hat{\varphi}-\varepsilon\Delta \hat{\varphi} +\nabla\cdot (\hat{\varphi}\hat{\mathfrak{B}}(x,t)) &=\hat{F}(x,t)\quad &&\text { in } \mathcal{U}\times (0,\tau), \\
		(\varepsilon \nabla\hat{\varphi} -\hat{\varphi} \hat{\mathfrak{B}}(x,t))\cdot \mathbf{n}(x)\mathds{1}_{\Gamma}(x)+\hat{\varphi}\mathds{1}_{\Gamma_{0}}(x) &=0 \quad && \text { on } \partial\mathcal{U}\times (0,\tau), \\
		\hat{\varphi}(x,0) &=G(x) \quad &&\text { in } \mathcal{U}.
	\end{aligned}
	\right.
\end{equation}
Let us consider the bilinear form defined on $[0,\tau]\times H^{1}_{\Gamma_{0}}(\mathcal{U})\times H^{1}_{\Gamma_{0}}(\mathcal{U})$ by
\begin{eqnarray}
	\mathfrak{a}_{w}(t,u,v):=\varepsilon\int_{\mathcal{U}}\nabla u\cdot\nabla v\mathrm{~d}x-\int_{\mathcal{U}}u\hat{\mathfrak{B}}(x,t)\cdot\nabla v\mathrm{~d}x. \label{fw}
\end{eqnarray} 
\begin{definition}
	Let  $F\in L^{2}(t_{1},t_{2}; H_{\Gamma_{0}}^{1}(\mathcal{U})^{\prime})$ and $G\in L^{2}(\mathcal{U})$. A weak solution of \eqref{s3} is a function $u\in L^{2}(t_{1},t_{2};H_{\Gamma_{0}}^{1}(\mathcal{U}))\cap H^{1}(t_{1},t_{2}; H_{\Gamma_{0}}^{1}(\mathcal{U})^{\prime})$ such that
	\begin{eqnarray} \label{weak solution}
		&&-\int_{t_{1}}^{t_{2}}(u(t),v^{\prime}(t)) \d t-\int_{t_{1}}^{t_{2}}\mathfrak{a}_{w}(t_{2}-t,u(t),v(t))\d t=\int_{t_{1}}^{t_{2}}\langle F(\cdot,t),v(t) \rangle_{ H_{\Gamma_{0}}^{1}(\mathcal{U})^{\prime},H_{\Gamma_{0}}^{1}(\mathcal{U})}\d t - (G,v(t_{2})), 
	\end{eqnarray}
	for all $v\in H^{1}(t_{1},t_{2};L^{2}(\mathcal{U}))\cap L^{2}(t_{1},t_{2};H^{1}_{\Gamma_{0}}(\mathcal{U}))$ such that $v(t_{1})=0$.
\end{definition}
\begin{proposition} \label{pw} Let $\mathfrak{B}\in L^{\infty}(\mathcal{U}_{T})^{d}$, then for all $0\leqslant t_{1}< t_{2}\leqslant T$, system \eqref{s6w}, and hence system $\mathcal{S}(\Omega_{0},t_{1},t_{2}, F, G,\varepsilon,\mathfrak{B})$ has a unique weak solution. Moreover there exists a constant $C>0$ independent of $\varepsilon$ such that, the weak solution of $\mathcal{S}(\Omega_{0},t_{1},t_{2}, F, G,\varepsilon,\mathfrak{B})$ verifies 
	\begin{eqnarray}\label{dissw}
		&&\|\varphi\|_{\mathcal{C}([t_{1},t_{2}];L^{2}(\mathcal{U}))}	+\sqrt{\varepsilon}\|\varphi\|_{L^{2}(t_{1},t_{2};H^{1}(\mathcal{U}))} \leqslant C\exp\left(C(t_{2}-t_{1})C(\varepsilon,\mathfrak{B})\right)\left(\|F\|_{L^{2}(t_{1},t_{2};L^{2}(\mathcal{U}))}+\|G\|_{L^{2}(\mathcal{U})}\right),
	\end{eqnarray}
	where $C(\varepsilon,\mathfrak{B}):=\frac{\|\mathfrak{B}\|^{2}_{L^{\infty}(\mathcal{U}_{T})}}{\varepsilon}+\varepsilon+1$.
\end{proposition}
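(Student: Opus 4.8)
\textbf{Proof proposal for Proposition \ref{pw}.}

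The plan is to apply Lions' theorem to the forward system \eqref{s6w} using the bilinear form $\mathfrak{a}_w$ from \eqref{fw}, and then to translate the result back to $\mathcal{S}(\Omega_0,t_1,t_2,F,G,\varepsilon,\mathfrak{B})$ via the change of variables \eqref{data}. First I would record the boundedness of $\mathfrak{a}_w(t,\cdot,\cdot)$ on $H^1_{\Gamma_0}(\mathcal{U})\times H^1_{\Gamma_0}(\mathcal{U})$, which follows immediately from Cauchy--Schwarz and $\hat{\mathfrak{B}}\in L^\infty$, with a bound $C(\varepsilon+\|\mathfrak{B}\|_{L^\infty(\mathcal{U}_T)})$ that is measurable (in fact we may take it constant) in $t$. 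Next comes the coercivity (Gårding) inequality: for $u\in H^1_{\Gamma_0}(\mathcal{U})$,
\begin{equation*}
	\mathfrak{a}_w(t,u,u)=\varepsilon\|\nabla u\|^2_{L^2(\mathcal{U})}-\int_{\mathcal{U}}u\,\hat{\mathfrak{B}}(x,t)\cdot\nabla u\,\d x
	\geq \varepsilon\|\nabla u\|^2_{L^2(\mathcal{U})}-\frac{\varepsilon}{2}\|\nabla u\|^2_{L^2(\mathcal{U})}-\frac{\|\mathfrak{B}\|^2_{L^\infty(\mathcal{U}_T)}}{2\varepsilon}\|u\|^2_{L^2(\mathcal{U})},
\end{equation*}
by Young's inequality, so that $\mathfrak{a}_w(t,u,u)+\lambda\|u\|^2_{L^2(\mathcal{U})}\geq \frac{\varepsilon}{2}\|u\|^2_{H^1_{\Gamma_0}(\mathcal{U})}$ with $\lambda=\tfrac12\|\mathfrak{B}\|^2_{L^\infty(\mathcal{U}_T)}/\varepsilon+\tfrac{\varepsilon}{2}$, which is of the order of $C(\varepsilon,\mathfrak{B})$. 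Lions' theorem (in the form in \cite{Lions,showalter2013monotone}) then yields a unique weak solution $\hat{\varphi}\in L^2(0,\tau;H^1_{\Gamma_0}(\mathcal{U}))\cap H^1(0,\tau;H^1_{\Gamma_0}(\mathcal{U})')$ of \eqref{s6w}, hence by \eqref{data} a unique weak solution $\varphi$ of \eqref{s3}; I would note that $\varphi\in \mathcal{C}([t_1,t_2];L^2(\mathcal{U}))$ follows from the standard embedding $L^2(0,\tau;H^1_{\Gamma_0})\cap H^1(0,\tau;(H^1_{\Gamma_0})')\hookrightarrow \mathcal{C}([0,\tau];L^2(\mathcal{U}))$.

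The quantitative estimate \eqref{dissw} is obtained by the standard energy method applied to $\hat{\varphi}$: test the weak formulation with $\hat{\varphi}(t)$ itself (justified by the density/Lions regularity), giving
\begin{equation*}
	\frac{1}{2}\frac{d}{dt}\|\hat{\varphi}(t)\|^2_{L^2(\mathcal{U})}+\varepsilon\|\nabla\hat{\varphi}(t)\|^2_{L^2(\mathcal{U})}
	=\int_{\mathcal{U}}\hat{\varphi}\,\hat{\mathfrak{B}}(x,t)\cdot\nabla\hat{\varphi}\,\d x+\langle \hat{F}(\cdot,t),\hat{\varphi}(t)\rangle.
\end{equation*}
Bounding the transport term by Young as above (absorbing half of the $\varepsilon\|\nabla\hat\varphi\|^2$ term) and the source term by $\|\hat F\|_{L^2(\mathcal{U})}\|\hat\varphi\|_{L^2(\mathcal{U})}\leq \tfrac12\|\hat F\|^2_{L^2(\mathcal{U})}+\tfrac12\|\hat\varphi\|^2_{L^2(\mathcal{U})}$ leads to
\begin{equation*}
	\frac{d}{dt}\|\hat{\varphi}(t)\|^2_{L^2(\mathcal{U})}+\varepsilon\|\nabla\hat{\varphi}(t)\|^2_{L^2(\mathcal{U})}\leq 2C(\varepsilon,\mathfrak{B})\|\hat{\varphi}(t)\|^2_{L^2(\mathcal{U})}+\|\hat{F}(\cdot,t)\|^2_{L^2(\mathcal{U})}.
\end{equation*}
Grönwall's lemma on $[0,\tau]$ with $\hat\varphi(0)=G$ controls $\|\hat\varphi\|_{\mathcal{C}([0,\tau];L^2(\mathcal{U}))}$ by $C\exp(C\tau\,C(\varepsilon,\mathfrak{B}))(\|\hat F\|_{L^2(0,\tau;L^2)}+\|G\|_{L^2(\mathcal{U})})$; integrating the differential inequality once more over $[0,\tau]$ and using this $L^\infty_t L^2_x$ bound on the right gives the $\sqrt{\varepsilon}\,\|\nabla\hat\varphi\|_{L^2(0,\tau;L^2)}$ part. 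Undoing \eqref{data} (which is an isometry on all the relevant norms and sends $\tau$ to $t_2-t_1$) yields \eqref{dissw}.

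The only genuinely delicate point is keeping track of the dependence on $\varepsilon$ so that the constant $C$ multiplying everything is \emph{independent of $\varepsilon$} and all the $\varepsilon$-dependence is confined to the explicit factor $\exp(C(t_2-t_1)C(\varepsilon,\mathfrak{B}))$ and the $\sqrt{\varepsilon}$ weight. This is why the transport term must be split as $\tfrac{\varepsilon}{2}\|\nabla\hat\varphi\|^2+\tfrac{1}{2\varepsilon}\|\mathfrak{B}\|^2_{L^\infty}\|\hat\varphi\|^2$ rather than in any $\varepsilon$-free way: the $\tfrac{\varepsilon}{2}\|\nabla\hat\varphi\|^2$ is absorbed on the left, and the price $\tfrac{1}{2\varepsilon}\|\mathfrak{B}\|^2_{L^\infty}$ is exactly the dominant term in $C(\varepsilon,\mathfrak{B})$. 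A secondary (routine) technical obstacle is justifying that $\hat\varphi(t)$ is an admissible test function in the weak formulation despite the test space in the definition requiring $v(t_1)=0$; this is handled in the usual way by a time-regularization/Steklov-averaging argument, or by invoking the energy equality valid for functions in $L^2(0,\tau;H^1_{\Gamma_0})\cap H^1(0,\tau;(H^1_{\Gamma_0})')$. I would treat both of these as standard and cite \cite{Lions,showalter2013monotone,tucsnak2009observation} accordingly.
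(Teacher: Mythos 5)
Your proposal is correct and follows essentially the same route as the paper: Lions' theorem applied to the time-reversed forward system with the quasi-coercivity obtained from the same Young splitting $\tfrac{\varepsilon}{2}\|\nabla u\|^2+\tfrac{1}{2\varepsilon}\|\mathfrak{B}\|^2_{L^\infty}\|u\|^2$, followed by the standard energy identity (the paper invokes \cite[Proposition III.1.2]{showalter2013monotone} for exactly the justification you describe) and Grönwall. The only cosmetic differences are that the paper works directly with $\varphi$ on $[t,t_2]$ rather than with $\hat\varphi$, and adds $\tfrac{\varepsilon}{2}\|\varphi\|^2_{L^2}$ to both sides to upgrade the gradient bound to the full $H^1$ norm, which is where the $+\varepsilon$ in $C(\varepsilon,\mathfrak{B})$ comes from.
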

\begin{proof}
	To prove the existence and uniqueness of a weak solution of system \eqref{s6w}, we apply Lions' theorem, so it suffices to prove that the form $\mathfrak{a}_{w}$ defined in \eqref{fw} satisfies:
	\begin{itemize}
		\item $t\longmapsto \mathfrak{a}_{w}(t,u,v)$ is measurable for all $u,v\in H_{\Gamma_{0}}^{1}(\mathcal{U})$;
		\item $\mathfrak{a}_{w}$ is $H_{\Gamma_{0}}^{1}(\mathcal{U})$-bounded;
		\item $\mathfrak{a}_{w}$ is quasi-coercive; i.e., there exist $\alpha> 0$ and $\kappa\geq 0$ such that
		\begin{eqnarray}
			\mathfrak{a}_{w}(t,u,u) + \kappa \|u\|_{L^{2}(\mathcal{U})}\geq  \alpha\|u\|^{2}_{H^{1}(\mathcal{U})},\quad \mbox{for all}\;(u,t)\in H^{1}_{\Gamma_{0}}(\mathcal{U})\times [0,\tau]. \label{quasi-coercive}
		\end{eqnarray}
	\end{itemize}
	Using the boundedness of $\mathfrak{B}$, we obtain $(x,t)\longmapsto \varepsilon\nabla u(x)\cdot\nabla v(x)-u\hat{\mathfrak{B}}(x,t)\cdot\nabla v (x)$ is integrable on $\mathcal{U}\times [0,T]$ for all $u,v\in H^{1}(\mathcal{U})$, then, in particular from Fubini's theorem, we obtain 
	$t\longmapsto \mathfrak{a}_{w}(t,u,v)$ is measurable for all $u,v\in H_{\Gamma_{0}}^{1}(\mathcal{U})$.
	On the other hand 
	\begin{eqnarray}
		|\mathfrak{a}_{w}(t,u,v)| &\leqslant & \varepsilon \|\nabla u\|_{L^{2}(\mathcal{U})}\|\nabla v\|_{L^{2}(\mathcal{U})}+\|\mathfrak{B}\|_{L^\infty(\mathcal{U}_{T})}\| u\|_{L^{2}(\mathcal{U})}\|\nabla v\|_{L^{2}(\mathcal{U})} \nonumber\\
		&\leqslant & \left(\varepsilon+\|\mathfrak{B}\|_{L^\infty(\mathcal{U}_{T})}\right)\| u\|_{H^{1}(\mathcal{U})}\| v\|_{H^{1}(\mathcal{U})}. \nonumber
	\end{eqnarray}
	Hence, the form $\mathfrak{a}_{w}$ is $H_{\Gamma_{0}}^{1}(\mathcal{U})$-bounded. We claim that $\mathfrak{a}_{w}$ is quasi-coercive. By Hölder's inequality, we get
	\begin{eqnarray*}
		\left|\int_{\mathcal{U}}u\hat{\mathfrak{B}}(x,t)\cdot\nabla u\mathrm{~d}x\right| &\leqslant & \|\mathfrak{B}\|_{L^\infty(\mathcal{U}_{T})}\| u\|_{L^{2}(\mathcal{U})}\|\nabla u\|_{L^{2}(\mathcal{U})}\\
		&\leqslant & \frac{\varepsilon}{2}\|\nabla u\|^{2}_{L^{2}(\mathcal{U})}+\frac{\|\mathfrak{B}\|^{2}_{L^\infty(\mathcal{U}_{T})}}{2\varepsilon}\| u\|^{2}_{L^{2}(\mathcal{U})}.
	\end{eqnarray*}
	Then 
	\begin{eqnarray*}
		\mathfrak{a}_{w}(t,u,u) &\geq &  \frac{\varepsilon}{2}\| u\|^{2}_{H^{1}(\mathcal{U})}-\left(\frac{\varepsilon}{2}
		+\frac{\|\mathfrak{B}\|^{2}_{L^\infty(\mathcal{U}_{T})}}{2\varepsilon}\right)\| u\|^{2}_{L^{2}(\mathcal{U})}.
	\end{eqnarray*}
	Lions' theorem and \cite[Proposition III.2.1]{showalter2013monotone} yield the result that \eqref{s6w} has a unique weak solution. Consequently system \eqref{s3} also admits a unique weak solution. 
	\par 
	Let $\varphi$ the unique weak solution of \eqref{s3}. 
	From \cite[Proposition III.1.2]{showalter2013monotone}, we have $\|\varphi(\cdot)\|^{2}_{L^{2}(\mathcal{U})}$ is absolutely continuous on $[0,T]$ and the following standard energy identity is satisfied. 
	\begin{eqnarray}
		\frac{1}{2}\frac{\d}{\d t}\|\varphi\|^{2}_{L^{2}(\mathcal{U})}=\langle \partial_{t}\varphi,\varphi \rangle_{H_{\Gamma_{0}}^{1}(\mathcal{U})^{\prime},H_{\Gamma_{0}}^{1}(\mathcal{U})}\;\;a.e\; t\in [0,T]. \label{energy identity}
	\end{eqnarray}
	Using \eqref{energy identity}
	and integrating by parts, we obtain
	\begin{eqnarray*}
		-\frac{1}{2}\frac{d}{\mathrm{~d}t}\int_{\mathcal{U}}|\varphi|^{2}\mathrm{~d}x	+\varepsilon\int_{\mathcal{U}}|\nabla\varphi|^{2}\mathrm{~d}x &=&- \int_{\mathcal{U}}\varphi \mathfrak{B}(x,t)\cdot\nabla\varphi\mathrm{~d}x-\int_{\mathcal{U}}F\varphi\mathrm{~d}x\\
        &\leqslant&  \|\mathfrak{B}\|_{L^{\infty}(\mathcal{U}_{T})}\|\varphi\|_{L^{2}(\mathcal{U})}\|\nabla\varphi\|_{L^{2}(\mathcal{U})}-\int_{\mathcal{U}}F\varphi\mathrm{~d}x.
	\end{eqnarray*}
	By Young's inequality, we get
	\begin{eqnarray*}
		-\frac{1}{2}\frac{d}{\mathrm{~d}t}\int_{\mathcal{U}}|\varphi|^{2}\mathrm{~d}x	+\varepsilon\int_{\mathcal{U}}|\nabla\varphi|^{2}\mathrm{~d}x\leqslant\frac{\|\mathfrak{B}\|^{2}_{L^{\infty}(\mathcal{U}_{T})}\|\varphi\|^{2}_{L^{2}(\mathcal{U})}}{2\varepsilon}+\frac{\varepsilon}{2}\|\nabla\varphi\|^{2}_{L^{2}(\mathcal{U})} -\int_{\mathcal{U}}F\varphi\mathrm{~d}x.
	\end{eqnarray*}
	Adding $\frac{\varepsilon}{2}\|\varphi\|^{2}_{L^{2}(\mathcal{U})}$ in both side and by Young's inequality, we deduce that 
	\begin{eqnarray*}
		&&-\frac{1}{2}\frac{d}{\mathrm{~d}t}\int_{\mathcal{U}}|\varphi|^{2}\mathrm{~d}x	+\frac{\varepsilon}{2}\|\varphi\|^{2}_{H^{1}(\mathcal{U})}\leqslant \frac{C(\varepsilon,\mathfrak{B})}{2}\|\varphi\|^{2}_{L^{2}(\mathcal{U})}+\frac{1}{2}\int_{\mathcal{U}}|F|^{2}\mathrm{~d}x, 
	\end{eqnarray*}
	where $C(\varepsilon,\mathfrak{B}):=\frac{\|\mathfrak{B}\|^{2}_{L^{\infty}(\mathcal{U}_{T})}}{\varepsilon}+\varepsilon+1$.
	Integrating this inequality in $[t,t_{2}]$, we obtain
	\begin{eqnarray*}
		\int_{\mathcal{U}}|\varphi(x,t)|^{2}\mathrm{~d}x	+\varepsilon\|\varphi\|^{2}_{L^{2}(t,t_{2};H^{1}(\mathcal{U}))}&\leqslant& C(\varepsilon,\mathfrak{B})\int_{t}^{t_{2}}\int_{\mathcal{U}}|\varphi(x,s )|^{2}\mathrm{~d}x\mathrm{~d}s +\left(\|F\|^{2}_{L^{2}(t_{1},t_{2};L^{2}(\mathcal{U}))}+\|G\|^{2}_{L^{2}(\mathcal{U})}\right).
	\end{eqnarray*}
	The Grönwall's lemma gives the desired result. 
\end{proof}
The following result gives an important estimate of the solutions of system $\mathcal{S}(\Omega_{0},t_{1},t_{2},F,0,\varepsilon,\mathfrak{B})$ for a particular source term $F$.
\begin{proposition}\label{P0}
   Let $f_{0}\in L^{2}(0,T;L^{2}(\mathcal{U}))$,  $f_{1},\cdots,f_{d}\in L^{2}(0,T;H^{1}_{0}(\mathcal{U}))$, $F=f_{0}+\varepsilon\displaystyle\sum_{i=1}^{d}\partial_{x_{i}}f_{i}$, $\varepsilon\in (0,1)$ and assume that $\mathfrak{B}\in W^{1,\infty}(\mathcal{U}_{T})^{d}$ such that $\mathfrak{B}(x,t)\cdot\mathbf{n}(x)\geq 0$ on $\Gamma_{T}$. There exists $C>0$ depending on $\mathfrak{B}$, $T$, $d$ and $\Omega$ such that, for any $0\leqslant t_1<t_2 \leqslant T$ and any $\varepsilon\in (0,1)$, the weak solution $\varphi$ of $\mathcal{S} (\Omega_{0},t_{1},t_{2}, F, 0,\varepsilon,\mathfrak{B})$ satisfies
	%\begin{equation}
	%	\left\{
	%	\begin{aligned}
	%		\partial_t \varphi+\varepsilon\Delta \varphi+\nabla \cdot\left(\varphi \mathfrak{B}(x,t)\right) &=F(x,t) & & \text { in } \mathcal{U}\times (t_{1},t_{2}), \\
	%		(\varepsilon\partial_{\mathbf{n}}\varphi+\varphi\mathfrak{B}(x,t)\cdot\mathbf{n}(x))\mathds{1}_{\Gamma}(x)+\varphi\mathds{1}_{\Gamma_{0}}(x)&=0  & & \text{ on } \partial\mathcal{U}\times (t_{1},t_{2}), \\
	%		\varphi(x,t_{2}) &=0 & & \text { in } \mathcal{U},
	%		\nonumber
	%	\end{aligned}
	%	\right.
	%\end{equation}
	\begin{eqnarray*}
		\|\varphi(\cdot,t_{1})\|^{2}_{L^{2}(\mathcal{U})}\leqslant C\sum_{i=0}^{d}\|f_{i}\|^{2}_{L^{2}(t_{1},t_{2};L^{2}(\mathcal{U}))}.
	\end{eqnarray*}
\end{proposition}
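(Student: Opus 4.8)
The plan is to use the energy method with a cleverly chosen weight, exploiting the sign condition $\mathfrak{B}\cdot\mathbf{n}\geq 0$ on $\Gamma_T$ to kill the boundary term coming from the transport part, and exploiting $f_i\in L^2(0,T;H^1_0(\mathcal{U}))$ to integrate by parts the $\partial_{x_i}f_i$ terms without producing any boundary contribution on either $\Gamma$ or $\Gamma_0$. First I would work with the forward reformulation \eqref{s6w}, so that $\hat\varphi$ solves a genuine forward parabolic problem on $(0,\tau)$ with $\tau=t_2-t_1$, initial datum $0$, and source $\hat F=-(f_0+\varepsilon\sum_i\partial_{x_i}f_i)$ (with arguments reflected in time). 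Testing the equation against $\hat\varphi$ and using the energy identity \eqref{energy identity}, I get
\begin{eqnarray*}
\frac{1}{2}\frac{\d}{\d t}\|\hat\varphi\|^2_{L^2(\mathcal{U})}+\varepsilon\|\nabla\hat\varphi\|^2_{L^2(\mathcal{U})}=\int_{\mathcal{U}}\hat\varphi\,\hat{\mathfrak{B}}\cdot\nabla\hat\varphi\,\d x+\langle\hat F,\hat\varphi\rangle-\tfrac12\int_\Gamma(\hat{\mathfrak{B}}\cdot\mathbf{n})|\hat\varphi|^2\d\sigma,
\end{eqnarray*}
where the last term has the good sign precisely because of the hypothesis $\mathfrak{B}\cdot\mathbf{n}\geq0$ (and the trace on $\Gamma_0$ vanishes since $\hat\varphi\in H^1_{\Gamma_0}$). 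The key point is that the source pairing is $\langle\hat F,\hat\varphi\rangle=-\int_{\mathcal{U}}f_0\hat\varphi\,\d x+\varepsilon\sum_i\int_{\mathcal{U}}f_i\,\partial_{x_i}\hat\varphi\,\d x$ after integrating by parts (no boundary term, as $f_i\in H^1_0(\mathcal{U})$), and crucially the factor $\varepsilon$ multiplying $f_i$ matches the $\varepsilon$-weight of the dissipation term, so $\varepsilon\int f_i\partial_{x_i}\hat\varphi$ is absorbed as $\frac{\varepsilon}{2d}\|\nabla\hat\varphi\|^2+\frac{\varepsilon d}{2}\|f_i\|^2$, the dissipation soaking up the gradient and leaving only $\varepsilon\|f_i\|^2\leq\|f_i\|^2$ (here $\varepsilon<1$ is used), without any $1/\varepsilon$ blow-up.

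Next I would bound the transport term $\int_{\mathcal{U}}\hat\varphi\,\hat{\mathfrak{B}}\cdot\nabla\hat\varphi\,\d x$. The naive Young estimate costs $\frac{1}{\varepsilon}\|\mathfrak{B}\|^2_\infty\|\hat\varphi\|^2$, which would ruin the $\varepsilon$-uniformity. The standard trick — and the reason the hypothesis $\mathfrak{B}=\nabla$-type (here just $W^{1,\infty}$) and the sign condition are imposed — is to integrate by parts: $\int_{\mathcal{U}}\hat\varphi\,\hat{\mathfrak{B}}\cdot\nabla\hat\varphi\,\d x=\tfrac12\int_{\mathcal{U}}\hat{\mathfrak{B}}\cdot\nabla(|\hat\varphi|^2)\,\d x=-\tfrac12\int_{\mathcal{U}}(\dv\hat{\mathfrak{B}})|\hat\varphi|^2\,\d x+\tfrac12\int_\Gamma(\hat{\mathfrak{B}}\cdot\mathbf{n})|\hat\varphi|^2\,\d\sigma$, the $\Gamma_0$-boundary term again vanishing. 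The surviving divergence term is controlled by $\frac12\|\dv\mathfrak{B}\|_{L^\infty}\|\hat\varphi\|^2_{L^2(\mathcal{U})}\leq\frac{d}{2}\|\nabla\mathfrak{B}\|_\infty\|\hat\varphi\|^2$, a constant independent of $\varepsilon$; and the boundary term $+\tfrac12\int_\Gamma(\hat{\mathfrak{B}}\cdot\mathbf{n})|\hat\varphi|^2$ exactly cancels the good boundary term $-\tfrac12\int_\Gamma(\hat{\mathfrak{B}}\cdot\mathbf{n})|\hat\varphi|^2$ from the Robin condition — or, if there is a factor mismatch, the sign condition still guarantees the net boundary contribution is $\leq0$. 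Putting these together yields a differential inequality
\begin{eqnarray*}
\frac{\d}{\d t}\|\hat\varphi\|^2_{L^2(\mathcal{U})}+\varepsilon\|\nabla\hat\varphi\|^2_{L^2(\mathcal{U})}\leq C_1\|\hat\varphi\|^2_{L^2(\mathcal{U})}+C_2\sum_{i=0}^d\|f_i\|^2_{L^2(\mathcal{U})},
\end{eqnarray*}
with $C_1,C_2$ depending only on $\|\mathfrak{B}\|_{W^{1,\infty}}$, $d$, $\Omega$ — and, importantly, not on $\varepsilon$.

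Finally, since $\hat\varphi(\cdot,0)=0$, Grönwall's lemma over $[0,\tau]$ with $\tau\leq T$ gives $\|\hat\varphi(\cdot,\tau)\|^2_{L^2(\mathcal{U})}\leq C_2 e^{C_1 T}\sum_{i=0}^d\|f_i\|^2_{L^2(0,\tau;L^2(\mathcal{U}))}$, and translating back through \eqref{data} — noting $\hat\varphi(\cdot,\tau)=\varphi(\cdot,t_1)$ and that the time-reflection is an isometry on the $L^2$-in-time norms — yields exactly the claimed estimate $\|\varphi(\cdot,t_1)\|^2_{L^2(\mathcal{U})}\leq C\sum_{i=0}^d\|f_i\|^2_{L^2(t_1,t_2;L^2(\mathcal{U}))}$ with $C=C(\mathfrak{B},T,d,\Omega)$. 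The one subtlety requiring care — and the main obstacle — is the rigorous justification of the integration by parts $\int\hat\varphi\,\hat{\mathfrak{B}}\cdot\nabla\hat\varphi=\tfrac12\int\hat{\mathfrak{B}}\cdot\nabla|\hat\varphi|^2$ and the appearance of the boundary term at the low regularity $\hat\varphi\in L^2(H^1_{\Gamma_0})\cap H^1((H^1_{\Gamma_0})')$: this needs the weak normal-derivative/Green's-formula framework set up around \eqref{dn}, plus the trace estimate \eqref{Trace estimate} to make sense of $\int_\Gamma(\mathfrak{B}\cdot\mathbf{n})|\hat\varphi|^2\d\sigma$, and possibly a density argument approximating $\hat\varphi$ by smooth functions to legitimize the chain rule $\nabla|\hat\varphi|^2=2\hat\varphi\nabla\hat\varphi$. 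Everything else is routine Young-inequality bookkeeping with the $\varepsilon<1$ normalization ensuring all constants stay $\varepsilon$-independent.
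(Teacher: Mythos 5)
Your proposal is correct and follows essentially the same route as the paper: an energy identity, integration by parts of the $\varepsilon\,\partial_{x_i}f_i$ terms against the solution (using $f_i\in H^1_0(\mathcal{U})$ so no boundary terms appear, and absorbing the resulting gradient by the $\varepsilon$-weighted dissipation), control of the transport contribution via $\|\nabla\cdot\mathfrak{B}\|_{L^\infty}$ together with the sign condition $\mathfrak{B}\cdot\mathbf{n}\geq 0$ to discard the boundary term, and Grönwall. The only cosmetic difference is that you work in the forward variable $\hat\varphi$ on $(0,\tau)$ and propagate from the zero initial datum, whereas the paper integrates the differential inequality backward from $t_2$ directly for $\varphi$; the two are equivalent.
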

\begin{proof}
	Using the energy identity \eqref{energy identity}, $\varepsilon\partial_{\n}\varphi(x,t)+\varphi\mathfrak{B}(x,t)\cdot\mathbf{n}(x)=0$ on $\Gamma_{T}$, $\varphi(\cdot,t)=0$ on $\Gamma_{0}$ and integration by parts, we obtain 
		\begin{eqnarray}
			&&\frac{1}{2}\frac{d}{dt}\|\varphi(\cdot,t)\|^{2}_{L^{2}(\mathcal{U})}=\varepsilon\int_{\mathcal{U}}|\nabla\varphi(x,t)|^{2}\mathrm{~d}x-\frac{1}{2}\int_{\mathcal{U}}\nabla\cdot\mathfrak{B}(x,t)|\varphi(x,t)|^{2}\mathrm{~d}x \nonumber\\
			&& + \frac{1}{2}\int_{\Gamma}|\varphi(x,t)|^{2}\mathfrak{B}(x,t)\cdot\n(x)\d x+ \int_{\mathcal{U}}f_{0}(x,t)\varphi(x,t)\mathrm{~d}x +\varepsilon\sum_{i=1}^{d}\int_{\mathcal{U}}\partial_{x_{i}}f_{i}(x,t)\varphi(x,t)\mathrm{~d}x. \label{i1}
		\end{eqnarray}
	On the other hand, since $f_{i}(\cdot,t)\in H^{1}_{0}(\mathcal{U})$ for $i=1,\cdots,d$, by integration by parts, we have
	\begin{eqnarray*}
		\int_{\mathcal{U}}\partial_{x_{i}}f_{i}(x,t)\varphi(x,t)\mathrm{~d}x=-\int_{\mathcal{U}}f_{i}(x,t)\partial_{x_{i}}\varphi(x,t)\mathrm{~d}x.
	\end{eqnarray*}
	Thus, by Cauchy-Schwarz and Hölder inequalities, we get 
	\begin{eqnarray}
		\varepsilon\left|\int_{\mathcal{U}}\partial_{x_{i}}f_{i}(x,t)\varphi(x,t)\mathrm{~d}x\right|
		%&\leqslant & \varepsilon \|f_{i}(\cdot,t)\|_{L^{2}(\mathcal{U})}\|\nabla\varphi (\cdot,t)\|_{L^{2}(\mathcal{U})} \nonumber\\
		%&\leqslant & \left(\sqrt{\frac{\varepsilon}{d}}\|\nabla\varphi (\cdot,t)\|_{L^{2}(\mathcal{U})}\right)\left(\sqrt{d}\|f_{i}(\cdot,t)\|_{L^{2}(\mathcal{U})}\right) \nonumber\\
		&\leqslant & \frac{\varepsilon}{2d}\|\nabla\varphi (\cdot,t)\|^{2}_{L^{2}(\mathcal{U})}+\frac{d}{2}\|f_{i} (\cdot,t)\|^{2}_{L^{2}(\mathcal{U})} \label{i2}
	\end{eqnarray}
	and 
	\begin{eqnarray}
		\left|\int_{\mathcal{U}}f_{0}(x,t)\varphi(x,t)\mathrm{~d}x\right| 
		%&\leqslant & \frac{1}{2}\|f_{0} (\cdot,t)\|^{2}_{L^{2}(\mathcal{U})}+\frac{1}{2}\|\varphi (\cdot,t)\|^{2}_{L^{2}(\mathcal{U})} \nonumber\\
		&\leqslant & \frac{d}{2}\|f_{0} (\cdot,t)\|^{2}_{L^{2}(\mathcal{U})}+\frac{1}{2}\|\varphi (\cdot,t)\|^{2}_{L^{2}(\mathcal{U})}. \label{i3}
	\end{eqnarray}
	Using \eqref{i1}-\eqref{i3} and $\mathfrak{B}(x,t)\cdot\n(x)\geq 0$ on $\Gamma_{T}$, we obtain 
	\begin{eqnarray}
		\frac{1}{2}\frac{d}{dt}\|\varphi(\cdot,t)\|^{2}_{L^{2}(\mathcal{U})} &\geq& \frac{\varepsilon}{2}\int_{\mathcal{U}}|\nabla\varphi(x,t)|^{2}\mathrm{~d}x-\frac{d}{2}\sum_{i=0}^{d}\|f_{i}(\cdot,t)\|^{2}_{L^{2}(\mathcal{U})}  -C\|\varphi(\cdot,t)\|^{2}_{L^{2}(\mathcal{U})}\nonumber\\
		&\geq&-\frac{d}{2}\sum_{i=0}^{d}\|f_{i}(\cdot,t)\|^{2}_{L^{2}(\mathcal{U})} -C\|\varphi(\cdot,t)\|^{2}_{L^{2}(\mathcal{U})}, \label{i4}
	\end{eqnarray}
	where $C:=\|\nabla\cdot\mathfrak{B}\|_{L^{\infty}(\mathcal{U}_{T})}+1$. Integrating \eqref{i4} in $(t,t_{2})$ for $t_{1}\leqslant t < t_{2}$, we have
	\begin{eqnarray}
		\|\varphi(\cdot,t)\|^{2}_{L^{2}(\mathcal{U})}&\leqslant&  d\sum_{i=0}^{d}\int_{t}^{t_{2}}\|f_{i}(\cdot,s)\|^{2}_{L^{2}(\mathcal{U})}\d s +2C\int_{t}^{t_{2}}\|\varphi(\cdot,s)\|^{2}_{L^{2}(\mathcal{U})}\mathrm{~d}s. \nonumber  
	\end{eqnarray}
	Applying Grönwall's lemma, we obtain the desired result. 
\end{proof}
\subsection{Strong solutions of system $\mathcal{S}(\Omega_{0},t_{1},t_{2}, F, G,\varepsilon,\mathfrak{B})$}
The existence and uniqueness of strong solutions for system \eqref{s3} is derived mainly from the reference \cite{arendt2014maximal}. In this section, we will assume that $\mathfrak{B}\in W^{1,\infty}(\mathcal{U}_{T})^{d}$, allowing us to write \eqref{s3} and \eqref{s6w} respectively as follows:
\begin{equation} \label{s5}
	\left\{
	\begin{aligned}
		\partial_t \varphi+\varepsilon\Delta \varphi + \mathfrak{B}(x,t)\cdot\nabla\varphi+\left(\nabla\cdot\mathfrak{B}(x,t)\right)\varphi &=F(x,t)\quad &&\text { in } \mathcal{U}\times (t_{1},t_{2}), \\
		\left(\varepsilon \nabla\varphi +\varphi \mathfrak{B} (x,t)\right)\cdot \mathbf{n}(x)\mathds{1}_{\Gamma}(x)+\varphi\mathds{1}_{\Gamma_{0}}(x) &=0 \quad && \text { on } \partial\mathcal{U}\times (t_{1},t_{2}), \\
		\varphi(x,t_{2}) &=G(x) \quad &&\text { in } \mathcal{U}
	\end{aligned}
	\right.
\end{equation}
and
\begin{equation} \label{s6}
	\left\{
	\begin{aligned}
		\partial_t \hat{\varphi}-\varepsilon\Delta \hat{\varphi} + \hat{\mathfrak{B}}(x,t)\cdot\nabla\hat{\varphi}+(\nabla\cdot\hat{\mathfrak{B}}(x,t))\hat{\varphi} &=\hat{F}(x,t)\quad &&\text { in } \mathcal{U}\times (0,\tau), \\
		(\varepsilon \nabla\hat{\varphi} -\hat{\varphi} \hat{\mathfrak{B}}(x,t))\cdot \mathbf{n}(x)\mathds{1}_{\Gamma}(x)+\hat{\varphi}\mathds{1}_{\Gamma_{0}}(x) &=0 \quad && \text { on } \partial\mathcal{U}\times (0,\tau), \\
		\hat{\varphi}(x,0) &=G(x) \quad &&\text { in } \mathcal{U}.
	\end{aligned}
	\right.
\end{equation}
We consider the bilinear form defined on $[0,\tau]\times H_{\Gamma_{0}}^{1}(\mathcal{U})\times H_{\Gamma_{0}}^{1}(\mathcal{U})$ by
\begin{eqnarray*}
	\mathfrak{a}(t,u,v)&:=&\varepsilon\int_{\mathcal{U}}\nabla u\cdot\nabla v\mathrm{~d}x-\int_{\Gamma}(\hat{\mathfrak{B}}(x,t)\cdot\n(x))u\; v\mathrm{~d}\sigma +
	\int_{\mathcal{U}}(\hat{\mathfrak{B}}(x,t)\cdot\nabla u)v\mathrm{~d}x
	+\int_{\mathcal{U}}(\nabla\cdot\hat{\mathfrak{B}}(x,t))u\,v\mathrm{~d}x
\end{eqnarray*}
and the following maximal regularity space
$$MR_{\mathfrak{a}}(t_{1},t_{2}):=\left\{u\in H^{1}(t_{1},t_{2};L^{2}(\mathcal{U}))\cap L^{2}(t_{1},t_{2};H_{\Gamma_{0}}^{1}(\mathcal{U})):\;\; \mathcal{A}(\cdot)u(\cdot)\in L^{2}(t_{1},t_{2};L^{2}(\mathcal{U}))\right\},$$
where $\mathcal{A}(t)\in \mathcal{L}\left(H_{\Gamma_{0}}^{1}(\mathcal{U}), H_{\Gamma_{0}}^{1}(\mathcal{U})^{\prime}\right)$ is the operator associated with $\mathfrak{a}(t,\cdot,\cdot)$ and defined by 
$$\langle \mathcal{A}(t)u,v\rangle_{H_{\Gamma_{0}}^{1}(\mathcal{U})^{\prime},H_{\Gamma_{0}}^{1}(\mathcal{U})} :=\mathfrak{a}(t,u,v).$$
It is a Hilbert space for the norm $\|\cdot\|_{MR_{\mathfrak{a}}(t_{1},t_{2})}$ defined by
$$\|u\|_{MR_{\mathfrak{a}}(t_{1},t_{2})}:=\left(\|u\|^{2}_{L^{2}(t_{1},t_{2};H^{1}(\mathcal{U}))}+\|\partial_{t}u \|^{2}_{L^{2}(t_{1},t_{2};L^{2}(\mathcal{U}))}+\|\mathcal{A}(\cdot)u(\cdot)\|^{2}_{L^{2}(t_{1},t_{2};L^{2}(\mathcal{U}))}\right)^{1/2}.$$
We have the following important result:
\begin{proposition} \label{embedded}
	The space $MR_{\mathfrak{a}}(t_{1},t_{2})$ embeds continuously into $\mathcal{C}\left([t_{1},t_{2}]; H_{\Gamma_{0}}^{1}(\mathcal{U})\right)$.
\end{proposition}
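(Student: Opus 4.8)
The plan is to prove the continuous embedding $MR_{\mathfrak{a}}(t_1,t_2)\hookrightarrow \mathcal{C}([t_1,t_2];H^1_{\Gamma_0}(\mathcal{U}))$ by the classical energy/interpolation method, realizing the intermediate space $H^1_{\Gamma_0}(\mathcal{U})$ as the real interpolation space halfway between $L^2(\mathcal{U})$ and $D(\mathcal{A}(t))$, and then combining this with the basic maximal-regularity embedding into $\mathcal{C}([t_1,t_2];L^2(\mathcal{U}))$. The starting point is that $MR_{\mathfrak{a}}(t_1,t_2)\subset H^1(t_1,t_2;L^2(\mathcal{U}))\cap L^2(t_1,t_2;H^1_{\Gamma_0}(\mathcal{U}))$, and by the standard Lions–Magenes trace theorem this already gives $u\in\mathcal{C}([t_1,t_2];[H^1_{\Gamma_0}(\mathcal{U}),L^2(\mathcal{U})]_{1/2})$; I would isolate the extra regularity coming from $\mathcal{A}(\cdot)u(\cdot)\in L^2(t_1,t_2;L^2(\mathcal{U}))$ to bootstrap from the interpolation space up to $H^1_{\Gamma_0}(\mathcal{U})$ itself.

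First I would record the coercivity properties of $\mathfrak{a}(t,\cdot,\cdot)$: exactly as in the computation in the proof of Proposition \ref{pw} (absorbing the transport and zeroth-order terms with Young's inequality and using the trace estimate \eqref{Trace estimate} to control $\int_\Gamma(\hat{\mathfrak{B}}\cdot\mathbf{n})u^2\,d\sigma$ by $\tfrac{\varepsilon}{2}\|u\|_{H^1}^2 + C_\varepsilon\|u\|_{L^2}^2$), there are constants $\alpha>0$, $\kappa\ge 0$, $M>0$ such that $\mathfrak{a}(t,u,u)+\kappa\|u\|_{L^2(\mathcal{U})}^2\ge \alpha\|u\|_{H^1(\mathcal{U})}^2$ and $|\mathfrak{a}(t,u,v)|\le M\|u\|_{H^1}\|v\|_{H^1}$, uniformly in $t\in[0,\tau]$; moreover $t\mapsto\mathfrak{a}(t,u,v)$ is measurable. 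This puts us exactly in the setting of the maximal-regularity theory of Arendt–Dier–Laasri–Ouhabaz (the reference \cite{arendt2014maximal} the authors cite). Second, I would invoke the abstract theorem from that framework: for a non-autonomous form with the above properties, any $u\in MR_{\mathfrak{a}}(t_1,t_2)$ — i.e. $u\in H^1(t_1,t_2;L^2)\cap L^2(t_1,t_2;H^1_{\Gamma_0})$ with $\mathcal{A}(\cdot)u(\cdot)\in L^2(t_1,t_2;L^2)$ — belongs to $\mathcal{C}([t_1,t_2];H^1_{\Gamma_0}(\mathcal{U}))$ with $\|u\|_{\mathcal{C}([t_1,t_2];H^1_{\Gamma_0})}\le C\|u\|_{MR_{\mathfrak{a}}(t_1,t_2)}$, the constant $C$ depending only on $\alpha,\kappa,M$ and $t_2-t_1$ (and this holds under the form-bounded/coercive hypotheses without any extra symmetry or Kato-square-root assumption, which is the point of that line of work).

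If one prefers a self-contained argument rather than quoting the black box, the key computation is the following formal identity, made rigorous by a density/Steklov-averaging argument: for $u\in MR_{\mathfrak{a}}(t_1,t_2)$,
\begin{equation*}
\frac{d}{dt}\,\mathfrak{a}(t,u(t),u(t)) = (\partial_t\mathfrak{a})(t,u(t),u(t)) + 2\,\langle \mathcal{A}(t)u(t),\partial_t u(t)\rangle,
\end{equation*}
so that, replacing $\mathcal{A}(t)$ by $\mathcal{A}(t)+\kappa$ to exploit coercivity and writing $\mathfrak{a}_\kappa(t,u,u)=\mathfrak{a}(t,u,u)+\kappa\|u\|_{L^2}^2\ge\alpha\|u\|_{H^1}^2$, one integrates in time to get
\begin{equation*}
\alpha\|u(t)\|_{H^1(\mathcal{U})}^2 \le \mathfrak{a}_\kappa(s,u(s),u(s)) + C\!\int_{t_1}^{t_2}\!\|u(r)\|_{H^1}^2\,dr + 2\!\int_{t_1}^{t_2}\!\|\mathcal{A}(r)u(r)+\kappa u(r)\|_{L^2}\|\partial_t u(r)\|_{L^2}\,dr,
\end{equation*}
where the term $(\partial_t\mathfrak{a})(t,u,u)$ is controlled by $C\|u\|_{H^1}^2$ using only that $t\mapsto\hat{\mathfrak{B}}(\cdot,t)$ is bounded in $W^{1,\infty}$ — here one must be slightly careful, since $\mathfrak{a}$ need not be differentiable in $t$, so I would instead estimate the difference $\mathfrak{a}(t,u,u)-\mathfrak{a}(s,u,u)$ directly by $C|t-s|\,\|u\|_{H^1}^2$ (Lipschitz dependence of the coefficients suffices and avoids invoking $\partial_t\mathfrak{a}$ at all). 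Averaging the right-hand side over $s\in(t_1,t_2)$ and using $u\in L^2(t_1,t_2;H^1)$, $\partial_t u, \mathcal{A}(\cdot)u(\cdot)\in L^2(t_1,t_2;L^2)$ bounds $\sup_t\|u(t)\|_{H^1}$ by $C\|u\|_{MR_{\mathfrak{a}}(t_1,t_2)}$; the continuity in $t$ (as opposed to mere boundedness) then follows because $t\mapsto\mathfrak{a}_\kappa(t,u(t),u(t))$ is continuous (product of the $\mathcal{C}([t_1,t_2];L^2)$-valued maps $\mathcal{A}(\cdot)u(\cdot)$-type pairings plus absolute continuity of the energy as in \eqref{energy identity}) together with weak continuity of $t\mapsto u(t)$ in $H^1$, and weak continuity plus convergence of norms upgrades to strong continuity.

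The main obstacle is precisely this last point — the lack of time-differentiability of the form $\mathfrak{a}(t,\cdot,\cdot)$, which prevents a naive application of the symmetric-form energy identity and forces either the Lipschitz-in-time difference estimate described above or an appeal to the non-autonomous maximal regularity results of \cite{arendt2014maximal}; everything else (coercivity, boundedness, measurability) is a routine repetition of the estimates already carried out for $\mathfrak{a}_w$ in the proof of Proposition \ref{pw}, adjusted to include the boundary and zeroth-order terms via \eqref{Trace estimate}.
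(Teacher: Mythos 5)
Your main route coincides with the paper's: the paper's entire proof of this proposition is the single citation to \cite[Corollary 3.3]{arendt2014maximal}, i.e., exactly the non-autonomous maximal-regularity embedding you invoke after checking boundedness, quasi-coercivity and measurability of $\mathfrak{a}$. Your supplementary self-contained sketch goes beyond what the paper does and is broadly reasonable, though the identity $\frac{d}{dt}\,\mathfrak{a}(t,u(t),u(t)) = (\partial_t\mathfrak{a})(t,u(t),u(t)) + 2\langle \mathcal{A}(t)u(t),\partial_t u(t)\rangle$ implicitly uses symmetry of the form, which fails here because of the transport and boundary terms; this is precisely why the cited reference works with a symmetric principal part $\mathfrak{a}_{1}$ plus a lower-order perturbation $\mathfrak{a}_{2}$, as the paper does later in the proof of Proposition \ref{ps}.
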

\begin{proof}
	For more details, we refer to
	\cite[Corollary 3.3]{arendt2014maximal}. 
\end{proof}
For all $t\in [0,\tau]$, we define the operators $A_{1}(t)$ and $A_{2}(t)$ by
\begin{eqnarray*}
	&&D(A_{1}(t)):=\left\{u\in H_{\Gamma_{0}}^{1}(\mathcal{U}):\;\Delta u\in L^{2}(\mathcal{U}),\; \varepsilon\partial_{\n}u|_{\Gamma} -\hat{\mathfrak{B}}(x,t)\cdot\n(x) u|_{\Gamma}=0 \right\},\\
	&&D(A_{2}(t)):= H_{\Gamma_{0}}^{1}(\mathcal{U})
\end{eqnarray*} 
and for all $(u,v)\in D(A_{1}(t))\times D(A_{2}(t))$ 
$$A_{1}(t)u:=-\varepsilon\Delta u\quad\mbox{and}\quad A_{2}(t) v:=\hat{\mathfrak{B}}(x,t)\cdot\nabla v+(\nabla\cdot\hat{\mathfrak{B}}(x,t))v.$$
System \eqref{s6} can be written equivalently as a Cauchy initial valued problem
\begin{equation}\label{cau1}
	\left\{
	\begin{aligned}
		Y^{\prime}+A(t)Y &=\hat{F}(\cdot, t) & & t\in [0,\tau], \\
		Y(0)&=G, 
	\end{aligned}
	\right.
\end{equation}
where $A(t)=A_{1}(t)+A_{2}(t)$, $D(A(t))=D(A_{1}(t))$ and $Y(t)=\hat{\varphi}(\cdot,t)$. We start with the definition of a strong solution of \eqref{s5}.
\begin{definition} 
	Let $F\in L^{2}(t_{1},t_{2};L^{2}(\mathcal{U}))$ and $G\in L^{2}(\mathcal{U})$. A strong solution of \eqref{s3} is a function $\varphi\in MR_{\mathfrak{a}}(t_{1},t_{2})$ fulfilling $\eqref{s5}_{1}$ in $L^{2}(t_{1},t_{2};L^{2}(\mathcal{U}))$, $\eqref{s5}_{2}$ in $L^{2}(t_{1},t_{2};L^{2}(\partial\mathcal{U}))$ and $\eqref{s5}_{3}$, where $\eqref{s5}_{j}$ is the j-th equation in system \eqref{s5}.
\end{definition}
Now we are in position to establish the following existence, uniqueness and regularity results.
\begin{proposition} \label{ps}
	Let $\mathfrak{B}\in W^{1,\infty}(\mathcal{U}_{T})^{d}$, $F\in L^{2}(t_{1},t_{2};L^{2}(\mathcal{U}))$ and $G\in H_{\Gamma_{0}}^{1}(\mathcal{U})$. Then the Cauchy problem \eqref{cau1}, and hence system \eqref{s3} has a unique strong 
	solution $\varphi\in MR_{\mathfrak{a}}(t_{1},t_{2})$. Moreover, $\varphi\in \mathcal{C}([t_{1},t_{2}];L^{2}(\mathcal{U}))$ and there exists a constant $C:=C(T,\varepsilon)> 0$ such that
	\begin{equation}
		\|\varphi\|_{MR_{\mathfrak{a}}(t_{1},t_{2})}\leqslant C\left(\|F\|_{L^{2}(t_{1},t_{2};L^{2}(\mathcal{U}))}+\|G\|_{H^{1}(\mathcal{U})}\right). \label{ee}
	\end{equation}
\end{proposition}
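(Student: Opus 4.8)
The plan is to realize system \eqref{s6} as an abstract non-autonomous Cauchy problem \eqref{cau1} of the form $Y' + A(t)Y = \hat F(\cdot,t)$, $Y(0)=G$, with $A(t)=A_1(t)+A_2(t)$, and to invoke the non-autonomous maximal $L^2$-regularity theory from \cite{arendt2014maximal}. The first step is to check that the form $\mathfrak{a}(t,\cdot,\cdot)$ generating $A_1(t)$ satisfies the standing hypotheses of that theory: measurability of $t\mapsto\mathfrak{a}(t,u,v)$ for fixed $u,v\in H^1_{\Gamma_0}(\mathcal{U})$ (clear, since $\hat{\mathfrak{B}}\in W^{1,\infty}(\mathcal{U}_T)^d\subset L^\infty$), boundedness $|\mathfrak{a}(t,u,v)|\leq M\|u\|_{H^1(\mathcal{U})}\|v\|_{H^1(\mathcal{U})}$ uniformly in $t$ (here the boundary term is controlled using the trace estimate \eqref{Trace estimate} and the $W^{1,\infty}$ bound on $\hat{\mathfrak{B}}$), and quasi-coercivity: there exist $\alpha>0$, $\kappa\geq 0$ with $\mathfrak{a}(t,u,u)+\kappa\|u\|_{L^2(\mathcal{U})}^2\geq \alpha\|u\|_{H^1(\mathcal{U})}^2$. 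For the latter, the leading term is $\varepsilon\|\nabla u\|_{L^2(\mathcal{U})}^2$; the boundary term $-\int_\Gamma(\hat{\mathfrak{B}}\cdot\n)u^2\,\mathrm{d}\sigma$ and the lower-order terms $\int_{\mathcal{U}}(\hat{\mathfrak{B}}\cdot\nabla u)u + \int_{\mathcal{U}}(\nabla\cdot\hat{\mathfrak{B}})u^2$ are absorbed by $\tfrac{\varepsilon}{2}\|\nabla u\|_{L^2}^2$ via \eqref{Trace estimate} (with an interpolation/Young argument to handle $\|u\|_{H^1}\|u\|_{L^2}$) and Young's inequality, at the cost of enlarging $\kappa$; note $\kappa$ and the constants are allowed to depend on $\varepsilon$, which is why the final bound \eqref{ee} has $C=C(T,\varepsilon)$.

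The second step is to identify $D(A_1(t))$ with the stated set $\{u\in H^1_{\Gamma_0}(\mathcal{U}):\Delta u\in L^2(\mathcal{U}),\ \varepsilon\partial_\n u|_\Gamma - (\hat{\mathfrak{B}}\cdot\n)u|_\Gamma=0\}$: this is exactly the statement that $\mathcal{A}_1(t)u\in L^2(\mathcal{U})$ forces, via the weak Green formula \eqref{dn}, both $-\varepsilon\Delta u\in L^2(\mathcal{U})$ and the Robin condition on $\Gamma$ together with the Dirichlet condition on $\Gamma_0$ already built into $H^1_{\Gamma_0}(\mathcal{U})$. Once the form hypotheses are verified, \cite[Theorem/Corollary]{arendt2014maximal} yields that for every $G\in H^1_{\Gamma_0}(\mathcal{U})$ (the trace space of the maximal regularity class, by Proposition \ref{embedded}) and every $\hat F\in L^2(0,\tau;L^2(\mathcal{U}))$, problem \eqref{cau1} has a unique solution $Y\in MR_{\mathfrak{a}}(t_1,t_2)$ with the a priori estimate $\|Y\|_{MR_{\mathfrak{a}}}\leq C(\|\hat F\|_{L^2(L^2)}+\|G\|_{H^1})$. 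The bounded, $A_1$-relatively-bounded (indeed $A_1$-compact, or simply lower order) perturbation $A_2(t)$ preserves maximal regularity; since $A_2(t)u=\hat{\mathfrak{B}}\cdot\nabla u+(\nabla\cdot\hat{\mathfrak{B}})u$ maps $H^1_{\Gamma_0}(\mathcal{U})$ into $L^2(\mathcal{U})$ with norm $\lesssim \|\hat{\mathfrak{B}}\|_{W^{1,\infty}}\|u\|_{H^1}$, a standard perturbation argument (or absorbing $A_2$ into the coercivity constant of the combined form) keeps us inside the Arendt--Dier--Laasri--Ouhabaz framework. Translating back via \eqref{data} gives the corresponding statement for \eqref{s6} and hence for \eqref{s3}/\eqref{s5}, and the continuity $\varphi\in\mathcal{C}([t_1,t_2];L^2(\mathcal{U}))$ follows from $MR_{\mathfrak{a}}(t_1,t_2)\hookrightarrow \mathcal{C}([t_1,t_2];H^1_{\Gamma_0}(\mathcal{U}))\hookrightarrow\mathcal{C}([t_1,t_2];L^2(\mathcal{U}))$ by Proposition \ref{embedded}.

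I expect the main obstacle to be the verification of the form hypotheses in the presence of the nonsymmetric boundary term $-\int_\Gamma(\hat{\mathfrak{B}}\cdot\n)uv\,\mathrm{d}\sigma$: unlike the interior first-order term, it cannot be bounded purely by $\|u\|_{L^2}\|\nabla v\|_{L^2}$, so one must use the sharp trace inequality \eqref{Trace estimate} of the form $\int_\Gamma|u|^2\,\mathrm{d}\sigma\leq C\|u\|_{H^1(\mathcal{U})}\|u\|_{L^2(\mathcal{U})}$ and then an $\varepsilon$-Young splitting $C\|u\|_{H^1}\|u\|_{L^2}\leq \tfrac{\varepsilon}{2}\|u\|_{H^1}^2 + \tfrac{C^2}{2\varepsilon}\|u\|_{L^2}^2$ to keep the quadratic-in-$\nabla u$ part absorbable while only worsening the $L^2$ term (hence $\kappa$, hence the $\varepsilon$-dependence of $C$ in \eqref{ee}). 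The remaining steps — measurability, $H^1$-boundedness, the perturbation argument for $A_2$, and the change of variables \eqref{data} — are routine once this absorption is set up cleanly; and the identification of $D(A_1(t))$ is just an application of the weak normal-derivative definition \eqref{dn}.
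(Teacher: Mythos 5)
Your route is the same as the paper's: rewrite \eqref{s3} as the forward problem \eqref{s6} via \eqref{data}, split the generator as $A_1(t)+A_2(t)$ with the corresponding form decomposition, invoke the non-autonomous maximal regularity theorem of \cite{arendt2014maximal}, identify the domain through the weak normal derivative \eqref{dn}, and conclude the continuity and the estimate \eqref{ee} from Proposition \ref{embedded}. So the strategy is sound and matches the paper.

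There is, however, one substantive omission in your checklist of hypotheses. Measurability, $H^1$-boundedness and quasi-coercivity are the hypotheses of \emph{Lions'} theorem and only yield a weak solution with $\partial_t u\in L^2(t_1,t_2;H^1_{\Gamma_0}(\mathcal{U})')$; they are \emph{not} sufficient for maximal regularity in $L^2(\mathcal{U})$ (this insufficiency is precisely the point of the non-autonomous theory). What \cite[Theorem 4.2 and Remark 4.6]{arendt2014maximal} actually requires is the decomposition $\mathfrak{a}=\mathfrak{a}_1+\mathfrak{a}_2$ where the principal part $\mathfrak{a}_1$ (here: $\varepsilon\int_{\mathcal{U}}\nabla u\cdot\nabla v - \int_\Gamma(\hat{\mathfrak{B}}\cdot\n)uv$) must in addition be \emph{Lipschitz continuous in $t$} and satisfy the \emph{Kato square root property} $R(A_1(t)^{-1/2})=H^1_{\Gamma_0}(\mathcal{U})$, while $\mathfrak{a}_2$ need only be measurable in $t$ and bounded $H^1_{\Gamma_0}(\mathcal{U})\times L^2(\mathcal{U})\to\mathbb{R}$. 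Neither of these appears in your plan. Both do hold here — the square root property because $\mathfrak{a}_1$ is symmetric (Kato), and the Lipschitz continuity in $t$ precisely because $\mathfrak{B}\in W^{1,\infty}(\mathcal{U}_T)^d$ includes time regularity (this is where that hypothesis is used; $L^\infty(0,T;W^{1,\infty})$ alone would not do) — but they must be verified, since without them the theorem you are citing does not apply. Note also that this forces the boundary term into $\mathfrak{a}_1$ and the interior first-order terms into $\mathfrak{a}_2$, exactly as the paper does; your phrasing blurs this split. The remaining step the paper carries out in detail — showing that the abstract solution $Y$ of the problem governed by the form operator $\mathcal{A}(t)$ satisfies $Y(t)\in D(A(t))$ and $A(t)Y(t)=\mathcal{A}(t)Y(t)$, i.e.\ that the Robin condition $\varepsilon\partial_\n Y(t)=(\hat{\mathfrak{B}}\cdot\n)Y(t)$ holds in the weak sense \eqref{dn} — you correctly identify as an application of the weak Green formula, so that part of your plan is fine.
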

\begin{proof} 
	To prove the existence and uniqueness of a strong solution of \eqref{s1}, we apply Theorem 4.2 and Remark 4.6 of \cite{arendt2014maximal}, so 
	we consider the bilinear forms defined on $[0,\tau]\times H_{\Gamma_{0}}^{1}(\mathcal{U})\times H_{\Gamma_{0}}^{1}(\mathcal{U})$ by 
	\begin{eqnarray*}
		&&\mathfrak{a}_{1}(t,u,v):=\varepsilon\int_{\mathcal{U}}\nabla u\cdot\nabla v\mathrm{~d}x-\int_{\Gamma}(
		\hat{\mathfrak{B}}(x,t)\cdot\n(x))u\;v\mathrm{~d}\sigma, \\
		&&\mathfrak{a}_{2}(t,u,v):= \int_{\mathcal{U}}(\hat{\mathfrak{B}}(x,t)\cdot\nabla u)v\mathrm{~d}x+\int_{\mathcal{U}}(\nabla\cdot\hat{\mathfrak{B}}(x,t))u\;v\mathrm{~d}x.
	\end{eqnarray*}
	Clearly, we have 
	\begin{eqnarray*}
		\mathfrak{a}_{1}(t,u,v)+\mathfrak{a}_{2}(t,u,v)=\mathfrak{a}(t,u,v)
	\end{eqnarray*}
	and we claim that, $\mathfrak{a}_{1}$ and $\mathfrak{a}_{2}$ satisfies the conditions:
	\begin{itemize}
		\item $|\mathfrak{a}_{1}(t,u,v)|\leq M_{1}\|u\|_{H^{1}(\mathcal{U})}\|v\|_{H^{1}(\mathcal{U})},\;\mbox{for all}\; u,v \in H^{1}_{\Gamma_{0}}(\mathcal{U})$ and all $t\in [0,\tau]$;   
		\item $\mathfrak{a}_{1}$ is quasi-coercive, see \eqref{quasi-coercive};
		\item  $\mathfrak{a}_{1}$ satisfies the square root property; i.e., $R\left(A_{1}(t)^{-1/2}\right)=H_{\Gamma_{0}}^{1}(\mathcal{U})$;
		\item $\mathfrak{a}_{1}$ is Lipschitz-continuous; i.e., there exists a constant $C_{1}\geq 0$ such that, for all $u,v\in H^{1}_{\Gamma_{0}}(\mathcal{U})$ and all $s,t\in [0,\tau]$,
		\begin{eqnarray*}
			|\mathfrak{a}_{1}(t,u,v)-\mathfrak{a}_{1}(s,u,v)|\leq C_{1}|t-s|\|u\|_{H^{1}(\mathcal{U})}\|v\|_{H^{1}(\mathcal{U})};
		\end{eqnarray*}
		\item $|\mathfrak{a}_{2}(t,u,v)|\leq M_{2}\|u\|_{H^{1}(\mathcal{U})}\|v\|_{L^{2}(\mathcal{U})}\;\mbox{for all}\; (u,v)\in H^{1}_{\Gamma_{0}}(\mathcal{U})\times L^{2}(\mathcal{U})$ and all $t\in [0,\tau]$;
		\item $t\longmapsto \mathfrak{a}_{2}(t,u,v)$ is measurable for all $u, v\in H^{1}_{\Gamma_{0}}(\mathcal{U})$.
	\end{itemize}
	By the boundedness of $\mathfrak{B}$ and the continuity of the trace operator, the form $\mathfrak{a}_{1}$ is $H^{1}(\mathcal{U})$-bounded. Since $\mathfrak{a}_{1}$ is symmetric, then it satisfies the square root property, see \cite{kato2013perturbation}. Using $u=0$ on $\Gamma_{0}$, the trace estimate \eqref{Trace estimate} and Young's inequality, we obtain
	\begin{eqnarray*}
		\left|\int_{\Gamma}(\hat{\mathfrak{B}}(x,t)\cdot\n (x))|u|^{2}\mathrm{~d}\sigma\right|\leq \frac{\varepsilon}{2}\|\nabla u\|^{2}_{L^{2}(\mathcal{U})}+ \frac{C}{\varepsilon}\| u\|^{2}_{L^{2}(\mathcal{U})}.
	\end{eqnarray*}
	Hence $\mathfrak{a}_{1}$ is quasi-coercive. By the Lipschitz continuous of $\mathfrak{B}$, the form $\mathfrak{a}_{1}$ is also Lipschitz continuous. 
	The boundedness of $\mathfrak{B}$ implies the form $\mathfrak{a}_{2}: H_{\Gamma_{0}}^{1}(\mathcal{U})\times L^{2}(\mathcal{U})\longrightarrow\mathbb{R}$ is bounded for all fixed $t\in [0,\tau]$. We also have that $t\longmapsto \mathfrak{a}_{2}(t,u,v)$ is measurable for all $u,v\in H_{\Gamma_{0}}^{1}(\mathcal{U})$ as for the form $\mathfrak{a}_{w}$ above.\\
	Consequently, \cite[Theorem 4.2 and Remark 4.6]{arendt2014maximal} implies that the Cauchy problem
	\begin{equation}\label{ccal1}
		\left\{
		\begin{aligned}
			Y^{\prime}+\mathcal{A}(t)Y &=\hat{F}(\cdot,t) & & t\in [0,\tau], \\
			Y(0)&=G, 
		\end{aligned}
		\right.
	\end{equation}
	has a unique strong solution $Y\in MR_{\mathfrak{a}}(0,\tau)$. Furthermore 
	\begin{equation}
		\|Y\|_{MR_{\mathfrak{a}}(0,\tau)}\leqslant C\left(\|\hat{F}\|_{L^{2}(0,\tau;L^{2}(\mathcal{U}))}+\|G\|_{H^{1}(\mathcal{U})}\right). \label{eec}
	\end{equation}
	Let us then show that \eqref{cau1} has a unique strong solution $Y\in MR_{\mathfrak{a}}(0,\tau)$.
	That is, we will show if $Y\in MR_{\mathfrak{a}}(0,\tau)$ the strong solution of \eqref{ccal1}, then $Y(t)\in D(A(t))\;\text{and}\; A(t)(Y(t))=\mathcal{A}(t)(Y(t))$ for all $t\in [0,\tau]$.\\
	For all $v\in H_{\Gamma_{0}}^{1}(\mathcal{U})$, the strong solution of \eqref{ccal1} satisfies
	\begin{eqnarray*}
		\int_{\mathcal{U}}Y^{\prime}(t)v\mathrm{~d}x+\int_{\mathcal{U}}\mathcal{A}(t)Y(t)v\mathrm{~d}x=\int_{\mathcal{U}}\hat{F}v\mathrm{~d}x.
	\end{eqnarray*}
	Then,  
	\begin{eqnarray}
		&&\int_{\mathcal{U}}Y^{\prime}(t)v\mathrm{~d}x+\varepsilon\int_{\mathcal{U}}\nabla Y(t)\cdot\nabla v \mathrm{~d}x-\int_{\Gamma}\hat{\mathfrak{B}}(x,t)\cdot \n(x) Y(t)v\mathrm{~d}\sigma \label{cc4}\\ 
		&& +\int_{\mathcal{U}}(\hat{\mathfrak{B}}(x,t)\cdot\nabla Y(t))v\mathrm{~d}x +\int_{\mathcal{U}}\nabla\cdot \hat{\mathfrak{B}}(x,t) Y(t)v\mathrm{~d}x=\int_{\mathcal{U}}\hat{F}v\mathrm{~d}x. \nonumber
	\end{eqnarray}
	In particular for all $v\in \mathcal{D}(\mathcal{U})$, we obtain
	\begin{eqnarray*}
		\int_{\mathcal{U}}  \left[Y^{\prime}(t)+\hat{\mathfrak{B}}(x,t)\cdot\nabla Y(t)+\nabla\cdot \hat{\mathfrak{B}}(x,t) Y(t)-\hat{F}\right]v\mathrm{~d}x
		= -\varepsilon\int_{\mathcal{U}}\nabla Y(t)\cdot\nabla v \mathrm{~d}x,
	\end{eqnarray*}
	for all $v\in D(\mathcal{U})$, therefore $\Delta Y(t)\in L^{2}(\mathcal{U})$ and 
	%\begin{eqnarray*}
	%	\varepsilon\Delta Y(t)=Y^{\prime}(t)+\hat{\mathfrak{B}}(x,t)\cdot\nabla Y(t)+\nabla\cdot \hat{\mathfrak{B}}(x,t)Y(t)-\hat{F}.
	%\end{eqnarray*}
	%Thus 
	\begin{eqnarray}\label{cc5}
		Y^{\prime}(t)=\varepsilon\Delta Y(t)-\hat{\mathfrak{B}}(x,t)\cdot\nabla Y(t)-\nabla\cdot \hat{\mathfrak{B}}(x,t) Y(t)+\hat{F}.
	\end{eqnarray}
	Substituting \eqref{cc5} into \eqref{cc4} gives
	\begin{eqnarray*}
		\forall v\in H_{\Gamma_{0}}^{1}(\mathcal{U}),\; \varepsilon\int_{\mathcal{U}}\Delta Y(t)v\mathrm{~d}x+\varepsilon\int_{\mathcal{U}}\nabla Y(t)\cdot\nabla v \mathrm{~d}x=\int_{\Gamma}\hat{\mathfrak{B}}(x,t)\cdot\n(x)Y(t) v\mathrm{~d}\sigma.
	\end{eqnarray*}
	Then $\partial_{\n}Y(t)|_{\Gamma}\in L^{2}(\Gamma)$ and $\varepsilon\partial_{\n}Y(t)|_{\Gamma}=\hat{\mathfrak{B}}(x,t)\cdot\n(x)Y(t)|_{\Gamma}.$ Consequently $Y(t)\in D(A(t))$. By a simple integration by parts, we have
	\begin{eqnarray*}
		\left( A(t)(Y(t)),v\right)=\mathfrak{a}(t,Y(t),v),
	\end{eqnarray*}
	for all  $v\in H_{\Gamma_{0}}^{1}(\mathcal{U})$, then $A(t)(Y(t))=\mathcal{A}(t)(Y(t))$. Finally, the Cauchy problem \eqref{cau1}, and hence system \eqref{s3} has a unique strong solution $\varphi\in MR_{\mathfrak{a}}(t_{1},t_{2})$.
    From Proposition \ref{embedded} and \eqref{eec}, we obtain $\varphi\in\mathcal{C}([t_{1},t_{2}];H_{\Gamma_{0}}^{1}(\mathcal{U}))$ and \eqref{ee}. 
\end{proof}
\begin{remark}
	Propositions \ref{pw} and \ref{ps} are valid if $\Omega_{0}=\varnothing$.
\end{remark}
\section{Agmon inequalities, Dissipation results and Carleman estimate} \label{Section 4}
In this section, we present important estimates that are key to proving essential results.
\subsection{Agmon inequalities}
In this subsection, we wil present some technical results and we will prove some new Agmon inequalities which will be the key to establish very interesting dissipativity estimates.
\par
Let us start with the following notation, which will be useful in what follows : \\
\textbf{Notation.}
Let $0\leqslant t_{1}\leqslant t_{2}\leqslant T$ and $x_{0}\in\mathbb{R}^{d}$. For $r>0$, we note $\mathcal{D}_{r}(t_{1},t_{2},x_{0})$ the union of
trajectories starting at $t_{2}$ in the ball $\overline{B}(x_{0},r)$:
$$\mathcal{D}_{r}(x_{0},t_{1},t_{2})=\left\lbrace (\Phi(t,t_{2},y),t)\;:\; y\in\overline{B}(x_{0},r)\;\;\mbox{and}\;\; t\in [t_{1},t_{2}]\right\rbrace,$$ 
where $t\longmapsto \Phi(t,\cdot,\cdot)$ are the trajectories of ordinary equation \eqref{OD}.
\par
The following Lemma asserts the existence of a Lipschitz function that verifies certain conditions associated with the trajectories of vector $\mathfrak{B}$, the construction of this function is based on the change of coordinates by the trajectories of the vector field $\mathfrak{B}$ and the use of radial functions. For more details, see \cite[Section 2.2]{guerrero2007singular}.

\begin{lemma} \label{lemma}
	Let $\mathfrak{B}\in L^{\infty}(0,T;W^{1,\infty}(\mathbb{R}^{d})^{d})$, then for all $0\leqslant t_{1}\leqslant t_{2}\leqslant T$, $x_{0}\in\mathbb{R}^{d}$ and all $r>0$
	there exists a nonnegative Lipschitz function $\theta$ on $\mathbb{R}^{d}\times [t_{1},t_{2}]$ such that 
	\begin{eqnarray*}
		\partial_{t}\theta-|\nabla \theta|^2 +\mathfrak{B}(x,t)\cdot \nabla \theta &\geq 0& \quad\quad \text {a.e in } \mathbb{R}^{d} \times[t_{1},t_{2}],\\
		\theta(x,t)&=&0\quad\quad \forall (x,t)\in \mathcal{D}_{r}(x_{0},t_{1},t_{2}),\\
		\theta(x,t)&\geq& c_{0}r^{2}\quad\forall (x,t)\notin \mathcal{D}_{2r}(x_{0},t_{1},t_{2}),
	\end{eqnarray*}
	where $c_{0}>0$ depends only on $t_{2}-t_{1}$ and $\displaystyle\int_{t_{1}}^{t_{2}}\|\nabla \mathfrak{B}(\cdot, s)\|_{\infty}\mathrm{~d}s$.\\
\end{lemma}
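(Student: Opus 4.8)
\emph{Proof plan.} The idea is to flatten the trajectories of $\mathfrak{B}$ by its flow and thereby reduce the differential inequality for $\theta$ to a Hamilton--Jacobi-type inequality in the straightened variables, which is then solved by a radial ansatz on the tube. For $(x,t)\in\mathbb{R}^{d}\times[t_{1},t_{2}]$ put $Y(x,t):=\Phi(t_{2},t,x)$, the position at time $t_{2}$ of the trajectory of \eqref{OD} through $(x,t)$; by the previous lemma and standard flow bounds (Grönwall applied to the variational equation), $Y$ is Lipschitz on $\mathbb{R}^{d}\times[t_{1},t_{2}]$ and $\|\nabla_{x}Y(x,t)\|\le M:=\exp\!\big(\int_{t_{1}}^{t_{2}}\|\nabla\mathfrak{B}(\cdot,s)\|_{\infty}\,\mathrm{d}s\big)$ for a.e.\ $(x,t)$. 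Since $Y$ is constant along trajectories, $\partial_{t}Y+(\nabla_{x}Y)\mathfrak{B}=0$ a.e. I then look for $\theta(x,t)=\psi(Y(x,t),t)$; the chain rule gives, a.e.,
\begin{equation*}
 \partial_{t}\theta+\mathfrak{B}\cdot\nabla_{x}\theta=(\partial_{t}\psi)(Y(x,t),t),\qquad |\nabla_{x}\theta|^{2}\le M^{2}\,|(\nabla_{z}\psi)(Y(x,t),t)|^{2},
\end{equation*}
so it suffices to build $\psi\ge0$, Lipschitz on $\mathbb{R}^{d}\times[t_{1},t_{2}]$, with $\partial_{t}\psi\ge M^{2}|\nabla_{z}\psi|^{2}$ a.e., $\psi=0$ on $\overline{B}(x_{0},r)\times[t_{1},t_{2}]$, and $\psi\ge c_{0}r^{2}$ on $(\mathbb{R}^{d}\setminus\overline{B}(x_{0},2r))\times[t_{1},t_{2}]$. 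Indeed, using the flow composition property one checks $Y(x,t)\in\overline{B}(x_{0},a)\iff(x,t)\in\mathcal{D}_{a}(x_{0},t_{1},t_{2})$ for $a\in\{r,2r\}$, and these conditions on $\psi$ translate back exactly to the required support and size conditions on $\theta$ (and $\theta\ge0$).

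For $\psi$ I would use a radial ansatz adapted to the tube. Let $\rho(z):=\mathrm{dist}(z,\overline{B}(x_{0},r))=(|z-x_{0}|-r)_{+}$ (which is $1$-Lipschitz, vanishes on $\overline{B}(x_{0},r)$, and is $>r$ off $\overline{B}(x_{0},2r)$), fix $\delta:=r^{2}/(4(t_{2}-t_{1}))$, and set $\psi(z,t):=\eta\big(\rho(z)^{2}-\delta(t_{2}-t)\big)$, where $\eta(s):=c_{0}r^{2}\,\tilde\eta(s/r^{2})$ with $\tilde\eta\in C^{1}(\mathbb{R})$ a fixed nondecreasing profile, $\tilde\eta\equiv0$ on $(-\infty,0]$, $\tilde\eta\equiv1$ on $[1/2,\infty)$, so $0\le\eta'\le Lc_{0}$ with $L:=\sup\tilde\eta'$ universal. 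Then $\psi=0$ whenever $\rho(z)=0$ (the tube), while for $|z-x_{0}|\ge2r$ and any $t\in[t_{1},t_{2}]$ one has $\rho(z)^{2}-\delta(t_{2}-t)\ge r^{2}-\delta(t_{2}-t_{1})=\tfrac{3}{4}r^{2}\ge\tfrac{1}{2}r^{2}$, hence $\psi=c_{0}r^{2}$. For the differential inequality, a.e.\ $\partial_{t}\psi=\delta\eta'$ and $|\nabla_{z}\psi|^{2}\le4\rho(z)^{2}(\eta')^{2}$; on $\{\eta'\neq0\}$ the argument lies in $[0,r^{2}/2]$, so $\rho(z)^{2}\le r^{2}/2+\delta(t_{2}-t_{1})=\tfrac{3}{4}r^{2}$, whence $\partial_{t}\psi-M^{2}|\nabla_{z}\psi|^{2}\ge\eta'\big(\delta-3LM^{2}r^{2}c_{0}\big)$. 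Choosing
\begin{equation*}
 c_{0}:=\frac{1}{12\,L\,M^{2}\,(t_{2}-t_{1})}=\frac{1}{12L(t_{2}-t_{1})}\exp\!\Big(-2\!\int_{t_{1}}^{t_{2}}\|\nabla\mathfrak{B}(\cdot,s)\|_{\infty}\,\mathrm{d}s\Big)
\end{equation*}
makes the bracket nonnegative, and $c_{0}$ depends only on $t_{2}-t_{1}$ and $\int_{t_{1}}^{t_{2}}\|\nabla\mathfrak{B}(\cdot,s)\|_{\infty}\,\mathrm{d}s$, as claimed. Finally $\nabla\psi$ vanishes where $\eta'$ does and on $\{\eta'\neq0\}$ the value $\rho(z)$ is bounded, so $\psi$ is globally Lipschitz, hence so is $\theta=\psi(Y(\cdot,\cdot),\cdot)$; mollifying $\eta$ if necessary legitimizes the chain rule used above at a.e.\ point (Rademacher for $Y$).

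The delicate point is the joint tuning of the two scales hidden in the ansatz. The $t$-dependence must be carried entirely inside $\rho(z)^{2}-\delta(t_{2}-t)$, not through an additive function of $t$ alone: otherwise $\theta$ could vanish only at the terminal time, not on the whole tube $\mathcal{D}_{r}(x_{0},t_{1},t_{2})$. At the same time $\delta$ must be large enough (relative to $M^{2}c_{0}r^{2}$) to dominate $|\nabla_{z}\psi|^{2}$, yet small enough (relative to $r^{2}/(t_{2}-t_{1})$) that at the earliest time $t_{1}$ the quantity $\rho(z)^{2}-\delta(t_{2}-t_{1})$ still exceeds the plateau threshold $\tfrac{1}{2}r^{2}$ off $\overline{B}(x_{0},2r)$. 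Reconciling these two requirements is precisely what pins down the stated value of $c_{0}$; the remaining ingredients — the flow estimates, the change of variables, and the Lipschitz bookkeeping — are routine.
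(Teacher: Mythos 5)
Your construction — straightening the trajectories via $Y(x,t)=\Phi(t_{2},t,x)$, reducing to the transport‑free inequality $\partial_{t}\psi\ge M^{2}|\nabla_{z}\psi|^{2}$, and solving it with a radial profile on the tube — is correct and is essentially the same argument the paper invokes, since it does not prove the lemma itself but refers to the change of coordinates by the flow and the use of radial functions in Section 2.2 of Guerrero–Lebeau. Your bookkeeping of the two scales ($\delta$ versus $c_{0}$) and the resulting value of $c_{0}$ depending only on $t_{2}-t_{1}$ and $\int_{t_{1}}^{t_{2}}\|\nabla\mathfrak{B}(\cdot,s)\|_{\infty}\,\mathrm{d}s$ matches what the lemma requires.
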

Now we are ready to present and prove some Agmon inequalities. 
\begin{proposition} \label{propagmon}
	Let $\Omega$ be a domain with Lipschitz boundary $\Gamma$, $\mathfrak{B}\in L^{\infty}(0,T;W^{1,\infty}(\Omega)^{d})$ and
let $\theta$ be a Lipschitz function on $\overline{\Omega} \times[0, T]$ such that
	\begin{equation*}
		\partial_{t}\theta -|\nabla \theta|^2 +\mathfrak{B}(x,t)\cdot \nabla \theta \geq 0,\;\;  \text { a.e in } \overline{\Omega} \times[0, T].
	\end{equation*}
	Then, we have the following estimates:
	\begin{enumerate}[label=(\arabic*), ref=(\arabic*)]
		\item There exists a constant $C>0$ (independent of $\varepsilon$) such that, for all $\varepsilon\in (0,1)$ and
		any solution $\varphi$ of system  $\mathcal{S}(\Omega_{0},t_{1},t_{2},0, G,\varepsilon,\mathfrak{B})$ with data $G\in L^{2}(\mathcal{U})$, the following Agmon-type inequality holds true for all $t \in[t_{1},t_{2}]$,
		\begin{eqnarray}
			&&\exp\left(-\frac{C}{\varepsilon}(t_{2}-t)\right)\int_{\mathcal{U}}|\psi(x, t)|^2 \mathrm{~d} x +\varepsilon \int_t^{t_{2}} \int_{\mathcal{U}} \exp\left(-\frac{C}{\varepsilon}(t_{2}-s)\right)|\nabla \psi(x, s)|^2 \mathrm{~d} x \mathrm{~d} s \leq \int_{\mathcal{U}}|\psi(x, t_{2})|^2 \mathrm{~d}x, \label{A1}
		\end{eqnarray}
		where $ \psi=\exp\left(\frac{\theta}{\varepsilon}\right)\varphi$ and $\mathcal{U}=\Omega\setminus\overline{\Omega_{0}}$.\\
		\item If moreover, $\mathfrak{B}(x,t)\cdot\mathbf{n}(x)\geq 0$ on $\Gamma_{T}$, then 
		\begin{eqnarray} 
			&&\exp\left(-C_{\mathfrak{B}}(t_{2}-t)\right)\int_{\mathcal{U}}|\psi(x, t)|^2 \mathrm{~d} x +2\varepsilon \int_t^{t_{2}} \int_{\mathcal{U}} \exp\left(-C_{\mathfrak{B}}(t_{2}-s)\right)|\nabla \psi(x, s)|^2 \mathrm{~d} x \mathrm{~d} s  \leq \int_{\mathcal{U}}|\psi(x, t_{2})|^2 \mathrm{~d}x, \label{A2}
		\end{eqnarray}
		where $C_{\mathfrak{B}}:=\|\nabla\cdot\mathfrak{B}\|_{L^{\infty}(\Omega_{T})}$.
	\end{enumerate}
\end{proposition}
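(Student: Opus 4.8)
I would derive both \eqref{A1} and \eqref{A2} from a single differential inequality for the weighted function $\psi=\exp(\theta/\varepsilon)\varphi$. Since $\psi$ only inherits the regularity of a weak solution, I would first establish the estimates for strong solutions, i.e.\ for data $G\in H^{1}_{\Gamma_{0}}(\mathcal{U})$, where Proposition \ref{ps} provides $\varphi\in MR_{\mathfrak a}(t_{1},t_{2})$ with $\Delta\varphi\in L^{2}(t_{1},t_{2};L^{2}(\mathcal{U}))$, a normal derivative $\partial_{\n}\varphi|_{\Gamma}\in L^{2}(\Gamma)$ satisfying the Robin relation $\varepsilon\partial_{\n}\varphi=-(\mathfrak{B}\cdot\n)\varphi$ on $\Gamma$, and $\varphi=0$ on $\Gamma_{0}$. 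The general case $G\in L^{2}(\mathcal{U})$ then follows by density: approximate $G$ in $L^{2}(\mathcal{U})$ by $H^{1}_{\Gamma_{0}}(\mathcal{U})$-functions, use Proposition \ref{pw} (which for fixed $\varepsilon$ gives convergence of the approximating solutions in $\mathcal{C}([t_{1},t_{2}];L^{2}(\mathcal{U}))\cap L^{2}(t_{1},t_{2};H^{1}(\mathcal{U}))$), and note that $\exp(\theta/\varepsilon)$ is bounded and Lipschitz, so both sides of \eqref{A1}--\eqref{A2} pass to the limit.

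\textbf{The energy identity.} For a strong solution I would multiply $\partial_{t}\varphi+\varepsilon\Delta\varphi+\nabla\cdot(\varphi\mathfrak{B})=0$ by the admissible test function $2\exp(2\theta/\varepsilon)\varphi\in L^{2}(t_{1},t_{2};H^{1}_{\Gamma_{0}}(\mathcal{U}))$ and integrate over $\mathcal{U}$. Using $\exp(2\theta/\varepsilon)|\varphi|^{2}=|\psi|^{2}$ and the relation $\exp(\theta/\varepsilon)\nabla\varphi=\nabla\psi-\varepsilon^{-1}(\nabla\theta)\psi$ (the product rule for $H^{1}\times\text{Lipschitz}$ being legitimate because $\theta$ is Lipschitz), one integration by parts in the Laplacian term and one in the transport term give, after collecting terms,
\begin{multline*}
\frac{d}{dt}\int_{\mathcal{U}}|\psi|^{2}\,\d x=2\varepsilon\int_{\mathcal{U}}|\nabla\psi|^{2}\,\d x+\frac{2}{\varepsilon}\int_{\mathcal{U}}\big(\partial_{t}\theta-|\nabla\theta|^{2}+\mathfrak{B}\cdot\nabla\theta\big)|\psi|^{2}\,\d x-\int_{\mathcal{U}}(\nabla\cdot\mathfrak{B})|\psi|^{2}\,\d x\\
-2\varepsilon\int_{\partial\mathcal{U}}\exp(2\theta/\varepsilon)\,\varphi\,\partial_{\n}\varphi\,\d\sigma-\int_{\partial\mathcal{U}}(\mathfrak{B}\cdot\n)|\psi|^{2}\,\d\sigma.
\end{multline*}
On $\Gamma_{0}$ the boundary integrals vanish since $\psi=0$ there; on $\Gamma$, the Robin condition turns $-2\varepsilon\int_{\Gamma}\exp(2\theta/\varepsilon)\varphi\,\partial_{\n}\varphi\,\d\sigma$ into $2\int_{\Gamma}(\mathfrak{B}\cdot\n)|\psi|^{2}\,\d\sigma$, which combines with $-\int_{\Gamma}(\mathfrak{B}\cdot\n)|\psi|^{2}\,\d\sigma$ to leave exactly $\int_{\Gamma}(\mathfrak{B}\cdot\n)|\psi|^{2}\,\d\sigma$. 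Because $\theta$ satisfies $\partial_{t}\theta-|\nabla\theta|^{2}+\mathfrak{B}\cdot\nabla\theta\geq0$ a.e., the second term on the right is nonnegative and can be dropped, yielding
\[
\frac{d}{dt}\int_{\mathcal{U}}|\psi|^{2}\,\d x\geq 2\varepsilon\int_{\mathcal{U}}|\nabla\psi|^{2}\,\d x-\int_{\mathcal{U}}(\nabla\cdot\mathfrak{B})|\psi|^{2}\,\d x+\int_{\Gamma}(\mathfrak{B}\cdot\n)|\psi|^{2}\,\d\sigma.
\]

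\textbf{Closing the two estimates.} Set $y(t)=\int_{\mathcal{U}}|\psi(x,t)|^{2}\,\d x$. For \eqref{A2}, the hypothesis $\mathfrak{B}\cdot\n\geq0$ on $\Gamma_{T}$ lets me discard the (nonnegative) boundary integral and bound $-\int_{\mathcal{U}}(\nabla\cdot\mathfrak{B})|\psi|^{2}\geq-C_{\mathfrak{B}}\,y(t)$ with $C_{\mathfrak{B}}=\|\nabla\cdot\mathfrak{B}\|_{L^{\infty}(\Omega_{T})}$, so that $y'(t)\geq 2\varepsilon\int_{\mathcal{U}}|\nabla\psi(\cdot,t)|^{2}-C_{\mathfrak{B}}\,y(t)$; multiplying by $\exp(C_{\mathfrak{B}}t)$, integrating over $[t,t_{2}]$ and multiplying back by $\exp(-C_{\mathfrak{B}}t_{2})$ produces \eqref{A2}. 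For \eqref{A1}, without a sign assumption I would bound $\int_{\Gamma}(\mathfrak{B}\cdot\n)|\psi|^{2}\geq-\|\mathfrak{B}\|_{L^{\infty}(\Omega_{T})}\int_{\Gamma}|\psi|^{2}\,\d\sigma$, apply the trace estimate \eqref{Trace estimate} on $\mathcal{U}$ and Young's inequality to get $\int_{\Gamma}|\psi|^{2}\,\d\sigma\leq C\delta\|\nabla\psi\|_{L^{2}(\mathcal{U})}^{2}+C(\delta+\delta^{-1})\|\psi\|_{L^{2}(\mathcal{U})}^{2}$ for any $\delta>0$, and then choose $\delta$ proportional to $\varepsilon$: the gradient part is absorbed into $2\varepsilon\int_{\mathcal{U}}|\nabla\psi|^{2}$, and (using $\varepsilon\in(0,1)$) the residual zero-order term is at most $C\varepsilon^{-1}\|\psi\|_{L^{2}(\mathcal{U})}^{2}$. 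Together with $-\int_{\mathcal{U}}(\nabla\cdot\mathfrak{B})|\psi|^{2}\geq-C_{\mathfrak{B}}\,y(t)$ this gives $y'(t)\geq\varepsilon\int_{\mathcal{U}}|\nabla\psi(\cdot,t)|^{2}-C\varepsilon^{-1}y(t)$ with $C$ independent of $\varepsilon$, and the integrating factor $\exp(C\varepsilon^{-1}t)$ yields \eqref{A1}.

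\textbf{Main obstacle.} The delicate point is the bookkeeping of the boundary terms: one must verify that the contributions of the $\varepsilon\Delta\varphi$-term and of the transport term on $\Gamma$ combine, through the Robin condition, into the single integral $\int_{\Gamma}(\mathfrak{B}\cdot\n)|\psi|^{2}\,\d\sigma$ with the sign that renders it harmless when $\mathfrak{B}\cdot\n\geq0$, and that in the general case this indefinite term is absorbable by the diffusion only at the price of the $C/\varepsilon$ rate in \eqref{A1} (so the two estimates genuinely differ). A secondary technical issue is justifying all the integrations by parts for the merely Lipschitz weight $\theta$ and the merely $L^{2}$ datum $G$, which is precisely why the argument is first carried out for strong solutions and then extended by density.
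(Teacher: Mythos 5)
Your proposal is correct and follows essentially the same route as the paper: the same weighted energy identity for $\psi=\exp(\theta/\varepsilon)\varphi$ (your display agrees with the paper's \eqref{en} up to the factor $2$ from using $\int|\psi|^2$ instead of $E(t)=\tfrac12\int|\psi|^2$), followed by dropping the nonnegative boundary term for \eqref{A2} and absorbing it via the trace estimate \eqref{Trace estimate} with Young's inequality at scale $\delta\sim\varepsilon$ for \eqref{A1}, then Grönwall. The only difference is that you make explicit the strong-solution-plus-density justification of the integrations by parts, which the paper leaves implicit.
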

\begin{proof}
	For all $t\in [t_{1},t_{2}]$, we consider the energy $E(t):=\dfrac{1}{2}\displaystyle\int_{\mathcal{U}}|\psi(x,t)|^{2}dx$. By several integrations by parts, one has 
	\begin{eqnarray}
		E^{\prime}(t)&=& \frac{1}{\varepsilon}\int_{\mathcal{U}}\left(\partial_{t}\theta-\lvert\nabla\theta\lvert^{2}+\mathfrak{B}(x,t)\cdot\nabla\theta\right)|\psi|^{2}\mathrm{~d}x+\varepsilon\int_{\mathcal{U}}\lvert \nabla \psi\lvert^{2}\mathrm{~d}x \nonumber \\
        &&-\frac{1}{2}\int_{\mathcal{U}}\nabla\cdot \mathfrak{B}(x,t)|\psi|^{2}\mathrm{~d}x  +\frac{1}{2}\int_{\Gamma}\mathfrak{B}(x,t)\cdot\n(x)|\psi|^{2} \mathrm{~d}\sigma. \label{en} 
	\end{eqnarray}
	\begin{enumerate}[label=(\arabic*), ref=(\arabic*)]
		\item By the hypothesis verified by the function $\theta$, we have  
		\begin{eqnarray}
			E^{\prime}(t)&\geq& \varepsilon\int_{\mathcal{U}}\lvert \nabla \psi\lvert^{2}\mathrm{~d}x-\|\nabla\cdot\mathfrak{B}\|_{L^{\infty}(\Omega_{T})}E(t)-\frac{\|\mathfrak{B}\|_{L^{\infty}(\Gamma_{T})}}{2}\int_{\Gamma}|\psi|^{2} \mathrm{~d}\sigma. \nonumber
		\end{eqnarray}
		By trace estimate \eqref{Trace estimate}
		and Young’s inequality, we obtain
		\begin{eqnarray}
			E^{\prime}(t)&\geq& \frac{\varepsilon}{2}\int_{\mathcal{U}}\lvert \nabla \psi\lvert^{2}\mathrm{~d}x-\left(\|\nabla\cdot \mathfrak{B}\|_{L^{\infty}(\Omega_{T})}+\varepsilon+\frac{C^{2}\|\mathfrak{B}\|^{2}_{L^{\infty}(\Gamma_{T})}}{4\varepsilon}\right)E(t). \nonumber
		\end{eqnarray}
		Thus, there exists a constant $C>0$ independent of $\varepsilon$ such that,
		\begin{eqnarray}
			E^{\prime}(t)&\geq& \frac{\varepsilon}{2}\int_{\mathcal{U}}\lvert \nabla \psi\lvert^{2}\mathrm{~d}x-\frac{C}{\varepsilon}E(t). \nonumber
		\end{eqnarray}	
		By applying Grönwall's lemma, we deduce inequality \eqref{A1}.
		\item If $\mathfrak{B}(x,t)\cdot\mathbf{n}(x)\geq 0$ on $\Gamma_{T}$, then \eqref{en} gives 
		\begin{eqnarray*}
			E^{\prime}(t)\geq \varepsilon\int_{\mathcal{U}}\lvert \nabla \psi\lvert^{2}\mathrm{~d}x-\frac{1}{2}\int_{\mathcal{U}}\nabla\cdot \mathfrak{B}(x,t)|\psi|^{2}\mathrm{~d}x.
		\end{eqnarray*}
		Hence
		\begin{eqnarray*}
			E^{\prime}(t)\geq \varepsilon\int_{\mathcal{U}}\lvert \nabla \psi\lvert^{2}\mathrm{~d}x-C_{\mathfrak{B}}E(t),
		\end{eqnarray*}
		where $C_{\mathfrak{B}}=\|\nabla\cdot\mathfrak{B}\|_{L^{\infty}(\Omega_{T})}$. The Grönwall lemma directly gives the inequality \eqref{A2}.
	\end{enumerate}
\end{proof}
Considering $\theta=0$ in the previous lemma, we obtain the following corollary:
\begin{corollary}
	Assume that $\mathfrak{B}\in L^{\infty}(0,T;W^{1,\infty}(\Omega)^{d})$ such that $\mathfrak{B}(x,t)\cdot\mathbf{n}(x)\geq 0$ on $\Gamma_{T}$.
	Then, any solution $\varphi$ of system  $\mathcal{S}(\Omega_{0},t_{1},t_{2},0,G,\varepsilon,\mathfrak{B})$ with data $G \in L^{2}(\mathcal{U})$  satisfies
	\begin{eqnarray} \label{A3}
		\int_{\mathcal{U}}|\varphi(x, t)|^2 \mathrm{~d} x\leq \exp\left(C_{\mathfrak{B}}(t_{2}-t)\right)\int_{\mathcal{U}}|\varphi(x, t_{2})|^2 \mathrm{~d}x,
	\end{eqnarray}
	where $C_{\mathfrak{B}}:=\|\nabla\cdot\mathfrak{B}\|_{L^{\infty}(\Omega_{T})}$.
\end{corollary}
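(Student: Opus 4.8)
The plan is to obtain inequality \eqref{A3} as an immediate specialization of Proposition \ref{propagmon}, part (2), by choosing the trivial weight $\theta \equiv 0$. First I would verify that $\theta = 0$ is admissible in Proposition \ref{propagmon}: it is Lipschitz on $\overline{\Omega}\times[0,T]$, and the required differential inequality $\partial_t \theta - |\nabla\theta|^2 + \mathfrak{B}(x,t)\cdot\nabla\theta \geq 0$ reduces to $0 \geq 0$, which holds everywhere. Since the corollary already assumes $\mathfrak{B}(x,t)\cdot\mathbf{n}(x) \geq 0$ on $\Gamma_T$, the hypotheses of part (2) are all met, so estimate \eqref{A2} applies to $\psi = \exp(\theta/\varepsilon)\varphi$.

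Next I would unpack what \eqref{A2} says with this choice. With $\theta \equiv 0$ we have $\psi = \varphi$ identically, so \eqref{A2} becomes, for any $t \in [t_1,t_2]$,
\begin{eqnarray*}
\exp\left(-C_{\mathfrak{B}}(t_{2}-t)\right)\int_{\mathcal{U}}|\varphi(x, t)|^2 \mathrm{~d} x +2\varepsilon \int_t^{t_{2}} \int_{\mathcal{U}} \exp\left(-C_{\mathfrak{B}}(t_{2}-s)\right)|\nabla \varphi(x, s)|^2 \mathrm{~d} x \mathrm{~d} s  \leq \int_{\mathcal{U}}|\varphi(x, t_{2})|^2 \mathrm{~d}x.
\end{eqnarray*}
Dropping the nonnegative gradient term on the left-hand side only weakens the inequality, giving
$$\exp\left(-C_{\mathfrak{B}}(t_{2}-t)\right)\int_{\mathcal{U}}|\varphi(x, t)|^2 \mathrm{~d} x \leq \int_{\mathcal{U}}|\varphi(x, t_{2})|^2 \mathrm{~d}x.$$
Multiplying both sides by $\exp(C_{\mathfrak{B}}(t_2-t)) > 0$ yields exactly \eqref{A3} with the stated constant $C_{\mathfrak{B}} = \|\nabla\cdot\mathfrak{B}\|_{L^\infty(\Omega_T)}$.

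There is essentially no obstacle here — the work has already been done inside the proof of Proposition \ref{propagmon}. The only things worth stating carefully in the write-up are: (i) that $\theta \equiv 0$ genuinely satisfies the Agmon-type admissibility condition (so that the proposition applies verbatim), and (ii) that discarding the dissipation term is legitimate because it is nonnegative. One could alternatively point out that the argument of part (2) runs directly: with $\psi = \varphi$, equation \eqref{en} reduces to $E'(t) \geq \varepsilon\int_{\mathcal{U}}|\nabla\varphi|^2\,\mathrm{d}x - \tfrac12\int_{\mathcal{U}}(\nabla\cdot\mathfrak{B})|\varphi|^2\,\mathrm{d}x \geq -C_{\mathfrak{B}}E(t)$ using $\mathfrak{B}\cdot\mathbf{n}\geq 0$ to kill the boundary term, and Grönwall's lemma integrated over $[t,t_2]$ closes it. Either framing gives a two-line proof, and I would present the first (invoke the proposition, drop the gradient term, rescale).
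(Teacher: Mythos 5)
Your proposal is correct and matches the paper's argument exactly: the paper obtains this corollary by setting $\theta=0$ in Proposition \ref{propagmon}, part (2), so that $\psi=\varphi$, and then \eqref{A3} follows from \eqref{A2} after discarding the nonnegative gradient term. Nothing further is needed.
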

\subsection{Dissipation results by Agmon inequality} \label{Dissipartion results}
In this subsection, we will assume that
\begin{enumerate}[label=(H\arabic*), ref=(H\arabic*)]
\item \label{A} $T>0$ and $\varepsilon\in (0,1)$,
\item \label{B} $\mathfrak{B}\in L^{\infty}(0,T;W^{1,\infty}(\Omega)^{d})$ such that $\mathfrak{B}(x,t)\cdot\mathbf{n}(x)\geq 0$ on $\Gamma_{T}$,
\item \label{C} $\exists T_{0}\in (0,T)$ and $r_{0}>0$ such that, $(T,T_{0},r_{0},\mathfrak{B},\Omega)$  satisfies condition \eqref{Flushing Condition} for the control region $\omega$
\end{enumerate}
and we will prove some very important dissipation results.
\par   
Applying Proposition \ref{P1} to Hypothesis \ref{C}, there exists $\omega_{0}\subset\subset\omega$ a regular open such that $(T,T_{0},\frac{r_{0}}{2},\mathfrak{B},\Omega)$ satisfies condition \eqref{Flushing Condition} for $\omega_{0}$. Hence,
\begin{eqnarray}
	&&\forall x_{0}\in\overline{\Omega},\; \forall t_{0}\in [T_{0},T],\; \exists t\in (t_{0}-T_{0},t_{0}),\; \forall x\in \overline{B}\left(x_{0},\frac{r_{0}}{2}\right)
	, \Phi(t,t_{0},x)\in \omega_{0}.\quad\quad \label{o4}
\end{eqnarray} 
In the following two subsections, $\mathcal{U}$ stands for the open:
\begin{equation*}
	\mathcal{U}:=\Omega\setminus\overline{\omega_{0}}.
\end{equation*} 
The assertion in \eqref{o4}, implies that 
\begin{eqnarray}
	&&\forall x_{0}\in\overline{\Omega},\; \forall t_{0}\in [T_{0},T],\; \exists t\in (t_{0}-T_{0},t_{0}),\; \forall x\in \overline{B}\left(x_{0},\frac{r_{0}}{2}\right),   \Phi(t,t_{0},x)\notin \overline{\mathcal{U}}. \quad\quad \label{o5}
\end{eqnarray}
In the next two parts of this subsection, we will show two dissipation results: the first applies outside the region 
$\omega_0$, while the second is global, with its proof relying on the first.
\subsubsection{Dissipation result outside the region $\omega_{0}$}
\begin{proposition} \label{P2}
	Under Hypotheses \ref{A}, \ref{B} and \ref{C}, there are constants $C_{0}>0$ dependent on $r_{0}, T_{0}$ and $\|\mathfrak{B}\|_{L^{\infty}(0,T;W^{1,\infty}(\Omega)^{d})}$ but independent of $\varepsilon$, and $C>0$ independent of $\varepsilon$ such that, for any $t_{0}\in [T_{0},T]$ and all weak solution $\varphi$ of $\mathcal{S}(\omega_{0},t_{0}-T_{0},t_{0},0, G,\varepsilon,\mathfrak{B})$ with data $G\in L^{2}(\mathcal{U})$ verify the following dissipation estimates:
	\begin{equation} \label{disspation 1}
		\|\varphi(\cdot, t_{0}-T_{0})\|^{2}_{L^{2}(\mathcal{U})}\leqslant C\exp\left(\frac{-C_{0}}{\varepsilon}\right)\|\varphi(\cdot, t_{0})\|^{2}_{L^{2}(\mathcal{U})}.
	\end{equation}
\end{proposition}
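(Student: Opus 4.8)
The plan is to exploit the flushing condition \eqref{o5}: every backward trajectory issued at time $t_0$ from a point of $\overline{\Omega}$ (and from a whole $\frac{r_0}{2}$–ball around it) leaves $\overline{\mathcal{U}}$ within time $T_0$. I would first cover $\overline{\Omega}$ by finitely many balls $B(x_0^i,\frac{r_0}{2})$, $i=1,\dots,I$, and for each index pick the escape time $\tau_i\in(t_0-T_0,t_0)$ and a radius so that for all $x\in\overline{B}(x_0^i,r_0)$ one has $\Phi(\tau_i,t_0,x)\notin\overline{\mathcal{U}}$; by continuity of the flow \eqref{f1} this stays true on a neighborhood, and the compactness argument already performed in the proof of Proposition~\ref{P1} lets me take these data uniformly in $t_0\in[T_0,T]$. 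For each $i$ I then invoke Lemma~\ref{lemma} on the time interval $[t_0-T_0,t_0]$ with center $x_0^i$ and radius $r_0/2$ to obtain a nonnegative Lipschitz weight $\theta_i$ on $\mathbb{R}^d\times[t_0-T_0,t_0]$ satisfying $\partial_t\theta_i-|\nabla\theta_i|^2+\mathfrak B\cdot\nabla\theta_i\ge 0$, with $\theta_i\equiv 0$ on $\mathcal D_{r_0/2}(x_0^i,t_0-T_0,t_0)$ and $\theta_i\ge c_0 r_0^2$ outside $\mathcal D_{r_0}(x_0^i,t_0-T_0,t_0)$; the constant $c_0$ depends only on $T_0$ and $\int\|\nabla\mathfrak B\|_\infty$, hence not on $\varepsilon$ nor on $t_0$.

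Next I would apply the Agmon inequality \eqref{A1} from Proposition~\ref{propagmon} to $\varphi$ with weight $\theta_i$, evaluated between $t=t_0-T_0$ and $t_2=t_0$, giving with $\psi_i=\exp(\theta_i/\varepsilon)\varphi$
\begin{equation*}
\exp\!\Big(-\tfrac{C}{\varepsilon}T_0\Big)\int_{\mathcal U}|\psi_i(x,t_0-T_0)|^2\,\mathrm dx\le \int_{\mathcal U}|\psi_i(x,t_0)|^2\,\mathrm dx .
\end{equation*}
On the right-hand side I use that $\theta_i=0$ on the trajectory tube $\mathcal D_{r_0/2}(x_0^i,t_0-T_0,t_0)$; in particular at time $t_0$ the tube is exactly $\overline B(x_0^i,r_0/2)$, so $\psi_i(\cdot,t_0)=\varphi(\cdot,t_0)$ on $\overline B(x_0^i,r_0/2)\cap\mathcal U$, while on the remaining part of $\mathcal U$ I only need $\theta_i\ge 0$, hence $|\psi_i(\cdot,t_0)|\le e^{\|\theta_i\|_\infty/\varepsilon}|\varphi(\cdot,t_0)|$ — this is too crude, so instead I bound the right-hand side by $\int_{\mathcal U}\exp(2\theta_i/\varepsilon)|\varphi(x,t_0)|^2\mathrm dx$ and note $\theta_i$ is bounded uniformly, which would only give a bad constant. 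The correct move is to observe that the flushing property forces the set where $\theta_i$ is \emph{small} at the final time to contain a fixed ball, and the set where it is large at the initial time to contain all of $\overline{\mathcal U}$: since $\Phi(\tau_i,t_0,x)\notin\overline{\mathcal U}$ for $x\in\overline B(x_0^i,r_0)$, a point of $\overline{\mathcal U}$ at time $t_0-T_0$ which reaches $\overline B(x_0^i,r_0/2)$ by time $t_0$ must have left the tube $\mathcal D_{r_0}(x_0^i,\cdot,t_0)$, so $\theta_i\ge c_0 r_0^2$ there. Thus on $\overline B(x_0^i,r_0/2)\cap\mathcal U$ at time $t_0-T_0$ we get $|\psi_i|^2\ge e^{2c_0 r_0^2/\varepsilon}|\varphi|^2$, while at time $t_0$ we keep $\psi_i=\varphi$ on $\overline B(x_0^i,r_0/2)$ and discard the rest by positivity after first noting $\theta_i$ can be taken $\le$ a constant $M$ on the part of $\mathcal U$ that matters; combining,
\begin{equation*}
\exp\!\Big(\tfrac{2c_0 r_0^2-2M}{\varepsilon}-\tfrac{C}{\varepsilon}T_0\Big)\int_{\overline B(x_0^i,r_0/2)\cap\mathcal U}|\varphi(x,t_0-T_0)|^2\,\mathrm dx\le \int_{\mathcal U}|\varphi(x,t_0)|^2\,\mathrm dx .
\end{equation*}

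Finally I would sum over $i=1,\dots,I$. Because $\{B(x_0^i,r_0/2)\}$ covers $\overline\Omega\supset\mathcal U$, the left-hand sides add up to control $\|\varphi(\cdot,t_0-T_0)\|_{L^2(\mathcal U)}^2$ up to the factor $I$, and the right-hand sides each equal $\|\varphi(\cdot,t_0)\|_{L^2(\mathcal U)}^2$, so I obtain \eqref{disspation 1} with $C=I e^{2M/\varepsilon}$ absorbed and $C_0$ essentially $\min_i c_0 r_0^2 - C T_0$ after choosing $T_0$ relative to $c_0 r_0^2$; more honestly, the exponent one extracts is $\frac{1}{\varepsilon}(2c_0 r_0^2 - CT_0)$, and since $c_0$ depends on $T_0$ one must verify this is a strictly positive multiple of $1/\varepsilon$, which is the delicate bookkeeping point. \textbf{The main obstacle} is exactly this: ensuring the gain $e^{2c_0 r_0^2/\varepsilon}$ from the flushing weight genuinely beats the loss $e^{CT_0/\varepsilon}$ coming from the Robin boundary term in \eqref{A1}, uniformly in $t_0\in[T_0,T]$ and in $\varepsilon\in(0,1)$ — this is where the hypothesis $\mathfrak B\cdot\mathbf n\ge 0$ versus the general case of \eqref{A1} will matter, and where one may need the sharper inequality \eqref{A2} (whose boundary term is absent) rather than \eqref{A1}; tracking which constants depend on $T_0$, $r_0$, $\|\mathfrak B\|_{W^{1,\infty}}$ but not on $\varepsilon$ or $t_0$ is the real content, the rest being the covering-and-summation routine above.
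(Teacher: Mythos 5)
Your proposal correctly identifies the three ingredients (the flushing condition, the weight $\theta$ of Lemma \ref{lemma}, an Agmon inequality), but the way you assemble them does not close, and you partly say so yourself. The decisive gap is that you apply the Agmon inequality to the full solution $\varphi$, whose final datum $G$ is spread over all of $\mathcal{U}$: the right-hand side $\int_{\mathcal U}|\psi_i(x,t_0)|^2\,\mathrm dx=\int_{\mathcal U}e^{2\theta_i/\varepsilon}|G|^2\,\mathrm dx$ then carries a loss $e^{2M/\varepsilon}$ with $M=\sup\theta_i\geq c_0r^2$, which cancels or exceeds the gain $e^{2c_0r^2/\varepsilon}$; the exponent $2c_0r_0^2-2M-CT_0$ you extract cannot be made positive by any choice of parameters. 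The paper's fix is to decompose the \emph{datum} rather than the domain of integration: write $G=\sum_j\chi_jG$ for a partition of unity subordinate to balls of radius $r_0/4$ centred at $x_j$, let $\varphi_j$ solve $\mathcal{S}(\omega_0,t_0-T_0,t_0,0,\chi_jG,\varepsilon,\mathfrak B)$, and take $\theta_j$ from Lemma \ref{lemma} with $r=r_0/4$ so that $\theta_j(\cdot,t_0)=0$ on the support of $\chi_j$. Then $\|\psi_j(\cdot,t_0)\|_{L^2(\mathcal U)}=\|\varphi_j(\cdot,t_0)\|_{L^2(\mathcal U)}$ exactly, with no exponential loss, and one recombines at the end via $\varphi=\sum_j\varphi_j$.

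A second error is where you harvest the gain. You assert that a point of $\overline{\mathcal U}$ at time $t_0-T_0$ whose trajectory reaches $\overline B(x_0^i,r_0/2)$ at time $t_0$ "must have left the tube"; it is the opposite: such a point is by definition in the tube $\mathcal D_{r_0/2}(x_0^i,t_0-T_0,t_0)$, where $\theta_i=0$. The flushing condition only guarantees that at some intermediate time $t(x_j,t_0)\in(t_0-T_0,t_0)$ the tube slice is disjoint from $\overline{\mathcal U}$, so that $\theta_j\geq c_0r_0^2/16$ on all of $\overline{\mathcal U}$ at that time; the tube may well re-enter $\overline{\mathcal U}$ by $t_0-T_0$. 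Accordingly the paper extracts the factor $e^{-c_0r_0^2/(16\varepsilon)}$ at $t(x_j,t_0)$ and then propagates from $t(x_j,t_0)$ down to $t_0-T_0$ with the plain unweighted estimate \eqref{A3}. Finally, as you suspect only at the very end, one must use \eqref{A2} (whose constant $e^{C_{\mathfrak B}T_0}$ is independent of $\varepsilon$, available precisely because $\mathfrak B\cdot\mathbf n\geq 0$), not \eqref{A1}, since the loss $e^{CT_0/\varepsilon}$ in \eqref{A1} would again compete with the gain and leave the sign of $C_0$ out of control. With these three corrections your covering-and-summation scheme becomes the paper's proof.
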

\begin{proof}
	Let $(x_{0},t_{0})\in\overline{\mathcal{U}}\times [T_{0},T]$. From \eqref{o5}, one has
	\begin{eqnarray}
		\exists t:=t(x_{0},t_{0})\in (t_{0}-T_{0},t_{0}),\; \forall x\in \overline{B}\left(x_{0},\frac{r_{0}}{2}\right),\; \Phi(t,t_{0},x)\notin \mathcal{\overline{\mathcal{U}}}. \label{d1}
	\end{eqnarray}
	Since $\overline{\mathcal{U}}$ is compact,then it admits a finite partition by the balls $B\left(x_{j},\frac{r_{0}}{2}\right)$, $j=1,\cdots,J$ and a partition of unity $\chi_{j}$ associated with this finite covering. For all $j=1,\cdots,J$, we consider $\theta_{j}$ the function that satisfies Lemma \ref{lemma} with the choice $x_{0}=x_{j}$, $t_{1}=t_{0}-T_{0}$, $t_{2}=t_{0}$ and $r=\frac{r_{0}}{4}$. Let $\varphi$ the weak solution of $\mathcal{S}(\omega_{0},t_{0}-T_{0},t_{0},0, G,\varepsilon,\mathfrak{B})$ and $\varphi_{j}$ the weak solution of $\mathcal{S}(\omega_{0},t_{0}-T_{0},t_{0},0,\chi_{j}G,\varepsilon,\mathfrak{B})$. By Agmon inequality \eqref{A2}, we obtain
	\begin{equation*}
		\| \psi_{j}(\cdot,t)\|_{L^{2}(\mathcal{U})}\leqslant \exp\left(\frac{C_{\mathfrak{B}}}{2}(t_{0}-t)\right)	\| \psi_{j}(\cdot,t_{0})\|_{L^{2}(\mathcal{U})},
	\end{equation*}
	for all $t\in [t_{0}-T_{0},t_{0}]$, where $\psi_{j}(\cdot, t)=\exp\left(\frac{\theta_{j}(\cdot, t)}{\varepsilon}\right)\varphi_{j}(\cdot,t)$. The properties of $\theta_{j}$ in Lemma \ref{lemma} and \eqref{d1} give$$	\begin{cases}
		\theta_{j}(x,t_{0})=0,\quad &\text{if}\;x\in \overline{B}\left(x_{j},\frac{r_{0}}{4}\right),\\
		\theta_{j}(x,t(x_{j},t_{0}))\geq \frac{c_{0}}{16}r_{0}^{2},\quad &\text{if}\; x\in\overline{\mathcal{U}}.
	\end{cases}$$
	Hence
	\begin{eqnarray}
		\| \varphi_{j}(\cdot,t(x_{j},t_{0}))\|_{L^{2}(\mathcal{U})}
		%&\leqslant & \exp\left(\frac{-c_{0}r_{0}^{2}}{4\varepsilon}\right)\exp\left(\frac{C_{\mathfrak{B}}}{2}(t_{0}-t(x_{j},t_{0}))\right)	\| \varphi_{j}(\cdot,t_{0})\|_{L^{2}(\mathcal{U})}\nonumber\\
		&\leqslant & \exp\left(\frac{-c_{0}r_{0}^{2}}{16\varepsilon}\right)\exp\left(\frac{C_{\mathfrak{B}}}{2}T_{0}\right)	\| \varphi_{j}(\cdot,t_{0})\|_{L^{2}(\mathcal{U})}. \label{d2}
	\end{eqnarray}
	Using Agmon inequality \eqref{A3}, we have 
	\begin{eqnarray}
		\| \varphi_{j}(\cdot,t_{0}-T_{0})\|_{L^{2}(\mathcal{U})}
		%&\leqslant & \exp\left(\frac{C_{\mathfrak{B}}}{2}\left(t(x_{j},t_{0})-(t_{0}-T_{0})\right)\right)\| \varphi_{j}(\cdot,t(x_{j},t_{0}))\|_{L^{2}(\mathcal{U})} \nonumber\\
		&\leqslant & \exp\left(\frac{C_{\mathfrak{B}}}{2}T_{0}\right)\| \varphi_{j}(\cdot,t(x_{j},t_{0}))\|_{L^{2}(\mathcal{U})}. \label{d3}
	\end{eqnarray}
	Based on \eqref{d2} and \eqref{d3}, we deduce that
	\begin{eqnarray} \label{d4}
		\| \varphi_{j}(\cdot,t_{0}-T_{0})\|_{L^{2}(\mathcal{U})}
		\leqslant \exp\left(\frac{-c_{0}r_{0}^{2}}{16\varepsilon}\right)\exp\left(C_{\mathfrak{B}}T_{0})\right)	\| \varphi_{j}(\cdot,t_{0})\|_{L^{2}(\mathcal{U})}.
	\end{eqnarray}
	The fact that the systems considered are linear and  $G=\displaystyle\sum_{j=1}^{J}\chi_{j}G$, we find
	\begin{equation*}
		\varphi(\cdot,t)=\sum_{j=1}^{J}\varphi_{j}(\cdot,t),\quad\text{for all}\;t_{0}-T_{0}\leqslant t\leqslant t_{0}.
	\end{equation*}
	Using this decomposition of $\varphi$ and \eqref{d4}, we then obtain the following:
	\begin{eqnarray*}
		\| \varphi(\cdot,t_{0}-T_{0})\|_{L^{2}(\mathcal{U})}
		\leqslant J\exp\left(\frac{-c_{0}r_{0}^{2}}{16\varepsilon}\right)\exp\left(C_{\mathfrak{B}}T_{0}\right)	\|\varphi(\cdot, t_{0})\|_{L^{2}(\mathcal{U})},
	\end{eqnarray*}
	which concludes estimate \eqref{disspation 1}. 
\end{proof}
\subsubsection{Global dissipation result}
\begin{proposition} \label{P3}
	Under Hypotheses \ref{A}, \ref{B} and \ref{C}, there is a constant $C>0$  independent of $\varepsilon$ such that, for any $\varphi$ solution of $\mathcal{S}(\varnothing,0,T,0, G,\varepsilon,\mathfrak{B})$ with data $G\in L^{2}(\Omega)$ satisfies the following dissipation estimates:
	\begin{enumerate}[label=(\arabic*), ref=(\arabic*)]
		\item For all $t_{0}\in [T_{0},T]$, we have
		\begin{equation} \label{disspation 2}
			\|\varphi(\cdot, t_{0}-T_{0})\|^{2}_{L^{2}(\Omega)}\leqslant C\left(\exp\left(\frac{-C_{0}}{\varepsilon}\right)\|\varphi(\cdot, t_{0})\|^{2}_{L^{2}(\Omega)}+\|\varphi\|^{2}_{L^{2}(t_{0}-T_{0},t_{0};L^{2}(\omega))}\right).
		\end{equation}
		\item For any integer $m$ such that $1\leqslant m\leqslant\frac{T}{T_{0}}$, there exists $C^{\prime}>0$ independent of $\varepsilon$ such that, for all $t\in [mT_{0},T]$, we have
		\begin{equation} \label{disspation 3}
			\|\varphi(\cdot, 0)\|^{2}_{L^{2}(\Omega)}\leqslant C^{\prime}\left(\exp\left(\frac{-mC_{0}}{\varepsilon}\right)\|\varphi(\cdot, t)\|^{2}_{L^{2}(\Omega)}+\|\varphi\|^{2}_{L^{2}(0,T;L^{2}(\omega))}\right),
		\end{equation}
    \end{enumerate}
    where $C_{0}>0$ is the constant of Proposition \ref{P2}.
\end{proposition}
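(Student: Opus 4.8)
The plan is to prove (1) by a cut-off decomposition that reduces the estimate on $\mathcal{U}=\Omega\setminus\overline{\omega_{0}}$ to Propositions~\ref{P2} and \ref{P0}, to recover the part of the $L^{2}(\Omega)$-mass supported in the hole $\omega_{0}$ by a localized energy inequality, and then to obtain (2) by iterating (1) $m$ times and closing with the Agmon inequality \eqref{A3}. \emph{Set-up:} By Proposition~\ref{P1}, Hypothesis~\ref{C} yields a regular open $\omega_{0}\subset\subset\omega$ for which $(T,T_{0},r_{0}/2,\mathfrak{B},\Omega)$ satisfies \eqref{Flushing Condition}; fix in addition a regular open $\omega_{1}$ with $\omega_{0}\subset\subset\omega_{1}\subset\subset\omega$, and put $\mathcal{U}=\Omega\setminus\overline{\omega_{0}}$, $\Gamma_{0}=\partial\omega_{0}$. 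Choose $\eta\in C^{\infty}(\overline{\Omega})$, $0\le\eta\le1$, with $\eta\equiv1$ on $\Omega\setminus\omega_{1}$ (hence near $\Gamma$) and $\eta\equiv0$ on a neighbourhood of $\overline{\omega_{0}}$, so that $\nabla\eta$ is supported in $\omega_{1}\setminus\overline{\omega_{0}}\subset\subset\omega$.

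\emph{Proof of (1).} Let $\varphi$ solve $\mathcal{S}(\varnothing,0,T,0,G,\varepsilon,\mathfrak{B})$ and $t_{0}\in[T_{0},T]$. A direct computation shows that $w:=\eta\varphi$, restricted to $\mathcal{U}\times(t_{0}-T_{0},t_{0})$, solves $\mathcal{S}\bigl(\omega_{0},t_{0}-T_{0},t_{0},F_{\eta},\eta\varphi(\cdot,t_{0}),\varepsilon,\mathfrak{B}\bigr)$ with $F_{\eta}=2\varepsilon\,\nabla\eta\cdot\nabla\varphi+\varepsilon\varphi\,\Delta\eta+\varphi\,\mathfrak{B}\cdot\nabla\eta$ (the Robin condition on $\Gamma$ is inherited because $\eta\equiv1$ there, and $w\equiv0$ near $\Gamma_{0}$). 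The decisive step is to put the first-order term in divergence form,
\[
2\varepsilon\,\nabla\eta\cdot\nabla\varphi=2\varepsilon\sum_{i=1}^{d}\partial_{x_{i}}\bigl(\varphi\,\partial_{x_{i}}\eta\bigr)-2\varepsilon\varphi\,\Delta\eta ,
\]
which gives $F_{\eta}=f_{0}+\varepsilon\sum_{i=1}^{d}\partial_{x_{i}}f_{i}$ with $f_{i}=2\varphi\,\partial_{x_{i}}\eta\in L^{2}(t_{0}-T_{0},t_{0};H^{1}_{0}(\mathcal{U}))$, $f_{0}=-\varepsilon\varphi\,\Delta\eta+\varphi\,\mathfrak{B}\cdot\nabla\eta$, all supported in $\omega_{1}\setminus\overline{\omega_{0}}$ and satisfying $\sum_{i=0}^{d}\|f_{i}\|_{L^{2}(t_{0}-T_{0},t_{0};L^{2}(\mathcal{U}))}\le C\|\varphi\|_{L^{2}(t_{0}-T_{0},t_{0};L^{2}(\omega))}$ uniformly in $\varepsilon\in(0,1)$. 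By linearity and uniqueness (Proposition~\ref{pw}), $w=w_{1}+w_{2}$, with $w_{1}$ the solution with final datum $\eta\varphi(\cdot,t_{0})$ and zero source and $w_{2}$ the solution with datum $0$ and source $F_{\eta}$. Proposition~\ref{P2} applied to $w_{1}$ and Proposition~\ref{P0} applied to $w_{2}$ (both using $\mathfrak{B}\cdot\n\ge0$ on $\Gamma_{T}$, Hypothesis~\ref{B}) give, after adding, $\|w(\cdot,t_{0}-T_{0})\|_{L^{2}(\mathcal{U})}^{2}\le C\bigl(\exp(-C_{0}/\varepsilon)\|\varphi(\cdot,t_{0})\|_{L^{2}(\Omega)}^{2}+\|\varphi\|_{L^{2}(t_{0}-T_{0},t_{0};L^{2}(\omega))}^{2}\bigr)$; since $\eta\equiv1$ on $\Omega\setminus\omega_{1}\subset\mathcal{U}$ this already controls $\|\varphi(\cdot,t_{0}-T_{0})\|_{L^{2}(\Omega\setminus\omega_{1})}^{2}$. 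For the missing piece $\|\varphi(\cdot,t_{0}-T_{0})\|_{L^{2}(\omega_{1})}^{2}$, take $\zeta\in C^{\infty}_{c}(\omega)$, $0\le\zeta\le1$, $\zeta\equiv1$ on $\omega_{1}$, and $\chi\in C^{1}$ with $\chi(t_{0}-T_{0})=1$, $\chi(t_{0})=0$, $\chi'\le0$; testing the equation of $\varphi$ against $\zeta^{2}\chi^{2}\varphi$ (no boundary term on $\Gamma$ since $\mathrm{supp}\,\zeta\subset\subset\Omega$), absorbing the cross term by Young's inequality, discarding the nonnegative term $\varepsilon\int\zeta^{2}\chi^{2}|\nabla\varphi|^{2}$, using $\varepsilon<1$, and integrating over $[t_{0}-T_{0},t_{0}]$ yields $\|\varphi(\cdot,t_{0}-T_{0})\|_{L^{2}(\omega_{1})}^{2}\le C\|\varphi\|_{L^{2}(t_{0}-T_{0},t_{0};L^{2}(\omega))}^{2}$ with $C$ independent of $\varepsilon$. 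Adding the two pieces and using $\Omega=(\Omega\setminus\omega_{1})\cup\omega_{1}$ gives \eqref{disspation 2}. (As usual one first argues with strong solutions for $G\in H^{1}(\Omega)$, via Proposition~\ref{ps}, to justify the integrations by parts, and then passes to the $L^{2}$-limit.)

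\emph{Proof of (2).} For $1\le m\le T/T_{0}$ and $t\in[mT_{0},T]$, apply \eqref{disspation 2} successively with $t_{0}=t-jT_{0}$, $j=0,\dots,m-1$ (each of which lies in $[T_{0},T]$). Since $\exp(-C_{0}/\varepsilon)\le1$, the observation terms only accumulate, and the intervals $[t-(j+1)T_{0},t-jT_{0}]$ cover $[t-mT_{0},t]\subseteq[0,T]$, so
\[
\|\varphi(\cdot,t-mT_{0})\|_{L^{2}(\Omega)}^{2}\le C^{m}\exp(-mC_{0}/\varepsilon)\|\varphi(\cdot,t)\|_{L^{2}(\Omega)}^{2}+\bigl(C+\cdots+C^{m}\bigr)\|\varphi\|_{L^{2}(0,T;L^{2}(\omega))}^{2}.
\]
Finally \eqref{A3} on $\Omega$ gives $\|\varphi(\cdot,0)\|_{L^{2}(\Omega)}^{2}\le\exp(C_{\mathfrak{B}}T)\,\|\varphi(\cdot,t-mT_{0})\|_{L^{2}(\Omega)}^{2}$, and since $m\le T/T_{0}$ the factors $C^{m}$ and $C+\cdots+C^{m}$ are bounded by a constant $C'$ independent of $\varepsilon$; this yields \eqref{disspation 3}.

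\emph{Main difficulty.} The delicate point throughout is $\varepsilon$-uniformity. The commutator $F_{\eta}$ contains $\varepsilon\,\nabla\varphi$, whose $L^{2}$-norm only admits the a priori bound of order $\varepsilon^{-1/2}\exp(C/\varepsilon)$, so it cannot be treated as an $L^{2}$-source; rewriting it as $\varepsilon\,\partial_{x_{i}}f_{i}$ is exactly what makes Proposition~\ref{P0}, whose estimate involves only $\|f_{i}\|_{L^{2}(L^{2})}$, applicable with an $\varepsilon$-free constant. The second subtle point is that the dissipation estimate of Proposition~\ref{P2} only controls $\varphi$ on $\mathcal{U}$; the localized space--time energy inequality above is what converts the fixed-time mass of $\varphi$ on the hole $\omega_{0}$ (hence on $\omega_{1}\supset\omega_{0}$) into the observation norm over $\omega$.
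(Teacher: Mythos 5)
Your proof is correct and follows essentially the same route as the paper's: a cut-off decomposition separating the part of $\varphi$ living near $\omega_{0}$ (controlled directly by the observation on $\omega$) from the part supported in $\mathcal{U}$ (split into a homogeneous piece handled by Proposition \ref{P2} and a source piece handled by Proposition \ref{P0} after writing the commutator in divergence form), followed by telescoping and the Agmon estimate \eqref{A3} for part (2). The only cosmetic difference is that you treat the mass on $\omega_{1}$ by a hand-rolled Caccioppoli-type estimate instead of applying Proposition \ref{P0} to the time-truncated product $\psi(t)\vartheta(x)\varphi$ as the paper does; just be careful there that the transport cross term $\int\zeta^{2}\chi^{2}\varphi\,\mathfrak{B}\cdot\nabla\varphi$ must be integrated by parts into $\tfrac12\int\nabla\cdot(\zeta^{2}\mathfrak{B})\,\chi^{2}|\varphi|^{2}$ rather than absorbed by Young's inequality against $\varepsilon\int\zeta^{2}\chi^{2}|\nabla\varphi|^{2}$, which would cost a factor $1/\varepsilon$ and destroy the uniformity you need.
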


\begin{proof}
	Throughout this proof $C\geq 1$ will be an independent constant of $\varepsilon$ which will be changed from one line to another, and $C_{0}, T_{0}$ are the constants of Proposition \ref{P2}.\\ 
	(1) The proof is based on the classic cut-off technique. Let us now consider $\vartheta\in \mathcal{C}^{\infty}(\mathbb{R}^{d})$ that check $\vartheta=1$ in a neighborhood of $\omega_{0}$ ($\omega_{0}$ is defined in the introduction to Subsection \ref{Dissipartion results}) and $\text{supp}(\vartheta)\subset\omega$. Define
	\begin{eqnarray*}
		\varphi_{1}(x,t)=\vartheta(x)\varphi(x,t)\quad\text{and}\quad \varphi_{2}(x,t)=(1-\vartheta(x))\varphi(x,t)\quad \text{on}\;\Omega_{T}.
	\end{eqnarray*}
	\textbf{Estimation of $\varphi_{1}$.}
	We will estimate $\varphi_{1}$ using  Proposition \ref{P0}. Firstly, one has
	\begin{eqnarray*}
		\partial_{t}\varphi_{1}+\varepsilon\Delta\varphi_{1}+\nabla\cdot(\varphi_{1}\mathfrak{B}(x,t))= F(x,t) \quad \text{on}\;\;\Omega_{T},
	\end{eqnarray*}
	where
	\begin{eqnarray*}
		F(x,t):= \varepsilon\displaystyle\sum_{i=1}^{d}\partial_{x_{i}}\left(2\varphi\partial_{x_{i}}\vartheta\right)-(\varepsilon\Delta\vartheta-\mathfrak{B}(x,t)\cdot\nabla\vartheta)\varphi.
	\end{eqnarray*}
	To apply Proposition \ref{P0}, we truncate $\varphi_{1}$ by $\psi\in\mathcal{C}^{\infty}(\mathbb{R})$ such that $\psi(t_{0}-T_{0})=1$ and $\psi(t_{0})=0$. Taking  $\varphi_{3}(x,t):=\psi(t)\varphi_{1}(x,t)$. Then 
	\begin{eqnarray*}
		\partial_{t}\varphi_{3}+\varepsilon\Delta\varphi_{3}+\nabla\cdot(\varphi_{3}\mathfrak{B}(x,t))
  %&=& \psi^{\prime}(t)\varphi_{1}(x,t)+\psi(t)\left[\partial_{t}\varphi_{1}+\varepsilon\Delta\varphi_{1}+\nabla\cdot(\varphi_{1}\mathfrak{B}(x,t))\right]\\
%		&=& \psi^{\prime}(t)\varphi_{1}(x,t)+\psi(t)F(x,t)\\
		&=& \psi^{\prime}(t)\vartheta(x)\varphi(x,t)+\psi(t)F(x,t):=H(x,t).
	\end{eqnarray*}
	Hence $\varphi_{3}$ is the solution of $\mathcal{S}(\varnothing,t_{0}-T_{0},t_{0}, H,0,\varepsilon,\mathfrak{B}).$ Let us apply Proposition \ref{P0} with $f_{i}=2\psi(t)\partial_{x_{i}}\vartheta\varphi,\; 1\leqslant i\leqslant d$ and $f_{0}=[\psi^{\prime}(t)\vartheta(x)-\psi(t)(\varepsilon\Delta\vartheta-\mathfrak{B}(x,t)\cdot\nabla\vartheta)]\varphi$, we obtain 
	\begin{eqnarray*}
		\|\varphi_{3}(\cdot,t_{0}-T_{0})\|^{2}_{L^{2}(\Omega)}\leqslant C\sum_{i=0}^{d}\|f_{i}\|^{2}_{L^{2}(t_{0}-T_{0},t_{0};L^{2}(\Omega))}.
	\end{eqnarray*}
	Since $\vartheta$ has support in $\omega$ and $\psi(t_{0}-T_{0})=1$, then 
	\begin{eqnarray}
		\|\varphi_{1}(\cdot,t_{0}-T_{0})\|^{2}_{L^{2}(\Omega)}\leqslant C\|\varphi\|^{2}_{L^{2}(t_{0}-T_{0},t_{0};L^{2}(\omega) )}. \label{dd10}
	\end{eqnarray}
	\textbf{Estimation of $\varphi_{2}$.} Now, we will estimate $\varphi_{2}$ by decomposing it into two solutions using Propositions \ref{P0} and \ref{P2}. Since $\varphi_{2}=\varphi-\varphi_{1}$, then 
	\begin{eqnarray*}
		\partial_{t}\varphi_{2}+\varepsilon\Delta\varphi_{2}+\nabla\cdot(\varphi_{2}\mathfrak{B}(x,t))= -F(x,t) \quad \text{on}\;\;\Omega_{T}.
	\end{eqnarray*}
	Therefore, we decompose $\varphi_{2}$ on $\mathcal{U}\times (t_{0}-T_{0},t_{0})$, %$\mathcal{U}=\Omega\setminus\overline{\omega_{0}}$
	as follows 
	$$\begin{cases}
		\varphi_{2}=\varphi_{4}+\varphi_{5},\\
		\varphi_{4}\;\text{is the solution of}\; \mathcal{S}(\omega_{0},t_{0}-T_{0},t_{0},0,\varphi_{2}(\cdot,t_{0}),\varepsilon,\mathfrak{B}),\\
		\varphi_{5}\;\text{is the solution of}\; \mathcal{S}(\omega_{0},t_{0}-T_{0},t_{0},-F,0,\varepsilon,\mathfrak{B}).
	\end{cases}$$
	From Proposition \ref{P2} and $\varphi_{2}(x,t_{0})=(1-\vartheta(x))\varphi(x,t_{0})$, we obtain 
	\begin{equation} 
		\|\varphi_{4}(\cdot, t_{0}-T_{0})\|^{2}_{L^{2}(\mathcal{U})}\leqslant C\exp\left(\frac{-C_{0}}{\varepsilon}\right)\|\varphi(\cdot, t_{0})\|^{2}_{L^{2}(\mathcal{U})}. \label{d7}
	\end{equation}
	Concerning $\varphi_{5}$, by application of Proposition \ref{P0} with $f_{0}=(\varepsilon\Delta\vartheta-\mathfrak{B}(x,t)\cdot\nabla\vartheta)\varphi$ and  $f_{i}=-2\partial_{x_{i}}\vartheta\varphi,\;\; 1\leqslant i\leqslant d$, we get
	\begin{eqnarray*}
		\|\varphi_{5}(\cdot,t_{0}-T_{0})\|^{2}_{L^{2}(\mathcal{U})}\leqslant C\sum_{i=0}^{d}\|f_{i}\|^{2}_{L^{2}(t_{0}-T_{0},t_{0};L^{2}(\mathcal{U}))}.
	\end{eqnarray*}
	Since $\vartheta$ has support in $\omega$, then 
	\begin{eqnarray}
		\|\varphi_{5}(\;\cdot,t_{0}-T_{0})\|^{2}_{L^{2}(\mathcal{U})}
  %&\leqslant & C\|\varphi\|^{2}_{L^{2}(t_{0}-T_{0},t_{0};L^{2}(\omega\setminus\overline{\omega_{0}}))} \nonumber\\
		\leqslant  C\|\varphi\|^{2}_{L^{2}(t_{0}-T_{0},t_{0};L^{2}(\omega))}. \label{d8}
	\end{eqnarray}
	The function $\vartheta=1$ in a neighborhood of $\omega_{0}$ implies that $\varphi_{2}$ has a support in $\mathcal{U}$, thus from \eqref{d7} and \eqref{d8}, we obtain
	\begin{eqnarray}
		\|\varphi_{2}(\cdot,t_{0}-T_{0})\|^{2}_{L^{2}(\Omega)}&=& \|\varphi_{2}(\cdot,t_{0}-T_{0})\|^{2}_{L^{2}(\mathcal{U})} \nonumber\\
		&\leqslant & 2\left( \|\varphi_{4}(\cdot,t_{0}-T_{0})\|^{2}_{L^{2}(\mathcal{U})}+\|\varphi_{5}(\cdot,t_{0}-T_{0})\|^{2}_{L^{2}(\mathcal{U})}\right) \nonumber\\
		&\leqslant & C\left(\exp\left(\frac{-C_{0}}{\varepsilon}\right)\|\varphi(\cdot, t_{0})\|^{2}_{L^{2}(\mathcal{U})}+\|\varphi\|^{2}_{L^{2}(t_{0}-T_{0},t_{0};L^{2}(\omega))}\right). \label{d9}
	\end{eqnarray}
	Finally, using \eqref{dd10} and \eqref{d9}, we obtain 
	\begin{eqnarray*}
		\|\varphi(\cdot,t_{0}-T_{0})\|^{2}_{L^{2}(\Omega)}&\leqslant & C\left(\exp\left(\frac{-C_{0}}{\varepsilon}\right)\|\varphi(\cdot, t_{0})\|^{2}_{L^{2}(\mathcal{U})}+\|\varphi\|^{2}_{L^{2}(t_{0}-T_{0},t_{0};L^{2}(\omega))}\right) \nonumber\\
		&\leqslant & C\left(\exp\left(\frac{-C_{0}}{\varepsilon}\right)\|\varphi(\cdot, t_{0})\|^{2}_{L^{2}(\Omega)}+\|\varphi\|^{2}_{L^{2}(t_{0}-T_{0},t_{0};L^{2}(\omega))}\right).\nonumber\\ \label{d11}
	\end{eqnarray*}
	(2)  Let $m$ be an integer such that $1\leqslant m\leqslant \frac{T}{T_{0}}$. From the first dissipation estimate \eqref{disspation 2}, we get 
	\begin{equation*} 
		\|\varphi(\cdot, (k-1)T_{0})\|^{2}_{L^{2}(\Omega)}\leqslant C\left(\exp\left(\frac{-C_{0}}{\varepsilon}\right)\|\varphi(\cdot, kT_{0})\|^{2}_{L^{2}(\Omega)}+\int_{(k-1)T_{0}}^{kT_{0}}\int_{\omega}|\varphi|^{2}\mathrm{~d}x\mathrm{~d}t\right),
	\end{equation*}
	for all $k=1,2,\cdots, m$. This last estimate gives
	\begin{eqnarray} \label{d5}
		&&C^{k-1}\exp\left(\frac{-(k-1)C_{0}}{\varepsilon}\right)\|\varphi(\cdot, (k-1)T_{0})\|^{2}_{L^{2}(\Omega)}-C^{k}\exp\left(\frac{-kC_{0}}{\varepsilon}\right)\|\varphi(\cdot, kT_{0})\|^{2}_{L^{2}(\Omega)}\leqslant C^{m}\int_{(k-1)T_{0}}^{kT_{0}}\int_{\omega}|\varphi|^{2}\mathrm{~d}x\mathrm{~d}t.
	\end{eqnarray}
	Summing \eqref{d5} from $1$ to $m,$ we obtain 
	\begin{eqnarray} \label{d10}
		\|\varphi(\cdot, 0)\|^{2}_{L^{2}(\Omega)}&\leqslant & C^{m}\left(\exp\left(\frac{-mC_{0}}{\varepsilon}\right)\|\varphi(\cdot, mT_{0})\|^{2}_{L^{2}(\Omega)}+\int_{0}^{mT_{0}}\int_{\omega}|\varphi|^{2}\mathrm{~d}x\mathrm{~d}t\right)\nonumber\\
		&\leqslant & C^{m}\left(\exp\left(\frac{-mC_{0}}{\varepsilon}\right)\|\varphi(\cdot, mT_{0})\|^{2}_{L^{2}(\Omega)}+\int_{0}^{T}\int_{\omega}|\varphi|^{2}\mathrm{~d}x\mathrm{~d}t\right).
	\end{eqnarray}
	Since $\mathfrak{B}(x,t)\cdot\mathbf{n}(x)\geq 0$ on $\Gamma_{T}$. By Agmon inequality \eqref{A3}, we have 
	\begin{eqnarray}
		\| \varphi(\cdot,mT_{0})\|^{2}_{L^{2}(\Omega)}
		&\leqslant & \exp\left(C_{\mathfrak{B}}(t-mT_{0})\right)\| \varphi(\cdot,t)\|^{2}_{L^{2}(\Omega)} \nonumber \\
		&\leqslant &  \exp\left(C_{\mathfrak{B}}T_{0}\right)\| \varphi(\cdot,t)\|^{2}_{L^{2}(\Omega)}. \label{d6}
	\end{eqnarray}
	From \eqref{d10} and \eqref{d6}, we obtain the dissipation estimate \eqref{disspation 3}. 
\end{proof}
\subsection{An observability inequality for the solutions of \eqref{s2}}
Let us consider $f\in W^{2,\infty}(\Omega_{T})$ such that, the transport field is of the time-dependent gradient form, i.e., $\mathfrak{B}(x,t)=\nabla f(x,t)$. In this case \eqref{s2} becomes 
\begin{equation} 
	\left\{
	\begin{aligned}
		\partial_t \varphi+\varepsilon\Delta \varphi+\nabla f\cdot\nabla\varphi+\Delta f\varphi &=0 & & \text { in } \Omega_{T}, \\
		\varepsilon\partial_{\n}\varphi + \varphi\partial_{\n}f&=0 & & \text { on } \Gamma_{T}, \\
		\varphi(\cdot,T)&=\varphi_{T} & & \text { in } \Omega.
		\label{2}
	\end{aligned}
	\right.
\end{equation}
In order to establish an observability inequality with an observability constant $\exp\left(\frac{C}{\varepsilon}\left(1+\frac{1}{T}\right)\right)$ for a constant $C>0$ independent of $\varepsilon$ and $T$, we have to show a Carleman estimate for the solutions of the adjoint system \eqref{2} while satisfying the constraint $s\geq \frac{C}{\varepsilon}$ and $\lambda\geq C$ (see parameters $s$ and $\lambda$ below). However, the presence of a transport field and the constraint pose challenges in this regard. To address this, we transform system \eqref{2} to a system without a transport term, using the transformation: 
\begin{eqnarray}
	\varphi(\cdot,t)\longmapsto \Phi(\cdot,t):=\exp\left(\frac{f(\cdot, t)}{2\varepsilon}\right)\varphi(\cdot,t). \label{chang2}
\end{eqnarray}
Then $\varphi$ is the solution of \eqref{2} if and only if $\Phi$ is the solution of the following system:
\begin{equation} \label{S3}
	\left\{
	\begin{aligned}
		-\partial_t\Phi-\varepsilon\Delta\Phi + a_{\varepsilon}(f)\Phi &=0 & & \text { in } \Omega_{T}, \\
		\varepsilon\partial_{\n}\Phi+b(f)\Phi  &=0 & & \text { on } \Gamma_{T}, \\
		\Phi(x,T)&=\Phi_{T}(x) & & \text{ in } \Omega,
	\end{aligned}
	\right.
\end{equation}
where $a_{\varepsilon}(f):=\frac{\mathcal{V}(f)}{\varepsilon}-\frac{\Delta f}{2}$, $\mathcal{V}(f):=\frac{|\nabla f|^{2}}{4}+\frac{\partial_{t}f}{2}$, $b(f):=\frac{\partial_{\n} f}{2}$ and $\Phi_{T}:=\exp\left(\frac{f(\cdot, T)}{2\varepsilon}\right)\varphi_{T}$.
The techniques employed are inspired by previous works such as \cite{barcena2021cost,fernandez2006null}. We introduce the following positive weight functions $\alpha_{\pm}$ and $\xi_{\pm}$ that depend only on $\Omega$ and $\omega$:
\begin{eqnarray}\label{def:alphaxi}
	\alpha_{\pm}(x, t):=\frac{\exp(6\lambda)-\exp(4\lambda\pm\lambda\eta(x))}{t(T-t)} \quad\mbox{and}\quad \xi_{\pm}(x, t):=\frac{\exp(4\lambda\pm\lambda\eta(x))}{t(T-t)},
\end{eqnarray}
where $\lambda \geq 1$ and $\eta=\eta(x)$ is a function in $\mathcal{C}^2(\overline{\Omega})$ satisfying
\begin{equation}
	\label{eta}
	\eta>0 \text { in } \Omega, \quad \eta=0 \text { on } \Gamma,\quad  \inf_{\Omega \backslash \omega^{\prime}}\left|\nabla \eta(x)\right|=\delta>0 \;\mbox{and}\; \|\eta\|_{\infty}=1,
\end{equation}
where $\omega^{\prime} \subset \subset \omega$ is a nonempty open set. If $\Omega$ is a domain with $\mathcal{C}^{2}$ smoothness, the paper \cite{fursikov1996controllability} provides a proof of the existence of $\eta$ that satisfies \eqref{eta}. The Carleman estimate we will use is as follows.
\begin{proposition} \label{P4}
	Let $T>0$, $\varepsilon\in (0,1)$, $\Omega$ is a $\mathcal{C}^{2}$ domain, $\omega \subset\Omega$ is a nonempty open set and assume that $f\in W^{2,\infty}(\Omega_{T})$ such that $\partial_{\n}f\geq c$ on $\Gamma_{T}$ for some $c>0$. Then there are constants $C>0$ and $\lambda_{1}, s_{1}\geq 1$ depend only on $\omega$ and $\Omega$ such that
	\begin{align}
		&s^{3}\lambda^{4}\int_{\Omega_{T}}\exp(-2s\alpha_{+})\xi_{+}^{3}|\Phi|^{2}\mathrm{~d}x\mathrm{~d}t +s\lambda^{2}\int_{\Omega_{T}}\exp(-2s\alpha_{+})\xi_{+}|\nabla\Phi|^{2}\mathrm{~d}x\mathrm{~d}t \nonumber\\
		&+ s\lambda^{2}
		\int_{\Gamma_{T}}
		\partial_{\n}f|\partial_{\n}\eta|^{2}\left(\xi+s\xi^{2}\right)\exp(-2s\alpha)|\Phi|^{2}\mathrm{~d}\sigma\mathrm{~d}t\leqslant Cs^{3}\lambda^{4}\int_{\omega_{T}}\exp(-2s\alpha_{+})\xi_{+}^{3}|\Phi|^{2}\mathrm{~d}x\mathrm{~d}t, 
		\label{Carleman}
	\end{align}
	for any $\Phi$ solution of \eqref{S3} with data $\Phi_{T}\in L^{2}(\Omega)$, $\lambda\geq \lambda_{1}$, $s\geq \frac{s_{1}}{\varepsilon}(T+T^{2})\mathcal{C}_{T}(f)$ with
	\begin{eqnarray}
		\mathcal{C}_{T}(f)&:=& 1+\|\nabla f\|_{\infty}+\|\nabla^{2}f\|_{\infty}+\|\nabla\partial_{t} f\|^{2/3}_{\infty}+\|\nabla f\|^{2/3}_{\infty}+ \|\partial_{t}^{2} f\|^{1/3}_{\infty}+\|\Delta f\|_{\infty}^{2/3} \nonumber\\
		&& +\|\partial_{t} f\|^{1/2}_{\infty}+\|\partial_{t}\partial_{\n}f\|_{L^{\infty}(\Gamma_{T})} +\|(\partial_{\n}f)^{-1}\|_{L^{\infty}(\Gamma_{T})}, \label{constant}
         %+ (\inf \text{ess}\; \partial_{\n}f)^{-1}, 
	\end{eqnarray}
 where $\|\cdot\|_{\infty}$ designates $\|\cdot\|_{L^{\infty}(\Omega_{T})}$.
 %, and $\inf \text{ess}\; \partial_{\n}f$ is the essential lower bound of $\partial_{\n}f$ on $\Gamma_{T}$.
\end{proposition}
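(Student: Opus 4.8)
Because \eqref{S3} has no transport term, Proposition~\ref{P4} is a Carleman estimate of Fursikov--Imanuvilov type for the heat operator $-\partial_{t}-\varepsilon\Delta$ with the Robin condition $\varepsilon\partial_{\n}\Phi+b(f)\Phi=0$ and the zeroth-order perturbation $a_{\varepsilon}(f)$. Its two non-standard features are that $a_{\varepsilon}(f)=\frac{\mathcal{V}(f)}{\varepsilon}-\frac{\Delta f}{2}$ is of size $O(1/\varepsilon)$, and that the Robin coefficient $b(f)/\varepsilon=\partial_{\n}f/(2\varepsilon)$ is of size $O(1/\varepsilon)$ but of \emph{good sign} since $\partial_{\n}f\geq c>0$. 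I would follow the scheme of \cite{fernandez2006null,barcena2021cost}, performing all the computations on strong solutions (Proposition~\ref{ps} with $\Phi_{T}\in H^{1}$) and recovering the general case $\Phi_{T}\in L^{2}(\Omega)$ at the very end by density and parabolic smoothing. After the time reversal $t\mapsto T-t$, put $w:=\exp(-s\alpha_{+})\Phi$ and expand $\exp(-s\alpha_{+})(\partial_{t}-\varepsilon\Delta)(\exp(s\alpha_{+})w)$, using $\nabla\alpha_{+}=-\lambda\xi_{+}\nabla\eta$ and $\nabla\xi_{+}=\lambda\xi_{+}\nabla\eta$. Group the result into a formally self-adjoint part $M_{1}w:=-\varepsilon\Delta w-\varepsilon s^{2}\lambda^{2}\xi_{+}^{2}|\nabla\eta|^{2}w+s\,\partial_{t}\alpha_{+}\,w-\frac{\mathcal{V}(f)}{\varepsilon}w$ and a formally skew-adjoint part $M_{2}w:=\partial_{t}w-2\varepsilon s\,\nabla\alpha_{+}\cdot\nabla w-\varepsilon s\,\Delta\alpha_{+}\,w$, so that the equation reads $M_{1}w+M_{2}w=-\frac{\Delta f}{2}w=:g$. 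The point of this splitting is to keep the singular potential $\frac{\mathcal{V}(f)}{\varepsilon}$ \emph{inside} $M_{1}$ --- legitimate because $\nabla\mathcal{V}(f),\partial_{t}\mathcal{V}(f)\in L^{\infty}$ when $f\in W^{2,\infty}(\Omega_{T})$ --- while only the bounded part $\frac{\Delta f}{2}$ is treated as a source, with $\|g\|_{L^{2}(\Omega_{T})}^{2}\leq\frac14\|\Delta f\|_{\infty}^{2}\int_{\Omega_{T}}\exp(-2s\alpha_{+})|\Phi|^{2}$.

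From $\|M_{1}w\|^{2}+\|M_{2}w\|^{2}+2(M_{1}w,M_{2}w)=\|g\|^{2}$ one gets $2(M_{1}w,M_{2}w)_{L^{2}(\Omega_{T})}\leq\|g\|_{L^{2}(\Omega_{T})}^{2}$, and the heart of the proof is the lower bound for $(M_{1}w,M_{2}w)$. Integrating by parts in space and time, the product of $-\varepsilon s^{2}\lambda^{2}\xi_{+}^{2}|\nabla\eta|^{2}w$ with $-2\varepsilon s\,\nabla\alpha_{+}\cdot\nabla w$ produces, using $\nabla\xi_{+}=\lambda\xi_{+}\nabla\eta$, the dominant interior term $\gtrsim s^{3}\lambda^{4}\int_{\Omega_{T}}\exp(-2s\alpha_{+})\xi_{+}^{3}|\nabla\eta|^{4}|\Phi|^{2}$ (up to a fixed power of $\varepsilon$ kept consistently through the argument), and the product of $-\varepsilon\Delta w$ with the first-order part of $M_{2}w$ produces $\gtrsim s\lambda^{2}\int_{\Omega_{T}}\exp(-2s\alpha_{+})\xi_{+}|\nabla\Phi|^{2}$. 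All remaining interior products are of lower order: they carry fewer powers of $s$, $\lambda$ or $\xi_{+}$, or a factor among $\partial_{t}\xi_{+}$ ($\lesssim T\xi_{+}^{2}$), $\Delta\eta$, $\nabla\mathcal{V}(f)$, $\partial_{t}\mathcal{V}(f)$. The factor $1/\varepsilon$ in front of the extra $\frac{\mathcal{V}(f)}{\varepsilon}$-terms is exactly what forces the threshold $s\geq\frac{s_{1}}{\varepsilon}(T+T^{2})\mathcal{C}_{T}(f)$: absorbing every lower-order contribution into the two dominant ones by Young's inequality at the right exponents produces precisely the norms and fractional powers of $f$ listed in \eqref{constant} --- for instance $\|\partial_{t}^{2}f\|_{\infty}^{1/3}$ and $\|\nabla\partial_{t}f\|_{\infty}^{2/3}$ from the time integration of $(\frac{\mathcal{V}(f)}{\varepsilon}w,\partial_{t}w)$, $\|\Delta f\|_{\infty}^{2/3}$ from $\|g\|^{2}$, $\|\nabla^{2}f\|_{\infty}$ and $\|\nabla f\|_{\infty}$ from $\nabla\mathcal{V}(f)\cdot\nabla\alpha_{+}$, and $\|\partial_{t}f\|_{\infty}^{1/2}$ from $\mathcal{V}(f)\,\Delta\alpha_{+}$. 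The boundary integrals over $\Gamma_{T}$ are handled using $\eta|_{\Gamma}=0$, so that $\nabla\alpha_{+}|_{\Gamma}$ is purely normal, together with the Robin condition rewritten as $\partial_{\n}w=\big(s\lambda\xi\,\partial_{\n}\eta-\frac{\partial_{\n}f}{2\varepsilon}\big)w$ on $\Gamma_{T}$; since $\partial_{\n}\eta\leq-\delta<0$ and $\partial_{\n}f\geq c>0$, the $O(1/\varepsilon)$ boundary contributions combine with a favourable sign and, after using $\|(\partial_{\n}f)^{-1}\|_{L^{\infty}(\Gamma_{T})}$ and $\|\partial_{\n}\partial_{t}f\|_{L^{\infty}(\Gamma_{T})}$ from $\mathcal{C}_{T}(f)$, leave the nonnegative boundary term $s\lambda^{2}\int_{\Gamma_{T}}\partial_{\n}f\,|\partial_{\n}\eta|^{2}(\xi+s\xi^{2})\exp(-2s\alpha)|\Phi|^{2}$ on the left.

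Choosing first $\lambda\geq\lambda_{1}$ large enough to absorb the $\lambda$-subcritical interior terms and then $s\geq\frac{s_{1}}{\varepsilon}(T+T^{2})\mathcal{C}_{T}(f)$ as above, one arrives at the Carleman inequality with the three left-hand terms of \eqref{Carleman}, the right-hand side being $C\|g\|^{2}$ plus a local term $Cs^{3}\lambda^{4}\int_{\omega_{T}}\exp(-2s\alpha_{+})\xi_{+}^{3}|\Phi|^{2}$; the local term appears through a standard cut-off that repairs the possible degeneracy of $|\nabla\eta|$ inside $\omega'\subset\subset\omega$, and $C\|g\|^{2}\leq\frac14 C\|\Delta f\|_{\infty}^{2}\int_{\Omega_{T}}\exp(-2s\alpha_{+})|\Phi|^{2}$ is absorbed by the $\xi_{+}^{3}|\Phi|^{2}$-term thanks to the constraint on $s$. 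Undoing the time reversal and the conjugation $\Phi=\exp(s\alpha_{+})w$ gives \eqref{Carleman} for $\Phi_{T}\in H^{1}$, and a density argument extends it to $\Phi_{T}\in L^{2}(\Omega)$.

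The hard part is the simultaneous control of the two $\varepsilon$-singular objects. Absorbing the $O(1/\varepsilon)$ potential is straightforward once $s\gtrsim 1/\varepsilon$, but the boundary analysis is genuinely delicate: the integrations by parts produce boundary integrals of size $1/\varepsilon$ and, through the square of the Robin relation, formally even $1/\varepsilon^{2}$, and one must verify that after regrouping the \emph{net} boundary contribution has the right sign --- which is exactly where $\Omega\in\mathcal{C}^{2}$ (so that $\partial_{\n}\eta$ is well defined and bounded and the weight $\eta$ of \eqref{eta} exists) and $\partial_{\n}f\geq c>0$ are indispensable, and where the boundary quantities $\|(\partial_{\n}f)^{-1}\|_{L^{\infty}(\Gamma_{T})}$ and $\|\partial_{\n}\partial_{t}f\|_{L^{\infty}(\Gamma_{T})}$ in $\mathcal{C}_{T}(f)$ enter. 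Tracking the exact powers of $\varepsilon$ and of $T$ in every term, so as to end up with the scaling $s\asymp\frac1\varepsilon(T+T^{2})\mathcal{C}_{T}(f)$ that ultimately yields the observability constant $\exp\!\big(\frac C\varepsilon(1+\frac1T)\big)$, is the main bookkeeping obstacle.
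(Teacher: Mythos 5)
Your overall scheme (Fursikov--Imanuvilov conjugation, splitting into a ``symmetric'' and an ``antisymmetric'' operator with the singular potential $\mathcal{V}(f)/\varepsilon$ kept in the symmetric part and only $\Delta f/2$ treated as a source, absorption of the lower-order terms under $s\gtrsim\varepsilon^{-1}(T+T^{2})\mathcal{C}_{T}(f)$, local cut-off in $\omega'$, density argument) matches the paper's. But there is a genuine gap in your treatment of the boundary: you conjugate with a \emph{single} weight $\alpha_{+}$, whereas the paper's proof is built on the symmetric pair $\alpha_{\pm}$ (exponents $4\lambda\pm\lambda\eta$), which coincide on $\Gamma$, and on \emph{summing} the two resulting identities. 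This is not a cosmetic choice. With one weight, the cross product $(M_{1}w,M_{2}w)$ produces on $\Gamma_{T}$, among others, the term $\varepsilon^{2}s\lambda\int_{\Gamma_{T}}\xi\,\partial_{\n}\eta\,|\nabla w|^{2}\,\mathrm{d}\sigma\,\mathrm{d}t$; its tangential part $\varepsilon^{2}s\lambda\int_{\Gamma_{T}}\xi\,\partial_{\n}\eta\,|\nabla_{\Gamma}w|^{2}$ is strictly negative (since $\partial_{\n}\eta\le-\delta<0$ on $\Gamma$) and there is no other boundary term of the same strength to absorb it --- in the Dirichlet case it vanishes because $w|_{\Gamma}=0$, but here it does not. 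Likewise the term $\varepsilon s^{2}\lambda\int_{\Gamma_{T}}\xi\,\partial_{t}\alpha\,\partial_{\n}\eta\,|w|^{2}$ is sign-indefinite and, since $|\partial_{t}\alpha|\lesssim T\xi^{2}$, carries a boundary weight $\xi^{3}$ that cannot be dominated by the good $s^{2}\lambda^{2}\xi^{2}$ boundary term. Your assertion that ``the $O(1/\varepsilon)$ boundary contributions combine with a favourable sign'' therefore does not hold for these terms in the single-weight setting.

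The paper resolves exactly this by the two-weight device of \cite{fernandez2006null}: because $\psi_{+}=\psi_{-}$ and $\nabla_{\Gamma}\psi_{+}=\nabla_{\Gamma}\psi_{-}$ on $\Gamma_{T}$, the sign-indefinite boundary terms appear with opposite signs in $I_{+}$ and $I_{-}$ and cancel in $I_{+}+I_{-}$; only then is the Robin relation $\varepsilon\partial_{\n}\Phi=-\tfrac{\partial_{\n}f}{2}\Phi$ inserted, yielding (after one time integration by parts that generates the $\|\partial_{t}\partial_{\n}f\|_{L^{\infty}(\Gamma_{T})}$ and $\|(\partial_{\n}f)^{-1}\|_{L^{\infty}(\Gamma_{T})}$ contributions to $\mathcal{C}_{T}(f)$) the nonnegative boundary term $s\lambda^{2}\int_{\Gamma_{T}}\partial_{\n}f\,|\partial_{\n}\eta|^{2}(\xi+s\xi^{2})\exp(-2s\alpha)|\Phi|^{2}$ of \eqref{Carleman}. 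To repair your argument you must either adopt this double-weight summation or supply an independent mechanism controlling the tangential-gradient and $\partial_{t}\alpha$ boundary terms; as written, the proof does not close.
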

\begin{proof}
	The proof of this result is given in Appendix C. 
\end{proof}
Under the same conditions of Proposition \ref{P4}, we have the following observability inequality:
\begin{corollary} 
	Let $f\in W^{2,\infty}(\Omega\times (0,+\infty))$ such that $\partial_{\n}f\geq c$ on $\Gamma\times (0,+\infty)$ for some $c>0$, and assume the same conditions in Proposition  \ref{P4}. Then, for all $0<\kappa<1$, there are two constants $C$ independent of $\varepsilon$ and $C_{1}>0$ independent of $\varepsilon$ and $T$ such that, for all $t\in [0,\kappa\,T]$, we have 
	\begin{equation}
 \int_{\Omega}|\varphi(x,t)|^{2}\mathrm{~d} x \leqslant C\exp\left(\frac{C_{1}}{\varepsilon}\left(1+\frac{1}{T}\right)\right)\int_{\omega_{T}}|\varphi(x,t)|^{2}\mathrm{~d} x \mathrm{~d}t.\label{io1}
	\end{equation}
\end{corollary}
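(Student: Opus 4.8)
The plan is to feed the Carleman estimate of Proposition \ref{P4} into the standard weight-function mechanism, and then to upgrade the resulting time-integrated bound into a pointwise-in-time bound by means of the Agmon/dissipation inequality \eqref{A3}. First I would pass to the conjugated unknown $\Phi=\exp(f/(2\varepsilon))\varphi$ of \eqref{chang2}, which solves \eqref{S3} with datum $\Phi_{T}=\exp(f(\cdot,T)/(2\varepsilon))\varphi_{T}\in L^{2}(\Omega)$ since $f\in L^{\infty}$. Applying \eqref{Carleman} with $\lambda=\lambda_{1}$ and $s=\tfrac{s_{1}}{\varepsilon}(T+T^{2})\mathcal{C}_{T}(f)$, and discarding on the left-hand side the gradient term and the boundary term — the latter being nonnegative because $\partial_{\n}f\ge c>0$ makes the weight $\partial_{\n}f\,|\partial_{\n}\eta|^{2}(\xi+s\xi^{2})\exp(-2s\alpha)$ nonnegative — one is left with
\[
s^{3}\lambda^{4}\int_{\Omega_{T}}\exp(-2s\alpha_{+})\xi_{+}^{3}|\Phi|^{2}\,\d x\,\d t\ \le\ C\,s^{3}\lambda^{4}\int_{\omega_{T}}\exp(-2s\alpha_{+})\xi_{+}^{3}|\Phi|^{2}\,\d x\,\d t.
\]

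Next comes the weight bookkeeping. On the right-hand side, using $0\le\eta\le1$ in \eqref{eta} one has $\alpha_{+}\ge c_{\lambda}/(t(T-t))$ and $\xi_{+}\le e^{5\lambda}/(t(T-t))$ on all of $\Omega_{T}$, so the elementary maximization of $u\mapsto u^{3}e^{-2c_{\lambda}u}$ (with $u=s/(t(T-t))$) gives $\sup_{\Omega_{T}}\big(s^{3}\lambda^{4}\exp(-2s\alpha_{+})\xi_{+}^{3}\big)\le M_{\lambda}$, a constant depending on $\lambda$ only and not on $s,T,\varepsilon$; hence the right-hand side is $\le CM_{\lambda}\int_{\omega_{T}}|\Phi|^{2}$. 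On the left-hand side I would restrict the integral to the fixed time window $I=\big[\tfrac{3\kappa+1}{4}T,\tfrac{\kappa+3}{4}T\big]\subset(\kappa T,T)$, on which $t(T-t)\asymp T^{2}$ with constants depending on $\kappa$, so that $\alpha_{+}\le \tfrac{16e^{6\lambda}}{(1-\kappa)T^{2}}$ and $\xi_{+}\ge \tfrac{4e^{4\lambda}}{T^{2}}$ there; this lower-bounds the left-hand side by $c_{\lambda}\,T^{-6}\exp\!\big(-\tfrac{Cs}{T^{2}}\big)\int_{I}\!\int_{\Omega}|\Phi|^{2}$. Converting back to $\varphi$ via $|\Phi|^{2}=\exp(f/\varepsilon)|\varphi|^{2}$ (which costs a factor $\exp(\pm\|f\|_{L^{\infty}}/\varepsilon)$ on each side) and using that $\tfrac{s}{T^{2}}=\tfrac{s_{1}}{\varepsilon}\mathcal{C}_{T}(f)\big(1+\tfrac1T\big)\le \tfrac{s_{1}\mathcal{C}_{\infty}(f)}{\varepsilon}\big(1+\tfrac1T\big)$ — here the extension hypotheses $f\in W^{2,\infty}(\Omega\times(0,\infty))$ and $\partial_{\n}f\ge c$ on $\Gamma\times(0,\infty)$ are exactly what keeps the quantity $\mathcal{C}_{T}(f)$ of \eqref{constant} (in particular $\|(\partial_{\n}f)^{-1}\|_{L^{\infty}(\Gamma_{T})}\le 1/c$) bounded uniformly in $T$ — I obtain
\[
\int_{I}\int_{\Omega}|\varphi(x,s)|^{2}\,\d x\,\d s\ \le\ C\exp\!\Big(\tfrac{C_{1}}{\varepsilon}\big(1+\tfrac1T\big)\Big)\int_{\omega_{T}}|\varphi(x,t)|^{2}\,\d x\,\d t,
\]
with $C_{1}$ depending only on $\Omega,\omega,\kappa,f$ and $C$ possibly depending on $T$, but both independent of $\varepsilon$.

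Finally, to reach \eqref{io1} for every $t\in[0,\kappa T]$, I would invoke the Agmon inequality \eqref{A3} for $\varphi$, valid since $\mathfrak{B}=\nabla f$ satisfies $\mathfrak{B}\cdot\n=\partial_{\n}f\ge0$ with $C_{\mathfrak{B}}=\|\Delta f\|_{L^{\infty}(\Omega_{T})}$: for $t\le s$ one has $\int_{\Omega}|\varphi(x,t)|^{2}\le e^{C_{\mathfrak{B}}T}\int_{\Omega}|\varphi(x,s)|^{2}$; averaging this over $s\in I$ and inserting the previous display yields \eqref{io1}. The only genuinely delicate point in the whole argument is this last bit of bookkeeping — verifying that the $T$-dependence coming from $s\sim(T+T^{2})/\varepsilon$ and from $\max_{I}\alpha_{+}\lesssim 1/T^{2}$ recombines into precisely the factor $\exp\!\big(\tfrac{C_{1}}{\varepsilon}(1+\tfrac1T)\big)$ with $C_{1}$ free of $T$ — since this is what makes the estimate exploitable in the uniform-controllability argument of Appendix A; all the hard analysis (the Carleman estimate itself) has already been delegated to Proposition \ref{P4}.
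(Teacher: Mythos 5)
Your proposal is correct and follows essentially the same route as the paper: conjugate to $\Phi$, apply the Carleman estimate of Proposition \ref{P4} with $\lambda=\lambda_{1}$ and $s\sim\frac{s_{1}}{\varepsilon}(T+T^{2})\mathcal{C}_{\infty}(f)$, compare the weights on a fixed interior time window against those on $\omega_{T}$ (the paper does this via the auxiliary functions $\hat h,\check h$, you via a pointwise sup of $s^{3}\xi_{+}^{3}e^{-2s\alpha_{+}}$, which is an equivalent bookkeeping), and then propagate to all $t\in[0,\kappa T]$ by the dissipation inequality \eqref{A3} averaged over the window. The $T$-accounting you flag as the delicate point works out exactly as you describe, matching the paper's derivation of \eqref{e2}--\eqref{e3}.
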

\begin{proof}
    By Carleman estimate \eqref{Carleman}, we obtain
	\begin{equation}
		\begin{aligned}
			\int_{\Omega_{T}} \exp(-2 s \alpha_{+})\xi_{+}^3|\Phi|^2 \mathrm{d} x \mathrm{~d}t \leq C  \int_{\omega_{T}} \exp(-2 s \alpha_{+}) \xi_{+}^3|\Phi|^2 \mathrm{~d} x \mathrm{~d}t,
		\end{aligned}
	\end{equation}
	where $\lambda=\lambda_{1}$ and $s = \frac{s_{1}}{\varepsilon}(T^{2}+T)\mathcal{C}_{\infty}(f)$; see definition of $\mathcal{C}_{T}(f)$ in \eqref{constant}. Note that $\mathcal{C}_{\infty}(f)$ is well defined and independent of $T$, since $f\in W^{2,\infty}(\Omega\times (0,\infty))$ and $\partial_{\n}f\geq c$ on $\Gamma\times (0,+\infty)$. Taking lower and upper estimates with respect to $x$ of the weight functions, we get 
	\begin{equation}
		\begin{aligned}
			\int_{\Omega_{T}}\check{h}(t) |\Phi(x,t)|^2 \mathrm{d} x \mathrm{~d} t \leq C\int_{\omega_{T}}\hat{h}(t)|\Phi(x,t)|^2 \mathrm{~d} x \mathrm{~d}t,
		\end{aligned}
	\end{equation}
	where 
	$$\begin{cases*} \displaystyle
		\check{h}(t):=\exp \left(-2 s \max _{x \in \overline{\Omega}} \alpha_{+}(x,t)\right) \min _{x \in \overline{\Omega}} \xi^{3}_{+}(x,t),\\
		\displaystyle \hat{h}(t):=\exp \left(-2 s \min _{x \in \overline{\Omega}} \alpha_{+}(x,t)\right) \max _{x \in \overline{\Omega}} \xi^{3}_{+}(x,t).
	\end{cases*}$$
	For $\lambda_{1}$ and $s_{1}$ large enough, it is easy to check that the function $\hat{h}$ admits a maximum on $[0,T]$ at $t=\frac{T}{2}$ and
	$\check{h}$ admits a minimum on $\left[\kappa T,\left(\frac{1+\kappa}{2}\right)T\right]$ at $t=\kappa_{0}T$ where $\kappa_{0}\in \left[\kappa,\frac{1+\kappa}{2}\right]$. Hence 
	\begin{eqnarray}
		& & \int_{\Omega\times \left(\kappa T,\left(\frac{1+\kappa}{2}\right)T\right)} |\Phi(x,t)|^2 \mathrm{d} x \mathrm{~d} t \leq  C\frac{\hat{h}\left(\frac{T}{2}\right)}{\check{h}(\kappa_{0}T)}\int_{\omega_{T}}|\Phi(x,t)|^2 \mathrm{~d} x \mathrm{~d}t.  \label{ee2}
	\end{eqnarray}
	Using \eqref{ee2} and the transformation \eqref{chang2}, we obtain
	\begin{eqnarray}
		& & \int_{\Omega\times \left(\kappa T,\left(\frac{1+\kappa}{2}\right)T\right)} |\varphi(x,t)|^2 \mathrm{d} x \mathrm{~d} t  \leq  \exp\left(\frac{C_{1}}{\varepsilon}\left(1+\frac{1}{T}\right)\right)\int_{\omega_{T}}|\varphi(x,t)|^2 \mathrm{~d} x \mathrm{~d}t.  \label{e2}
	\end{eqnarray}
	for some $C_{1}$ depends only on $\Omega$, $\omega$, $\kappa$ and $\mathcal{C}_{\infty}(f)$.\\
	Using the dissipation estimate \eqref{A3}
	for the solutions of \eqref{s2} with $\mathfrak{B}=\nabla f$, we get for all $ 0\leqslant t\leqslant s\leqslant T$,
	\begin{eqnarray*}
		\int_{\Omega}|\varphi(x,t)|^{2}\mathrm{~d}x&\leqslant& \exp\left(\|\Delta f\|_{L^{\infty}(\Omega_{T})}T \right)
		\int_{\Omega}|\varphi(x,s)|^{2}\mathrm{~d}x.
	\end{eqnarray*}
	By integrating this inequality on $\left(\kappa T,\left(\frac{1+\kappa}{2}\right)T \right)$, we obtain for all $t\in [0,\kappa T]$,
	\begin{eqnarray}
		\int_{\Omega}|\varphi(x,t)|^{2}dx\leqslant \frac{2\exp\left(\|\Delta f\|_{L^{\infty}(\Omega_{T})}T \right)}{(1-\kappa)T}
		\int_{\Omega\times \left(\kappa T,\left(\frac{1+\kappa}{2}\right)T\right)}|\varphi(x,s)|^{2}\mathrm{~d} x \mathrm{~d}s.\label{e3}
	\end{eqnarray}
	Based on \eqref{e2} and \eqref{e3}, we obtain \eqref{io1}.
\end{proof}

\section{Conclusions} \label{Section 5}
In this paper, we have partially answered the interesting open problem proposed in Remark 3 of \cite{guerrero2007singular}. We have established that the control cost is uniform for a sufficiently small diffusivity when the time control is sufficiently large in the case where each trajectory of the velocity of the posed system coming from the domain
enters the control region in a shorter time for a fixed input time. It can also be established that the controllability cost tends towards $0$ exponentially for a sufficiently small diffusivity when the time control is sufficiently large in the case where each trajectory coming from the domain
exits the domain in a shorter time for a fixed exit time, as shown by Theorem 2 in \cite{guerrero2007singular} in the case of Dirichlet conditions. An interesting question is to establish the same results for general transport field. For this, Agmon inequality and dissipation results are shown in this article, and it remains to establish a Carleman estimate for general transport field. 
%\section{Appendices}
\section*{Appendix A} 
\textbf{Proof of Theorem \ref{m1}:}
According to estimates \eqref{io1} and \eqref{disspation 3} we have:\\
For all $0<\kappa<1$, there are two constants $C$ independent of $\varepsilon$, $C_{1}>0$ independent of $\varepsilon$ and $T$ such that, for all $t\in [0,\kappa T]$, we have 
\begin{equation}
	\|\varphi(\cdot,t)\|^{2}_{L^{2}(\Omega)} \leqslant C\exp\left(\frac{C_{1}}{\varepsilon}\left(1+\frac{1}{T}\right)\right)\|\varphi\|^{2}_{L^{2}(\omega_{T})} \label{e4}
\end{equation}
and there is a constant $C_{0}>0$ independent of $\varepsilon$ and $T$ (Note that $C_{0}$ independent of $T$, because it can be taken to depend on $T_{0}$, $r_{0}$ and $\|\mathbf{B}\|_{L^{\infty}(0,\infty;W^{1,\infty}(\Omega)^{d})}$) such that, for any integer $m$ such that $1\leqslant m\leqslant\frac{T}{T_{0}}$, there exists $C^{\prime}>0$  independent of $\varepsilon$ such that, for all $t\in [mT_{0},T]$, we have 
\begin{equation} 
	\|\varphi(\cdot, 0)\|^{2}_{L^{2}(\Omega)}\leqslant C^{\prime}\left(\exp\left(\frac{-mC_{0}}{\varepsilon}\right)\|\varphi(\cdot, t)\|^{2}_{L^{2}(\Omega)}+\|\varphi\|^{2}_{L^{2}(\omega_{T})}\right). \label{e5}
\end{equation}
For $0<\kappa<1$ fixed, taking $m:=\left[\frac{C_{1}}{C_{0}} \right] +1$ where $\left[\frac{C_{1}}{C_{0}} \right]$ denotes the integer part of $\frac{C_{1}}{C_{0}}$ and $\rho_{0}:=\max\left(\frac{m}{\kappa},\frac{1}{T_{0}\left(m\frac{C_{0}}{C_{1}}-1\right)}\right)$.\\
Let $T\geq \rho_{0} T_{0}$. Then $mT_{0}\leq \kappa T$, so applying \eqref{e4} and \eqref{e5}, we get
\begin{equation} \label{e6}
	\|\varphi(\cdot, 0)\|^{2}_{L^{2}(\Omega)}\leqslant C^{\prime}\left(C\exp\left(\frac{C_{1}\left(1+1/T\right)-mC_{0}}{\varepsilon}\right)+1\right)\|\varphi\|^{2}_{L^{2}(\omega_{T})}.
\end{equation} 
On the other hand, the choice of $\rho_{0}$ implies that  $C_{1}\left(1+1/T\right)-mC_{0}\leq 0$ for all $T\geq \rho_{0} T_{0}$. Finally combining \eqref{e6} and \eqref{cost of control}, we obtain \eqref{cost1} for $\varepsilon>0$ small enough and $T\geq \rho_{0} T_{0}$. \qed 
\section*{Appendix B}
\textbf{Proof of Theorem \ref{m2}:} Let $x_{0}\in\Omega$ such that condition \ref{C3m2} of Theorem \ref{m2} is fulfilled. From the continuity of $x\mapsto\Phi(t,T,x)$ uniform in $t$, there exists $r_{0}>0$ such that,
$$\Phi(t,T,x)\in \Omega\setminus\overline{\omega}\quad\forall t\in [0,T],\;\forall x\in \overline{B}(x_{0},4r_{0}).$$
Consider $\varphi_{T}\in D(B(x_{0},r_{0}))$ and $\varphi$ be the weak solution of \eqref{s2} with the data $\varphi_{T}$. If necessary to extend $\mathfrak{B}$ by a function in $L^{\infty}(0,T;W^{1,\infty}(\mathbb{R}^{d})^{d})$ (note that this extension is not unique, but this proof does not depend on the extension), we can apply the Lemma \ref{lemma}, let $\theta$ the function defined in Lemma \ref{lemma} with this choice of $x_{0}$, $r=r_{0}$, $t_{1}=0$, $t_{2}=T$.\\ Let $\vartheta_{1}$ and $\vartheta_{2}$ be regular functions such that
\begin{equation*}
	\left\{
	\begin{aligned}
		\vartheta_{1}(x,t) &=0 & & \forall (x,t)\in \mathcal{D}_{2r_{0}}(x_{0},0,T), \\
		\vartheta_{1}(x,t) &=1 & & \forall (x,t)\notin \mathcal{D}_{3r_{0}}(x_{0},0,T)	
	\end{aligned}
	\right.\quad\text{and}\quad 	
	\left\{
	\begin{aligned}
		\vartheta_{2}(x,t) &=0 & & \forall (x,t)\in \mathcal{D}_{3r_{0}}(x_{0},0,T), \\
		\vartheta_{2}(x,t) &=1 & & \forall (x,t)\notin \mathcal{D}_{4r_{0}}(x_{0},0,T).	
	\end{aligned}
	\right.
\end{equation*}
For reasons of simplicity, we will divide the proof into three steps.\\
\textbf{\underline{Step 1.}} We will show that there are $C_{1},C_{2}>0$ independent of $\varepsilon$ such that
\begin{equation} \label{7}
	\int_{\Omega_{T}}|\vartheta_{1}\varphi|^{2}\mathrm{~d}x\mathrm{~d}t+
	\int_{\Omega_{T}}|\nabla(\vartheta_{1}\varphi)|^{2}\mathrm{~d}x\mathrm{~d}t\leqslant C_{1}\exp\left(-\frac{C_{2}}{\varepsilon}\right)\int_{\Omega}|\psi(x,T)|^{2}\mathrm{~d}x,\\
\end{equation}
where $\psi=\exp\left(\frac{\theta}{\varepsilon}\right)\varphi$ and $T, \varepsilon$ are small enough.\\
Indeed, for all $(x,t)\in\text{supp}(\vartheta_{1})$, we have $\theta(x,t)\geq c_{0}r_{0}^{2}$, then 
\begin{eqnarray} \label{17}
	\int_{\Omega_{T}}|\vartheta_{1}\varphi|^{2}\mathrm{~d}x\mathrm{~d}t&\leqslant &  \exp\left(-\frac{2c_{0}r_{0}^{2}}{\varepsilon}\right)\int_{\Omega_{T}}|\vartheta_{1}\psi|^{2}\mathrm{~d}x\mathrm{~d}t \nonumber\\
	&\leqslant & \|\vartheta_{1}\|^{2}_{\infty}\exp\left(-\frac{2c_{0}r_{0}^{2}}{\varepsilon}\right)\int_{\Omega_{T}}|\psi|^{2}\mathrm{~d}x\mathrm{~d}t.
\end{eqnarray}
From $\nabla(\vartheta_{1}\varphi)=\exp\left(-\frac{\theta}{\varepsilon}\right)\left(\psi\left(\nabla\vartheta_{1}-\vartheta_{1}\frac{\nabla\theta}{\varepsilon}\right)+\vartheta_{1}\nabla \psi\right)$, we obtain
\begin{eqnarray} \label{18}
	\int_{\Omega_{T}}|\nabla(\vartheta_{1}\varphi)|^{2}\mathrm{~d}x\mathrm{~d}t&\leqslant &  4\exp\left(-\frac{2c_{0}r_{0}^{2}}{\varepsilon}\right)\left(\|\nabla\vartheta_{1}\|^{2}_{\infty}+\frac{\|\vartheta_{1}\nabla\theta\|^{2}_{\infty}}{\varepsilon^{2}}\right)\int_{\Omega_{T}}|\psi|^{2}\mathrm{~d}x\mathrm{~d}t \nonumber\\
	&& + 2\exp\left(-\frac{2c_{0}r_{0}^{2}}{\varepsilon}\right) \|\vartheta_{1}\|^{2}_{\infty}\int_{\Omega_{T}}|\nabla \psi|^{2}\mathrm{~d}x\mathrm{~d}t. 
\end{eqnarray}
By application of Agmon inequality \eqref{A1}, we get 
\begin{equation} \label{19}
	\int_{\Omega_{T}}|\psi|^{2}\mathrm{~d}x\mathrm{~d}t\leqslant T\exp\left(\frac{C}{\varepsilon}T\right)\int_{\Omega}|\psi(x,T)|^{2}\mathrm{~d}x
\end{equation}
and 
\begin{equation} \label{20}
	\int_{\Omega_{T}}|\nabla \psi|^{2}\mathrm{~d}x\mathrm{~d}t\leqslant \exp\left(\frac{C}{\varepsilon}T\right)\int_{\Omega}|\psi(x,T)|^{2}\mathrm{~d}x.
\end{equation}
Taking $0<T<\frac{2c_{0}r_{0}^{2}}{C}$ and $\varepsilon$ small enough, from \eqref{17}-\eqref{20}, we obtain \eqref{7}.\\
\textbf{\underline{Step 2.}} We will prove that there are
$C_{1},C_{2}>0$ independent of $\varepsilon$ such that
\begin{equation} \label{10}
	\int_{\omega_{T}}|\varphi|^{2}\mathrm{~d}x\mathrm{~d}t\leqslant C_{1}\exp\left(-\frac{C_{2}}{\varepsilon}\right)\int_{\Omega}|\varphi_{T}|^{2}\mathrm{~d}x,
\end{equation}
for $T$ and $\varepsilon$ are small enough.\\
Let $\phi=\vartheta_{2}\varphi$, then $\phi$ is the solution of system $\mathcal{S}(\varnothing,0,T, F,0,\varepsilon,\mathfrak{B})$, 
where $F:=(\partial_t \vartheta_{2}+\varepsilon\Delta\vartheta_{2}-\mathfrak{B}(x,t)\cdot\nabla\vartheta_{2})\varphi+2\varepsilon\nabla\vartheta_{2}\cdot\nabla\varphi$.\\
From estimate \eqref{dissw}, we obtain  
\begin{equation} \label{diss11}
	\int_{\Omega}|\phi(x,t)|^{2}\mathrm{~d}x\leqslant C\exp\left(\frac{CT}{\varepsilon}\right)\int_{\Omega_{T}}|F|^{2}\mathrm{~d}x\mathrm{~d}t \quad\forall t\in [0,T],
\end{equation}
for $\varepsilon$ is small enough and $C>0$ independent of $\varepsilon$.\\
Since $\vartheta_{1}=1$ on the supports of the functions  $\partial_t \vartheta_{2}$ and $\nabla\vartheta_{2}$, we obtain from \eqref{7} the existence of constants $C_{1},C_{2}>0$ independent of $\varepsilon$ such that
\begin{eqnarray} \label{diss10}
	\int_{\Omega_{T}}|F|^{2}\mathrm{~d}x\mathrm{~d}t &\leqslant& C_{1}\exp\left(-\frac{C_{2}}{\varepsilon}\right)\int_{\Omega}|\psi(x,T)|^{2}\mathrm{~d}x = C_{1}\exp\left(-\frac{C_{2}}{\varepsilon}\right)\int_{\Omega}|\varphi_{T}|^{2}\mathrm{~d}x,
\end{eqnarray}
due to $\theta(\cdot,T)=0$ on the support of $\varphi_{T}$.\\
Using \eqref{diss11} and \eqref{diss10}, there exists a constant $C^{\prime}_{1}>0$ independent of $\varepsilon$ such that
\begin{equation*}
	\int_{\Omega_{T}}|\phi|^{2}\mathrm{~d}x\mathrm{~d} t\leqslant C_{1}^{\prime}\exp\left(-\frac{C_{2}}{\varepsilon}\right)\int_{\Omega}|\varphi_{T}|^{2}\mathrm{~d}x,
\end{equation*}
for $T>0$ and $\varepsilon$ are small enough, since $\vartheta_{2}=1$ on $\omega_{T}$, we deduce that \eqref{10} is true.\\
\textbf{\underline{Step 3.}} Finally, since $\varphi$ is the weak solution of adjoint system \eqref{s2}, from \eqref{weak solution}, we obtain for all $v\in D(]0,T[)$
\begin{eqnarray*}
	\int_{0}^{T}\left(\int_{\Omega}\varphi(x,t)\d x\right) v^{\prime}(t)\d t=-\int_{0}^{T}\mathfrak{a}_{w}(T-t,\varphi(t),v(t))\d t=0,
\end{eqnarray*}
since $v$ independent of $x$. Hence $t\longmapsto \displaystyle\int_{\Omega}\varphi(x,t)\d x$ is weakly differentiable and $\displaystyle\frac{\d}{\d t} \int_{\Omega}\varphi(x,t)\d x=0$, thus
\begin{equation} 
	\int_{\Omega}\varphi(x,0)\mathrm{~d}x=\int_{\Omega}\varphi(x,T)\mathrm{~d}x. \label{e1}
\end{equation}
Choosing the initial data $\varphi_{T}\in D(B(x_{0},r_{0}))$ such that $\displaystyle\int_{\Omega}\varphi_{T}(x)\mathrm{~d}x\neq 0$. By Hölder's inequality and \eqref{e1}, we have 
\begin{equation} \label{11}
	\int_{\Omega}|\varphi(x,0)|^{2}\mathrm{~d}x\geq\frac{1}{|\Omega|}\left|	\int_{\Omega}\varphi_{T}(x)\mathrm{~d}x\right|^{2}.
\end{equation}
Finally, combining \eqref{10}, \eqref{11} and \eqref{cost of control}, we obtain \eqref{cost2}.\qed
\section*{Appendix C} 
\textbf{Proof of Proposition \ref{P4}:} To derive the global estimate \eqref{Carleman}, we will give the proof in several steps. 
Initially, a change of variables is implemented to acquire functions that display decay characteristics at both the initial time $t=0$ and the final time $t=T$.
Subsequently, we assess and approximate the scalar product that arises naturally during the change of variables.
Afterwards, we draw preliminary conclusions by examining the boundary terms on the left-hand side of the inequality.
We then estimate the local gradient term.
Additionally, we simplify the boundary terms and revert the change of variables to obtain the desired estimate.\\
\par 
Throughout the proof $\|\cdot\|_{\infty}$ designates the norm $\|\cdot\|_{L^{\infty}(\Omega_{T})}$. The constants $C$, $c$, $s_{1}$ and $\lambda_{1}$ will denote generic constants which are independent of $\varepsilon$, $s$, $\lambda$ and $\mathfrak{B}$. These constants may vary even from line to line.\\
\par To summarize the proof a little,  
we will use the conclusions of steps 2a, 2b and 2c in \cite[Proof of Proposition 5]{et2024asymptotic}.\\ 
%The main novelty will be on how to deal with the term $\mathfrak{B}\cdot\nabla\psi_\pm$.\\
\textbf{\underline{Step 1.} An auxiliary problem.} Using the density argument explained in \cite{barcena2021cost} before the proof of Proposition 3.5, we can make computations with $\Phi$ sufficiently regular that we can proceed to integration by parts involving the Laplacian term, and preserve the boundary conditions of \eqref{S3}. Let $\lambda\geq 1$, $s\geq 1$ parameters to be specified. Define 
\begin{eqnarray}
	\psi_{\pm}:=\exp(-s\alpha_{\pm})\Phi \quad \mbox{and} \quad F_{\pm}:=\exp(-s\alpha_{\pm})(\partial_{t}\Phi+\varepsilon\Delta\Phi-a_{\varepsilon}(f)\Phi). \label{c-4}
\end{eqnarray}
\par
We recall the definition of the tangential derivative $\nabla_{\Gamma}$ of a regular function $h\in \mathcal{C}^{1}(\overline{\Omega})$ is given by  $\nabla_{\Gamma}h:=\nabla h-(\partial_{\n}h)\n$
and that this definition depends only on the image of $h$ on $\Gamma$. Since $\alpha_{+}=\alpha_{-}$ on $\Gamma_{T}$, then 
\begin{eqnarray}
	\psi_{+}=\psi_{-}\quad \mbox{and} \quad \nabla_{\Gamma}\psi_{+}=\nabla_{\Gamma}\psi_{-}\;\;\text{on}\;\;\Gamma_{T}. \label{c5}
\end{eqnarray}
On $\Gamma_{T}$ we will note respectively $\psi$ and $\nabla_{\Gamma}\psi$ instead of $\psi_{\pm}$ and $\nabla_{\Gamma}\psi_{\pm}$.\\
\par
We determine
the problem solved by $\psi_{\pm}$. We first expand the spatial derivatives of $\alpha_{\pm}$ by the chain
rule to bring $\eta$ into play, but we do not expand $\partial_{t}\alpha_{\pm}$. We calculate
\begin{eqnarray}
	\nabla\alpha_{\pm}&=& -\nabla\xi_{\pm}=\mp\lambda\xi_{\pm}\nabla\eta \nonumber\\
	\Delta\alpha_{\pm} &=& 	-\lambda^{2}\xi_{\pm}|\nabla\eta|^{2} \mp\lambda\xi_{\pm}\Delta\eta \nonumber\\
	\partial_{t}\psi_{\pm} &=& \exp(-s\alpha_{\pm})\partial_{t}\Phi-s\partial_{t}\alpha_{\pm}\psi_{\pm} \label{c-6}\\
	\nabla\psi_{\pm}&=&\exp(-s\alpha_{\pm})\nabla\Phi-s\psi_{\pm}\nabla\alpha_{\pm} =\exp(-s\alpha_{\pm})\nabla\Phi \pm s\lambda\xi_{\pm}\psi_{\pm}\nabla\eta \label{c-5}\\
	\partial_{\n}\psi_{\pm} &=& \exp(-s\alpha_{\pm})\partial_{\n}\Phi \pm s\lambda\xi_{\pm}\psi_{\pm}\partial_{\n}\eta = \exp(-s\alpha)\partial_{\n}\Phi \pm s\lambda\xi\psi\partial_{\n}\eta
	\label{c-3}\\
	\Delta\psi_{\pm} &=& \exp(-s\alpha_{\pm})\Delta\Phi+\nabla(\exp(-s\alpha_{\pm}))\cdot\nabla\Phi-s\psi_{\pm}\Delta\alpha_{\pm} -s(\nabla\psi_{\pm}\cdot\nabla\alpha_{\pm}) \nonumber\\
	&=& \exp(-s\alpha_{\pm})\Delta\Phi-s^{2}\psi_{\pm}|\nabla\alpha_{\pm}|^{2} -2s(\nabla\psi_{\pm}\cdot\nabla\alpha_{\pm})-s\psi_{\pm}\Delta\alpha_{\pm} \nonumber\\
	&=& \exp(-s\alpha_{\pm})\Delta\Phi	-s^{2}\lambda^{2}\xi_{\pm}^{2}\psi_{\pm}|\nabla\eta|^{2} \pm 2s\lambda\xi_{\pm}(\nabla\eta\cdot\nabla\psi_{\pm}) \nonumber\\
	& & + s\lambda^{2}\xi_{\pm}\psi_{\pm}|\nabla\eta|^{2}
	\pm s\lambda\xi_{\pm}\psi_{\pm}\Delta\eta. \nonumber
\end{eqnarray}
On $\Omega_{T}$ this yields transformed evolution equations
\begin{eqnarray*}
	\partial_{t}\psi_{\pm}+\varepsilon\Delta\psi_{\pm}-a_{\varepsilon}(f)\psi_{\pm} &=& F_{\pm}-s\partial_{t}\alpha_{\pm}\psi_{\pm}
	-\varepsilon s^{2}\lambda^{2}\xi^{2}_{\pm}|\nabla\eta|^{2}
	\psi_{\pm} \pm 2\varepsilon s\lambda \xi_{\pm}(\nabla\eta\cdot\nabla\psi_{\pm})\\
	&& + \varepsilon s\lambda^{2}\xi_{\pm}|\nabla\eta|^{2}\psi_{\pm}
	\pm  \varepsilon s \lambda\xi_{\pm}\Delta\eta\psi_{\pm}.
\end{eqnarray*}
We rewrite this equality as 
\begin{eqnarray}
	L_{1}\psi_{\pm}+L_{2}\psi_{\pm}=L_{3}\psi_{\pm}, \label{c1}
\end{eqnarray}
where 
\begin{eqnarray}
	L_{1}\psi_{\pm}&:=& - 2\varepsilon s\lambda^{2}\xi_{\pm}|\nabla\eta|^{2}\psi_{\pm}\mp 2\varepsilon s\lambda\xi_{\pm}(\nabla\eta\cdot\nabla\psi_{\pm})+\partial_{t}\psi_{\pm}, \nonumber \\
	L_{2}\psi_{\pm} &:=& \varepsilon s^{2}\lambda^{2}\xi_{\pm}^{2}|\nabla\eta|^{2}\psi_{\pm}+\varepsilon\Delta\psi_{\pm}+s\partial_{t}\alpha_{\pm}\psi_{\pm}-\frac{\mathcal{V}(f)}{\varepsilon}\psi_{\pm}, \label{c-1}\\
	L_{3}\psi_{\pm}&:=& F_{\pm} \pm \varepsilon s\lambda\xi_{\pm}\Delta\eta\psi_{\pm}
	-\varepsilon s\lambda^{2}\xi_{\pm}|\nabla\eta|^{2}\psi_{\pm}-\frac{\Delta f}{2}\psi_{\pm}. \label{c0}
\end{eqnarray}
\begin{remark}
	In this decomposition, we have split the potential term $a_{\varepsilon}(f)$ into two parts $\frac{\Delta f}{2}$ and $\frac{\mathcal{V}(f)}{\varepsilon}$ in order to absorb the terms associated with constraint $s\geq \frac{C}{\varepsilon}$.
\end{remark}
Applying $\|\cdot\|_{L^{2}(\Omega_{T})}^2$ to the equation \eqref{c1}, we obtain 
\begin{eqnarray}
	\|L_{1}\psi_{\pm}\|^{2}_{L^{2}(\Omega_{T})}+2	(L_{1}\psi_{\pm},L_{2}\psi_{\pm})_{L^{2}(\Omega_{T})}+\|L_{2}\psi_{\pm}\|^{2}_{L^{2}(\Omega_{T})}=\|L_{3}\psi_{\pm}\|^{2}_{L^{2}(\Omega_{T})}. \label{c2}
\end{eqnarray}
\textbf{\underline{Step 2.} Estimating the mixed terms in \eqref{c2} from below.}
The main idea is to expand the term $(L_{1}\psi_{\pm},L_{2}\psi_{\pm})_{L^{2}(\Omega_{T})}$ and use the particular structure of $\alpha_{\pm}$ and the fact
that $s$ is large enough in order to obtain large positive terms in this scalar product. Denoting by $\left(L_{i}\psi_{\pm}\right)_{j}$ the $j$-th term in the above expression of $L_{i}\psi_{\pm}$. We have
\begin{eqnarray*}
	(L_{1}\psi_{\pm},(L_{2}\psi_{\pm})_{j})_{L^{2}(\Omega_{T})}&=&\sum_{i=1}^{3}((L_{1}\psi_{\pm})_{i},(L_{2}\psi_{\pm})_{j})_{L^{2}(\Omega_{T})},\; j=1,\cdots,4,\\
	(L_{1}\psi_{\pm},L_{2}\psi_{\pm})_{L^{2}(\Omega_{T})}&=&\sum_{j=1}^{4}(L_{1}\psi_{\pm},(L_{2}\psi_{\pm})_{j})_{L^{2}(\Omega_{T})}.
\end{eqnarray*}
Let us compute each term $(L_{1}\psi_{\pm},(L_{2}\psi_{\pm})_{j})_{L^{2}(\Omega_{T})}$, $j=1,\cdots,4$.\\
\textbf{\underline{Step $2a$.} Estimate from below of $(L_{1}\psi_{\pm},(L_{2}\psi_{\pm})_{1})_{L^{2}(\Omega_{T})}$}.\\
The term $(L_{1}\psi_{\pm},(L_{2}\psi_{\pm})_{1})_{L^{2}(\Omega_{T})}$ is exactly treated in \cite{et2024asymptotic}. From the conclusion of Step 2a in \cite[Proof of Proposition 5]{et2024asymptotic}, we have
	\begin{eqnarray}
		(L_{1}\psi_{\pm},(L_{2}\psi_{\pm})_{1})_{L^{2}(\Omega_{T})}&\geq&  c\,\varepsilon^{2}s^{3}\lambda^{4}\int_{\Omega_{T}}\xi_{\pm}^{3}|\psi_{\pm}|^{2}\mathrm{~d}x\mathrm{~d}t -C\varepsilon^{2}s^{3}\lambda^{4}\int_{\omega^{\prime}_{T}}\xi_{\pm}^{3}|\psi_{\pm}|^{2}\mathrm{~d}x\mathrm{~d}t \nonumber\\
&& \mp\varepsilon^{2}s^{3}\lambda^{3}\int_{\Gamma_{T}}\xi^{3} |\nabla\eta|^{2}\partial_{\n}\eta|\psi|^{2}\mathrm{~d}\sigma\mathrm{~d}t, \label{c8}
\end{eqnarray}
for any $\lambda\geq \lambda_{1}$ and any $s\geq s_{1}\frac{T}{\varepsilon}$.\\\\
\textbf{\underline{Step $2b$.} Estimate from below of $(L_{1}\psi_{\pm},(L_{2}\psi_{\pm})_{2})_{L^{2}(\Omega_{T})}$.}\\
Similarly, the term $(L_{1}\psi_{\pm},(L_{2}\psi_{\pm})_{2})_{L^{2}(\Omega_{T})}$ is exactly treated in \cite{et2024asymptotic}. Using the same computations and arguments of Step 2b in \cite[Proof of Proposition 5]{et2024asymptotic}, one has 
\begin{eqnarray}
	(L_{1}\psi_{\pm},(L_{2}\psi_{\pm})_{2})_{L^{2}(\Omega_{T})}&\geq& -2\varepsilon^{2}s\lambda^{2}\int_{\Gamma_{T}}\xi|\nabla\eta|^{2}\psi\partial_{\n}\psi_{\pm}\mathrm{~d}\sigma\mathrm{~d}t -C\varepsilon^{2}s^{2}\lambda^{4}\int_{\Omega_{T}} \xi_{\pm}^{2}|\psi_{\pm}|^{2}\mathrm{~d}x\mathrm{~d}t + c\,\varepsilon^{2}s\lambda^{2}\int_{\Omega_{T}}\xi_{\pm}|\nabla\psi_{\pm}|^{2}\mathrm{~d}x\mathrm{~d}t\nonumber\\
	&& -C\varepsilon^{2}s\lambda^{2}\int_{\omega^{\prime}_{T}}\xi_{\pm}|\nabla\psi_{\pm}|^{2}\mathrm{~d}x\mathrm{~d}t \mp 2\varepsilon^{2}s\lambda\int_{\Gamma_{T}}\xi\partial_{\n}\eta |\partial_{\n}\psi_{\pm}|^{2}\mathrm{~d}\sigma\mathrm{~d}t \nonumber\\
	&& \pm \varepsilon^{2}s\lambda\int_{\Gamma_{T}}\xi\partial_{\n}\eta|\nabla\psi_{\pm}|^{2}\mathrm{~d}\sigma\mathrm{~d}t+\varepsilon\int_{\Gamma_{T}}\partial_{t}\psi\partial_{\n}\psi_{\pm}\mathrm{~d}\sigma\mathrm{~d}t. \label{c12}
\end{eqnarray}
for any $\lambda\geq\lambda_{1}$ and any $s\geq s_{1}T^{2}$.\\
\textbf{\underline{Step 2c.} Estimate from below of $(L_{1}\psi_{\pm},(L_{2}\psi_{\pm})_{3})_{L^{2}(\Omega_{T})}$.}\\
From the conclusion of Step 2c in \cite[Proof of Proposition 5]{et2024asymptotic}, we have
\begin{eqnarray}
	&& (L_{1}\psi_{\pm},(L_{2}\psi_{\pm})_{3})_{L^{2}(\Omega_{T})}\geq -C\varepsilon^{2} s^{3}\lambda^{2}\int_{\Omega_{T}} \xi_{\pm}^{3}|\psi_{\pm}|^{2}\mathrm{~d}x\mathrm{~d}t \mp \varepsilon s^{2}\lambda\int_{\Gamma_{T}}\xi \partial_{t}\alpha\partial_{\n}\eta|\psi|^{2}\mathrm{~d}\sigma\mathrm{~d}t,
	\label{c16}
\end{eqnarray}
for any $\lambda\geq 1$ and any $s\geq s_{1} \frac{T}{\varepsilon}$.\\
\textbf{\underline{Step 2d.} Estimate from below of $(L_{1}\psi_{\pm},(L_{2}\psi_{\pm})_{4})_{L^{2}(\Omega_{T})}$.}\\
In this step, there are differences from step 2d of \cite[Proof of Proposition 5]{et2024asymptotic} due to the time dependency of $f$.
%In particular, $\mathcal{V}(f)=\frac{|\nabla f|^{2}}{4}+\frac{\partial_{t}f}{2}$.} 
Let us now consider double products involving $(L_{2}\psi_{\pm})_{4}$. First, we have
\begin{eqnarray*}
	((L_{1}\psi_{\pm})_{1},(L_{2}\psi_{\pm})_{4})_{L^{2}(\Omega_{T})}=2s\lambda^{2}
	\int_{\Omega_{T}} \xi_{\pm}\mathcal{V}(f)|\nabla\eta|^{2}|\psi_{\pm}|^{2}\mathrm{~d}x\mathrm{~d}t.
\end{eqnarray*}
Since $\xi_{\pm}\geq\frac{4}{T^{2}}$, then
\begin{eqnarray}
	|((L_{1}\psi_{\pm})_{1},(L_{2}\psi_{\pm})_{4})_{L^{2}(\Omega_{T})}|
	&\leqslant& C(\|\nabla f\|^{2}_{\infty}+\|\partial_{t}f\|_{\infty})s\lambda^{2}T^{4}\int_{\Omega_{T}} \xi_{\pm}^{3}|\psi_{\pm}|^{2}\mathrm{~d}x\mathrm{~d}t.  \label{c17}
\end{eqnarray}
Next, it is obvious that
\begin{eqnarray*}
	((L_{1}\psi_{\pm})_{2},(L_{2}\psi_{\pm})_{4})_{L^{2}(\Omega_{T})}&=& \pm 2s\lambda\int_{\Omega_{T}} \xi_{\pm}\mathcal{V}(f)(\nabla\eta\cdot\nabla\psi_{\pm})\psi_{\pm}\mathrm{~d}x\mathrm{~d}t\\
	&=& \pm \frac{s\lambda}{2}\int_{\Omega_{T}} \xi_{\pm}|\nabla f|^{2}(\nabla\eta\cdot\nabla\psi_{\pm})\psi_{\pm}\mathrm{~d}x\mathrm{~d}t \pm s\lambda\int_{\Omega_{T}} \xi_{\pm}\partial_{t}f(\nabla\eta\cdot\nabla\psi_{\pm})\psi_{\pm}\mathrm{~d}x\mathrm{~d}t.
\end{eqnarray*}
After an integration by parts, we find
\begin{eqnarray*}
	\pm \frac{s\lambda}{2}\int_{\Omega_{T}} \xi_{\pm}|\nabla f|^{2}(\nabla\eta\cdot\nabla\psi_{\pm})\psi_{\pm}\mathrm{~d}x\mathrm{~d}t &=& \pm  \frac{s\lambda}{4}\int_{\Gamma_{T}}\xi |\nabla f|^{2}\partial_{\n}\eta|\psi|^{2}\mathrm{~d}\sigma\mathrm{~d}t \mp \frac{s\lambda}{4}\int_{\Omega_{T}} \xi_{\pm}(\nabla (|\nabla f|^{2})\cdot\nabla\eta)|\psi_{\pm}|^{2}\mathrm{~d}x\mathrm{~d}t \\
 &&- \frac{s\lambda^{2}}{4}\int_{\Omega_{T}}\xi_{\pm}|\nabla f|^{2}|\nabla\eta|^{2}|\psi_{\pm}|^{2}\mathrm{~d}x\mathrm{~d}t  \mp \frac{s\lambda}{4}\int_{\Omega_{T}} \xi_{\pm}|\nabla f|^{2}\Delta\eta|\psi_{\pm}|^{2}\mathrm{~d}x\mathrm{~d}t.
\end{eqnarray*}	
Using $\nabla|\nabla f|^{2}\cdot\nabla\eta=2\nabla^{2}f(\nabla f,\nabla\eta)$, where $\nabla^{2}f$ denotes
the Hessian matrix of $f$ (it is considered as a symmetrical bilinear form), we obtain 	
\begin{eqnarray}
	\pm \frac{s\lambda}{2}\int_{\Omega_{T}} \xi_{\pm}|\nabla f|^{2}(\nabla\eta\cdot\nabla\psi_{\pm})\psi_{\pm}\mathrm{~d}x\mathrm{~d}t		
	&=& \pm  \frac{s\lambda}{4}\int_{\Gamma_{T}} \xi|\nabla f|^{2}\partial_{\n}\eta|\psi|^{2}\mathrm{~d}\sigma\mathrm{~d}t  \mp \frac{s\lambda}{2}\int_{\Omega_{T}} \xi_{\pm}\nabla^{2}f(\nabla f,\nabla\eta)|\psi_{\pm}|^{2}\mathrm{~d}x\mathrm{~d}t \nonumber\\
 &&- \frac{s\lambda^{2}}{4}\int_{\Omega_{T}} \xi_{\pm}|\nabla f|^{2}|\nabla\eta|^{2}|\psi_{\pm}|^{2}\mathrm{~d}x\mathrm{~d}t  \mp \frac{s\lambda}{4}\int_{\Omega_{T}} \xi_{\pm}|\nabla f|^{2}\Delta\eta|\psi_{\pm}|^{2}\mathrm{~d}x\mathrm{~d}t \label{inte}.
\end{eqnarray}
By Young inequality, $\lambda\geq 1$ and $\xi_{\pm}\geq\frac{4}{T^{2}}$, the first three terms in the right-hand side of \eqref{inte} can be bounded by 
\begin{eqnarray*}
	Cs\lambda^{2}T^{4}\left(\|\nabla^{2} f\|^{2}_{\infty}+\|\nabla f\|^{2}_{\infty}\right)\int_{\Omega_{T}}\xi_{\pm}^{3}|\psi_{\pm}|^{2}\mathrm{~d}x\mathrm{~d}t.
\end{eqnarray*}
Thus, we have
\begin{eqnarray}
	&&\pm \frac{s\lambda}{2}\int_{\Omega_{T}} \xi_{\pm}|\nabla f|^{2}(\nabla\eta\cdot\nabla\psi_{\pm})\psi_{\pm}\mathrm{~d}x\mathrm{~d}t \geq  \pm \frac{s\lambda}{4}\int_{\Gamma_{T}}\xi |\nabla f|^{2}\partial_{\n}\eta|\psi|^{2}\mathrm{~d}\sigma\mathrm{~d}t  - Cs\lambda^{2}T^{4}\left(\|\nabla^{2} f\|^{2}_{\infty}+\|\nabla f\|^{2}_{\infty}\right)\int_{\Omega_{T}}\xi_{\pm}^{3}|\psi_{\pm}|^{2}\mathrm{~d}x\mathrm{~d}t.  \nonumber\\ \label{c18}
\end{eqnarray}
Similarly, we obtain
\begin{eqnarray*}
	\pm s\lambda\int_{\Omega_{T}} \xi_{\pm}\partial_{t}f(\nabla\eta\cdot\nabla\psi_{\pm})\psi_{\pm}\mathrm{~d}x\mathrm{~d}t&=& \pm  \frac{s\lambda}{2}\int_{\Gamma_{T}}\xi \partial_{t}f\partial_{\n}\eta|\psi|^{2}\mathrm{~d}\sigma\mathrm{~d}t \mp \frac{s\lambda}{2}\int_{\Omega_{T}} \xi_{\pm}(\nabla (\partial_{t}f)\cdot\nabla\eta)|\psi_{\pm}|^{2}\mathrm{~d}x\mathrm{~d}t \\
 &&- \frac{s\lambda^{2}}{2}\int_{\Omega_{T}}\xi_{\pm}\partial_{t}f|\nabla\eta|^{2}|\psi_{\pm}|^{2}\mathrm{~d}x\mathrm{~d}t  \mp \frac{s\lambda}{2}\int_{\Omega_{T}} \xi_{\pm}\partial_{t}f\Delta\eta|\psi_{\pm}|^{2}\mathrm{~d}x\mathrm{~d}t \\
 &\geq& \pm  \frac{s\lambda}{2}\int_{\Gamma_{T}}\xi \partial_{t}f\partial_{\n}\eta|\psi|^{2}\mathrm{~d}\sigma\mathrm{~d}t- Cs\lambda^{2}T^{4}\left(\|\nabla(\partial_{t}f)\|_{\infty}+\|\partial_{t}f\|_{\infty}\right)\int_{\Omega_{T}}\xi_{\pm}^{3}|\psi_{\pm}|^{2}\mathrm{~d}x\mathrm{~d}t.
\end{eqnarray*}
Consequently 
\begin{eqnarray*}
	((L_{1}\psi_{\pm})_{2},(L_{2}\psi_{\pm})_{4})_{L^{2}(\Omega_{T})}&\geq&  \pm  \frac{s\lambda}{4}\int_{\Gamma_{T}} \xi|\nabla f|^{2}\partial_{\n}\eta|\psi|^{2}\mathrm{~d}\sigma\mathrm{~d}t  \pm  \frac{s\lambda}{2}\int_{\Gamma_{T}}\xi \partial_{t}f\partial_{\n}\eta|\psi|^{2}\mathrm{~d}\sigma\mathrm{~d}t\\
	&& - Cs\lambda^{2}T^{4}\left(\|\nabla^{2}f\|^{2}_{\infty}+\|\nabla f\|^{2}_{\infty}+\|\nabla(\partial_{t}f)\|_{\infty}+\|\partial_{t}f\|_{\infty}\right)\int_{\Omega_{T}}\xi_{\pm}^{3}|\psi_{\pm}|^{2}\mathrm{~d}x\mathrm{~d}t.
\end{eqnarray*}
By integration in time and $\psi_{\pm}(\cdot,0)=\psi_{\pm}(\cdot,T)=0$, we have
\begin{eqnarray}
	((L_{1}\psi_{\pm})_{3},(L_{2}\psi_{\pm})_{4})_{L^{2}(\Omega_{T})}&=& \frac{1}{2\varepsilon}\int_{\Omega_{T}}\partial_{t}\mathcal{V}(f) |\psi_{\pm}|^{2}\mathrm{~d}x\mathrm{~d}t \nonumber\\
 &\geq&  -\frac{CT^{6}}{\varepsilon}\left(\|\partial_{t}\nabla f\|^{2}_{\infty}+\|\nabla f\|^{2}_{\infty}+\|\partial_{t}^{2} f\|_{\infty}\right)\int_{\Omega_{T}}\xi_{\pm}^{3} |\psi_{\pm}|^{2}\mathrm{~d}x\mathrm{~d}t. \label{c19}
\end{eqnarray}
From \eqref{c17},\eqref{c18} and \eqref{c19}, we conclude that
\begin{eqnarray}
		(L_{1}\psi_{\pm},(L_{2}\psi_{\pm})_{4})_{L^{2}(\Omega_{T})} &\geq& \pm \frac{s\lambda}{4}\int_{\Gamma_{T}} \xi|\nabla f|^{2}\partial_{\n}\eta|\psi|^{2}\mathrm{~d}\sigma\mathrm{~d}t \pm  \frac{s\lambda}{2}\int_{\Gamma_{T}}\xi \partial_{t}f\partial_{\n}\eta|\psi|^{2}\mathrm{~d}\sigma\mathrm{~d}t \nonumber\\
  &&-Cs\lambda^{2}T^{4}\left(\|\nabla^{2} f\|^{2}_{\infty}+\|\nabla f\|^{2}_{\infty}\right)\int_{\Omega_{T}}\xi_{\pm}^{3}|\psi_{\pm}|^{2}\mathrm{~d}x\mathrm{~d}t \nonumber\\
		&& -\frac{CT^{6}}{\varepsilon}\left(\|\partial_{t}\nabla f\|^{2}_{\infty}+\|\nabla f\|^{2}_{\infty}+\|\partial_{t}^{2} f\|_{\infty}\right)\int_{\Omega_{T}}\xi_{\pm}^{3} |\psi_{\pm}|^{2}\mathrm{~d}x\mathrm{~d}t.
		\label{c20}
\end{eqnarray}
\textbf{\underline{Step 3.} First conclusion.}\\
Taking in account \eqref{c8}-\eqref{c16} and \eqref{c20}, for any $\lambda\geq \lambda_{1}$ and $s\geq s_{1}\left(\frac{T}{\varepsilon}+T^{2}\right) $, we obtain
\begin{eqnarray}
	(L_{1}\psi_{\pm},L_{2}\psi_{\pm})_{L^{2}(\Omega_{T})} &\geq&  c\,\varepsilon^{2}s^{3}\lambda^{4}\int_{\Omega_{T}}\xi_{\pm}^{3}|\psi_{\pm}|^{2}\mathrm{~d}x\mathrm{~d}t-C\varepsilon^{2}s^{3}\lambda^{4}\int_{\omega^{\prime}_{T}}\xi_{\pm}^{3}|\psi_{\pm}|^{2}\mathrm{~d}x\mathrm{~d}t \nonumber \mp\varepsilon^{2}s^{3}\lambda^{3}\int_{\Gamma_{T}} \xi^{3}|\nabla\eta|^{2}\partial_{\n}\eta\psi^{2}\mathrm{~d}\sigma\mathrm{~d}t \\
	&& - 2\varepsilon^{2}s\lambda^{2}\int_{\Gamma_{T}}\xi|\nabla\eta|^{2}\psi\partial_{\n}\psi_{\pm}\mathrm{~d}\sigma\mathrm{~d}t  -C\varepsilon^{2}s^{2}\lambda^{4}\int_{\Omega_{T}} \xi_{\pm}^{2}|\psi_{\pm}|^{2}\mathrm{~d}x\mathrm{~d}t + c\,\varepsilon^{2}s\lambda^{2}\int_{\Omega_{T}}\xi_{\pm}|\nabla\psi_{\pm}|^{2}\mathrm{~d}x\mathrm{~d}t \nonumber\\
	&& -C\varepsilon^{2}s\lambda^{2}\int_{\omega^{\prime}_{T}}|\nabla\psi_{\pm}|^{2}\xi_{\pm}\mathrm{~d}x\mathrm{~d}t \mp 2\varepsilon^{2}s\lambda\int_{\Gamma_{T}}\xi\partial_{\n}\eta|\partial_{\n}\psi_{\pm}|^{2}\mathrm{~d}\sigma\mathrm{~d}t  \pm  \varepsilon^{2}s\lambda\int_{\Gamma_{T}}\xi\partial_{\n}\eta|\nabla\psi_{\pm}|^{2}\mathrm{~d}\sigma\mathrm{~d}t\nonumber \\
 && +\varepsilon\int_{\Gamma_{T}}\partial_{t}\psi\partial_{\n}\psi_{\pm}\mathrm{~d}\sigma\mathrm{~d}t -C\varepsilon^{2} s^{3}\lambda^{2}\int_{\Omega_{T}} \xi_{\pm}^{3}|\psi_{\pm}|^{2}\mathrm{~d}x\mathrm{~d}t \pm\varepsilon s^{2}\lambda\int_{\Gamma_{T}}  \xi\partial_{t}\alpha\partial_{\n}\eta|\psi|^{2}\mathrm{~d}\sigma\mathrm{~d}t  \nonumber\\
 &&\pm \frac{s\lambda}{4}\displaystyle\int_{\Gamma_{T}} \xi|\nabla f|^{2}\partial_{\n}\eta|\psi|^{2}\mathrm{~d}\sigma\mathrm{~d}t \pm  \frac{s\lambda}{2}\int_{\Gamma_{T}}\xi \partial_{t}f\partial_{\n}\eta|\psi|^{2}\mathrm{~d}\sigma\mathrm{~d}t \nonumber\\
 &&-Cs\lambda^{2}T^{4}\left(\|\nabla^{2} f\|^{2}_{\infty}+\|\nabla f\|^{2}_{\infty}\right)\int_{\Omega_{T}}\xi_{\pm}^{3}|\psi_{\pm}|^{2}\mathrm{~d}x\mathrm{~d}t \nonumber\\
 && -\frac{CT^{6}}{\varepsilon}\left(\|\partial_{t}\nabla f\|^{2}_{\infty}+\|\nabla f\|^{2}_{\infty}+\|\partial_{t}^{2} f\|_{\infty}\right)\int_{\Omega_{T}}\xi_{\pm}^{3} |\psi_{\pm}|^{2}\mathrm{~d}x\mathrm{~d}t. \label{iint1}
\end{eqnarray}
Using \eqref{c2}, we obtain
\begin{eqnarray}
	&&	\|L_{1}\psi_{\pm}\|^{2}_{L^{2}(\Omega_{T})}+\|L_{2}\psi_{\pm}\|^{2}_{L^{2}(\Omega_{T})}+c\,\varepsilon^{2}s^{3}\lambda^{4}\int_{\Omega_{T}}\xi_{\pm}^{3}|\psi_{\pm}|^{2}\mathrm{~d}x\mathrm{~d}t+c\,\varepsilon^{2}s\lambda^{2}\int_{\Omega_{T}}\xi_{\pm}|\nabla\psi_{\pm}|^{2}\mathrm{~d}x\mathrm{~d}t + 2\; I_{\pm} \nonumber\\
	&& \quad\leqslant C\left( \|L_{3}\psi_{\pm}\|^{2}_{L^{3}(\Omega_{T})}+\varepsilon^{2}s^{3}\lambda^{4}\int_{\omega^{\prime}_{T}}\xi_{\pm}^{3}|\psi_{\pm}|^{2}\mathrm{~d}x\mathrm{~d}t +\varepsilon^{2}s\lambda^{2}\int_{\omega^{\prime}_{T}}\xi_{\pm}|\nabla\psi_{\pm}|^{2}\mathrm{~d}x\mathrm{~d}t   +\varepsilon^{2}s^{2}\lambda^{4}\int_{\Omega_{T}} \xi_{\pm}^{2}|\psi_{\pm}|^{2}\mathrm{~d}x\mathrm{~d}t  \right. \nonumber\\
	&& \left.  \quad\quad + \varepsilon^{2} s^{3}\lambda^{2}\int_{\Omega_{T}} \xi_{\pm}^{3}|\psi_{\pm}|^{2}\mathrm{~d}x\mathrm{~d}t + s\lambda^{2}T^{4}\left(\|\nabla^{2} f\|^{2}_{\infty}+\|\nabla f\|^{2}_{\infty}\right)\int_{\Omega_{T}}\xi_{\pm}^{3}|\psi_{\pm}|^{2}\mathrm{~d}x\mathrm{~d}t  \right. \nonumber
	\\
     &&	\left. \quad\quad -\frac{CT^{6}}{\varepsilon}\left(\|\partial_{t}\nabla f\|^{2}_{\infty}+\|\nabla f\|^{2}_{\infty}+\|\partial_{t}^{2} f\|_{\infty}\right)\int_{\Omega_{T}}\xi_{\pm}^{3} |\psi_{\pm}|^{2}\mathrm{~d}x\mathrm{~d}t
	\right), \label{c37}
\end{eqnarray}
where $I_{\pm}$ is the sum of all integrals on the boundary 
in the right-hand side of \eqref{iint1}. 
The last integral in the right hand side of \eqref{c37} can be absorbed by $\displaystyle c\varepsilon^{2}s^{3}\int_{\Omega_{T}}\xi_{\pm}^{3}|\psi_{\pm}|^{2}\mathrm{~d}x\mathrm{~d}t$ if $s\geq s_{1}\frac{T^{2}}{\varepsilon}\left(\|\nabla\partial_{t} f\|^{2/3}_{\infty}+\|\nabla f\|^{2/3}_{\infty}+ \|\partial_{t}^{2} f\|^{1/3}_{\infty}\right)$. Similarly, the second-to-last term can be absorbed by  $\displaystyle c\,\varepsilon^{2}s^{3}\int_{\Omega_{T}}\xi_{\pm}^{3}|\psi_{\pm}|^{2}\mathrm{~d}x\mathrm{~d}t$ if $\lambda\geq 1 $ and $s\geq s_{1}\frac{T^{2}}{\varepsilon}\left(\|\nabla^{2} f\|_{\infty}+\|\nabla f\|_{\infty}\right)$. Also, one can see that $\displaystyle\varepsilon^{2}s^{2}\lambda^{4}\int_{\Omega_{T}} \xi_{\pm}^{2}|\psi_{\pm}|^{2}\mathrm{~d}x\mathrm{~d}t$ and $\displaystyle\varepsilon^{2} s^{3}\lambda^{2}\int_{\Omega_{T}} \xi_{\pm}^{3}|\psi_{\pm}|^{2}\mathrm{~d}x\mathrm{~d}t$ are absorbed by the same term if we take respectively $s\geq s_{1}T^{2}$ and $\lambda\geq \lambda_{1}$. So we obtain 
\begin{align*}
	&\|L_{1}\psi_{\pm}\|^{2}_{L^{2}(\Omega_{T})}+\|L_{2}\psi_{\pm}\|^{2}_{L^{2}(\Omega_{T})}+c\,\varepsilon^{2}s^{3}\lambda^{4}\int_{\Omega_{T}}\xi_{\pm}^{3}|\psi_{\pm}|^{2}\mathrm{~d}x\mathrm{~d}t +c\varepsilon^{2}s\lambda^{2}\int_{\Omega_{T}}\xi_{\pm}|\nabla\psi_{\pm}|^{2}\mathrm{~d}x\mathrm{~d}t + 2\;I_{\pm}\\
	& \quad\leqslant C\left( \|L_{3}\psi_{\pm}\|^{2}_{L^{3}(\Omega_{T})}+\varepsilon^{2}s^{3}\lambda^{4}\int_{\omega^{\prime}_{T}}\xi_{\pm}^{3}|\psi_{\pm}|^{2}\mathrm{~d}x\mathrm{~d}t +\varepsilon^{2}s\lambda^{2}\int_{\omega^{\prime}_{T}}\xi_{\pm}|\nabla\psi_{\pm}|^{2}\mathrm{~d}x\mathrm{~d}t \right),
\end{align*}
for any $\lambda\geq \lambda_{1}$ and any $$s\geq s_{1}\left(\frac{T}{\varepsilon}+\frac{T^{2}}{\varepsilon}\left(1+\|\nabla^{2} f\|_{\infty}+\|\nabla f\|_{\infty} +\|\nabla\partial_{t} f\|^{2/3}_{\infty}+\|\nabla f\|^{2/3}_{\infty}+ \|\partial_{t}^{2} f\|^{1/3}_{\infty}\right)\right).$$
From \eqref{c0}, we obtain 
\begin{eqnarray}
	\|L_{3}\psi_{\pm}\|^{2}_{L^{3}(\Omega_{T})}&\leqslant&  C\left(\int_{\Omega_{T}} |F_{\pm}|^{2}\mathrm{~d}x\mathrm{~d}t+\varepsilon^{2}s^{2}\lambda^{2}\int_{\Omega_{T}}\xi_{\pm}^{2} |\psi_{\pm}|^{2}\mathrm{~d}x\mathrm{~d}t +\|\Delta f\|_{\infty}^{2}\int_{\Omega_{T}} |\psi_{\pm}|^{2}\mathrm{~d}x\mathrm{~d}t\right) \nonumber\\
	&\leqslant& C\left(\int_{\Omega_{T}} |F_{\pm}|^{2}\mathrm{~d}x\mathrm{~d}t+\left(\varepsilon^{2}s^{2}\lambda^{2}T^{2}+\|\Delta f\|_{\infty}^{2}T^{6}\right)\int_{\Omega_{T}}\xi_{\pm}^{3} |\psi_{\pm}|^{2}\mathrm{~d}x\mathrm{~d}t\right).  \label{c38}
\end{eqnarray}
The last term in the right hand side of \eqref{c38} is absorbed by $\displaystyle c\,\varepsilon^{2}s^{3}\int_{\Omega_{T}}\xi_{\pm}^{3}|\psi_{\pm}|^{2}\mathrm{~d}x\mathrm{~d}t$ for $s\geq s_{1}\left(T^{2}+ \frac{T^{2}}{\varepsilon^{2/3}}\|\Delta f\|_{\infty}^{2/3}\right)$ and $\lambda\geq 1$.\\
Finally, we obtain
    \begin{align}
		& \|L_{1}\psi_{\pm}\|^{2}_{L^{2}(\Omega_{T})}+\|L_{2}\psi_{\pm}\|^{2}_{L^{2}(\Omega_{T})}+c\,\varepsilon^{2}s^{3}\lambda^{4}\int_{\Omega_{T}}\xi_{\pm}^{3}|\psi_{\pm}|^{2}\mathrm{~d}x\mathrm{~d}t +c\,\varepsilon^{2}s\lambda^{2}\int_{\Omega_{T}}|\nabla\psi_{\pm}|^{2}\xi_{\pm}\mathrm{~d}x\mathrm{~d}t + 2\,I_{\pm} \nonumber\\
		& \quad \leqslant C\left( \int_{\Omega_{T}} |F_{\pm}|^{2}\mathrm{~d}x\mathrm{~d}t+\varepsilon^{2}s^{3}\lambda^{4}\int_{\omega^{\prime}_{T}}\xi_{\pm}^{3}|\psi_{\pm}|^{2}\mathrm{~d}x\mathrm{~d}t +\varepsilon^{2}s\lambda^{2}\int_{\omega^{\prime}_{T}}\xi_{\pm}|\nabla\psi_{\pm}|^{2}\mathrm{~d}x\mathrm{~d}t \right),
		\label{c21}
\end{align}
for any $\varepsilon\in (0,1)$, any $\lambda\geq \lambda_{1}$ and any $s\geq \frac{s_{1}}{\varepsilon}(T+T^{2})\mathcal{A}_{T}(f),$
where
$$\mathcal{A}_{T}(f):=1+\|\nabla f\|_{\infty}+\|\nabla^{2}f\|_{\infty}+\|\nabla\partial_{t} f\|^{2/3}_{\infty}+\|\nabla f\|^{2/3}_{\infty}+ \|\partial_{t}^{2} f\|^{1/3}_{\infty}+\|\Delta f\|_{\infty}^{2/3}.$$
\textbf{\underline{Step 4.} Elimination of the integral of $|\nabla\psi_{\pm}|^{2}$ in the right-hand side of \eqref{c21}.}\\
We start by adding integral of $|\Delta \psi_{\pm}|^{2}$ to the left-hand side of \eqref{c21}, so that we can eliminate the last term in the right-hand side of \eqref{c21}. Using \eqref{c-1}, $\xi_{\pm}\geq \frac{4}{T^{2}}$, $s\geq s_{1}T^{2}$ and $|\partial_{t}\alpha_{\pm}|\leqslant T\xi_{\pm}^{2}$, we
obtain
\begin{eqnarray}
	\varepsilon^{2}s^{-1}\int_{\Omega_{T}}\xi^{-1}_{\pm}|\Delta\psi_{\pm}|^{2}\mathrm{~d}x\mathrm{~d}t
	&\leqslant&  C\left(\varepsilon^{2}s^{3}\lambda^{4}\int_{\Omega_{T}}\xi_{\pm}^{3}|\psi_{\pm}|^{2}\mathrm{~d}x\mathrm{~d}t +s T^{2}\int_{\Omega_{T}}\xi^{3}_{\pm}|\psi_{\pm}|^{2}\mathrm{~d}x\mathrm{~d}t\right. \nonumber\\
	&& + \left. \left(\|\nabla f\|^{4}_{\infty}+\|\partial_{t} f\|^{2}_{\infty}\right)\frac{s^{-1}}{\varepsilon^{2}}\int_{\Omega_{T}}\xi^{-1}_{\pm}|\psi_{\pm}|^{2}\mathrm{~d}x\mathrm{~d}t+ \|L_{2}\psi_{\pm}\|^{2}_{L^{2}(\Omega_{T})}\right).\nonumber
\end{eqnarray}
Hence
\begin{align*}
	&\varepsilon^{2}s^{-1}\int_{\Omega_{T}}\xi^{-1}_{\pm}|\Delta\psi_{\pm}|^{2}\mathrm{~d}x\mathrm{~d}t
	\leqslant  C\left(\varepsilon^{2}s^{3}\lambda^{4}\int_{\Omega_{T}}\xi_{\pm}^{3}|\psi_{\pm}|^{2}\mathrm{~d}x\mathrm{~d}t+\|L_{2}\psi_{\pm}\|^{2}_{L^{2}(\Omega_{T})}\right),
\end{align*}
for  all $\lambda\geq 1$ and all $s\geq \frac{s_{1}}{\varepsilon}\left(
T+\left(1+\|\nabla f\|_{\infty}+\|\partial_{t} f\|^{1/2}_{\infty}\right)T^{2}\right)$.
\\
Consequently, we deduce from \eqref{c21} that
\begin{align}
	& \varepsilon^{2}s^{-1}\int_{\Omega_{T}}\xi^{-1}_{\pm}|\Delta\psi_{\pm}|^{2}\mathrm{~d}x\mathrm{~d}t+\varepsilon^{2}s^{3}\lambda^{4}\int_{\Omega_{T}}\xi_{\pm}^{3}|\psi_{\pm}|^{2}\mathrm{~d}x\mathrm{~d}t+\varepsilon^{2}s\lambda^{2}\int_{\Omega_{T}}|\nabla\psi_{\pm}|^{2}\xi_{\pm}\mathrm{~d}x\mathrm{~d}t + c\,I_{\pm} \nonumber\\
	&\quad \leqslant C\left( \int_{\Omega_{T}} |F_{\pm}|^{2}\mathrm{~d}x\mathrm{~d}t+\varepsilon^{2}s^{3}\lambda^{4}\int_{\omega^{\prime}_{T}}\xi_{\pm}^{3}|\psi_{\pm}|^{2}\mathrm{~d}x\mathrm{~d}t+\varepsilon^{2}s\lambda^{2}\int_{\omega^{\prime}_{T}}|\nabla\psi_{\pm}|^{2}\xi_{\pm}\mathrm{~d}x\mathrm{~d}t \right), 
	\label{c23}
\end{align}
for any $\lambda\geq \lambda_{1}$ and any $s\geq \frac{s_{1}}{\varepsilon}\left(T+\mathcal{B}_{T}(f)T^{2}\right),$
where
$$\mathcal{B}_{T}(f):=1+\|\nabla f\|_{\infty}+\|\nabla^{2}f\|_{\infty}+\|\nabla\partial_{t} f\|^{2/3}_{\infty}+\|\nabla f\|^{2/3}_{\infty}+ \|\partial_{t}^{2} f\|^{1/3}_{\infty}+\|\Delta f\|_{\infty}^{2/3}+\|\partial_{t} f\|^{1/2}_{\infty}.$$
\par 
As usual, to eliminate the last term on the right-hand side of \eqref{c23}, let us introduce $\theta\in \mathcal{C}^{2}(\omega)$ a positive cut-off function such that $\theta=1$ in $\omega^{\prime}$, an integration by parts and the Cauchy-Schwarz inequality as in \cite{guerrero2007singular}, we obtain
\begin{eqnarray*}
    C\varepsilon^{2}s\lambda^{2}\int_{\omega^{\prime}_{T}}|\nabla\psi_{\pm}|^{2}\xi_{\pm}\mathrm{~d}x\mathrm{~d}t &=& C\varepsilon^{2}s\lambda^{2}\int_{\omega^{\prime}_{T}}\theta|\nabla\psi_{\pm}|^{2}\xi_{\pm}\mathrm{~d}x\mathrm{~d}t\leq C\varepsilon^{2}s\lambda^{2}\int_{\omega_{T}}\theta|\nabla\psi_{\pm}|^{2}\xi_{\pm}\mathrm{~d}x\mathrm{~d}t\\
    &\leq & \frac{1}{2}\left(\varepsilon^{2}s^{-1}\int_{\Omega_{T}}\xi^{-1}_{\pm}|\Delta\psi_{\pm}|^{2}\mathrm{~d}x\mathrm{~d}t+\varepsilon^{2}s\lambda^{2}\int_{\Omega_{T}}\xi_{\pm}|\nabla\psi_{\pm}|^{2}\mathrm{~d}x\mathrm{~d}t\right) + C\varepsilon^{2}s^{3}\lambda^{4}\int_{\omega_{T}}\xi_{\pm}^{3}|\psi_{\pm}|^{2}\mathrm{~d}x\mathrm{~d}t.
\end{eqnarray*}
Combining this last estimate with \eqref{c23}, we conclude that 
\begin{eqnarray}
	&&\varepsilon^{2}s^{-1}\int_{\Omega_{T}}\xi^{-1}_{\pm}|\Delta\psi_{\pm}|^{2}\mathrm{~d}x\mathrm{~d}t+	
	\varepsilon^{2}s^{3}\lambda^{4}\int_{\Omega_{T}}\xi_{\pm}^{3}|\psi_{\pm}|^{2}\mathrm{~d}x\mathrm{~d}t +\varepsilon^{2}s\lambda^{2}\int_{\Omega_{T}}\xi_{\pm}|\nabla\psi_{\pm}|^{2}\mathrm{~d}x\mathrm{~d}t + c\,I_{\pm} \nonumber\\
	&& \quad \leqslant C\left( \int_{\Omega_{T}} |F_{\pm}|^{2}\mathrm{~d}x\mathrm{~d}t+\varepsilon^{2}s^{3}\lambda^{4}\int_{\omega_{T}}\xi_{\pm}^{3}|\psi_{\pm}|^{2}\mathrm{~d}x\mathrm{~d}t \right), \label{c24}
\end{eqnarray}
for any $\lambda\geq \lambda_{1}$ and any $s\geq\frac{s_{1}}{\varepsilon}\left(T+\mathcal{B}_{T}(f)T^{2}\right)$.\\
\textbf{\underline{Step 5.} Simplification of the boundary terms.}
By summing \eqref{c24} for $i=+,-$, we obtain 
\begin{eqnarray}
	&&\varepsilon^{2}s^{3}\lambda^{4}\int_{\Omega_{T}}\xi_{+}^{3}|\psi_{+}|^{2}\mathrm{~d}x\mathrm{~d}t +\varepsilon^{2}s\lambda^{2}\int_{\Omega_{T}}\xi_{+}|\nabla\psi_{+}|^{2}\mathrm{~d}x\mathrm{~d}t + c\,\left(I_{+}+I_{-}\right) \nonumber\\
	&&\quad\leqslant C\left( \int_{\Omega_{T}} \left(|F_{+}|^{2}+|F_{-}|^{2}\right)\mathrm{~d}x\mathrm{~d}t+\varepsilon^{2}s^{3}\lambda^{4}\int_{\omega_{T}}\left(\xi_{+}^{3}|\psi_{+}|^{2}+\xi_{-}^{3}|\psi_{-}|^{2}\right)\mathrm{~d}x\mathrm{~d}t \right). \label{c25}
\end{eqnarray}
From the definitions of $\xi_{\pm}$ and $\alpha_{\pm}$, we have 
$\xi_{-}\leqslant \xi_{+}\quad \text{and}\quad \alpha_{+}\leqslant \alpha_{-}\;\;\text{in}\;\;\Omega_{T}.$
Then, the estimate \eqref{c25} gives 
\begin{eqnarray*}
	&&\varepsilon^{2}s^{3}\lambda^{4}\int_{\Omega_{T}}\xi_{+}^{3}|\psi_{+}|^{2}\mathrm{~d}x\mathrm{~d}t +\varepsilon^{2}s\lambda^{2}\int_{\Omega_{T}}\xi_{+}|\nabla\psi_{+}|^{2}\mathrm{~d}x\mathrm{~d}t + c\,\left(I_{+}+I_{-}\right) \nonumber\\
	&& \quad\leqslant C\left( \int_{\Omega_{T}} |F_{+}|^{2}\mathrm{~d}x\mathrm{~d}t+\varepsilon^{2}s^{3}\lambda^{4}\int_{\omega_{T}}\xi_{+}^{3}|\psi_{+}|^{2}\mathrm{~d}x\mathrm{~d}t \right).
	\label{c26}
\end{eqnarray*} 
Before simplifying $I_{+}+I_{-}$, we  turn back to our original function $\Phi$. From \eqref{c-4}, we deduce that 
\begin{eqnarray}
	&&\varepsilon^{2}s^{3}\lambda^{4}\int_{\Omega_{T}}\exp(-2s\alpha_{+})\xi_{+}^{3}|\Phi|^{2}\mathrm{~d}x\mathrm{~d}t+\varepsilon^{2}s\lambda^{2}\int_{\Omega_{T}}|\nabla\psi_{+}|^{2}\xi_{+}\mathrm{~d}x\mathrm{~d}t + c\,\left(I_{+}+I_{-}\right) \nonumber\\
	&&\quad \leqslant C\left( \int_{\Omega_{T}} \exp(-2s\alpha_{+})|\partial_{t}\Phi+\varepsilon\Delta\Phi-a_{\varepsilon}(f)\Phi|^{2}\mathrm{~d}x\mathrm{~d}t +\varepsilon^{2}s^{3}\lambda^{4}\int_{\omega_{T}}\exp(-2s\alpha_{+})\xi_{+}^{3}|\Phi|^{2}\mathrm{~d}x\mathrm{~d}t \right). 
	\label{c32}
\end{eqnarray}
For $\nabla\Phi$, we use the identity given in \eqref{c-5}, we have
\begin{eqnarray*}
	\exp(-s\alpha_{+})\nabla\Phi=\nabla\psi_{+} - s\lambda\xi_{+}\nabla\eta\psi_{+}.
\end{eqnarray*}
Applying the triangular inequality to this identity, we find 
\begin{eqnarray*}
 \varepsilon^{2}s\lambda^{2}\int_{\Omega_{T}}\exp(-2s\alpha_{+})\xi_{+}|\nabla\Phi|^{2}\mathrm{~d}x\mathrm{~d}t
	\leqslant C\left(\varepsilon^{2}s\lambda^{2}\int_{\Omega_{T}}\xi_{+}|\nabla\psi_{+}|^{2}\mathrm{~d}x\mathrm{~d}t+\varepsilon^{2}s^{3}\lambda^{4}\int_{\Omega_{T}}\exp(-2s\alpha_{+})\xi_{+}^{3}|\Phi|^{2}\mathrm{~d}x\mathrm{~d}t\right).
\end{eqnarray*}
Consequently, we can add the previous integral of 
$|\nabla\Phi|^{2}$ to the left-hand side of \eqref{c32}:
\begin{eqnarray}
	&&\varepsilon^{2}s^{3}\lambda^{4}\int_{\Omega_{T}}\exp(-2s\alpha_{+})\xi_{+}^{3}|\Phi|^{2}\mathrm{~d}x\mathrm{~d}t +\varepsilon^{2}s\lambda^{2}\int_{\Omega_{T}}\exp(-2s\alpha_{+})\xi_{+}|\nabla\Phi|^{2}\mathrm{~d}x\mathrm{~d}t + c\,\left(I_{+}+I_{-}\right) \nonumber\\
	&&\quad \leqslant C\left( \int_{\Omega_{T}} \exp(-2s\alpha_{+})|\partial_{t}\Phi+\varepsilon\Delta\Phi-a_{\varepsilon}(f)\Phi|^{2}\mathrm{~d}x\mathrm{~d}t +\varepsilon^{2}s^{3}\lambda^{4}\int_{\omega_{T}}\exp(-2s\alpha_{+})\xi_{+}^{3}|\Phi|^{2}\mathrm{~d}x\mathrm{~d}t \right).
	\label{c33}
\end{eqnarray}
Next, we will simplify $I_{+}+I_{-}$. It is clear that 
\begin{eqnarray*}
	 I_{+}+I_{-}&=& - 2\varepsilon^{2}s\lambda^{2}\int_{\Gamma_{T}}\xi|\nabla\eta|^{2}\psi\left[\partial_{\n}\psi_{+}+\partial_{\n}\psi_{-}\right]\mathrm{~d}\sigma\mathrm{~d}t   - 2\varepsilon^{2}s\lambda\int_{\Gamma_{T}}\xi\partial_{\n}\eta\left[|\partial_{\n}\psi_{+}|^{2}-|\partial_{\n}\psi_{-}|^{2}\right]\mathrm{~d}\sigma\mathrm{~d}t \nonumber\\
	&& + \varepsilon^{2}s\lambda\int_{\Gamma_{T}}\xi\partial_{\n}\eta \left[|\nabla\psi_{+}|^{2}-|\nabla\psi_{-}|^{2}\right]\mathrm{~d}\sigma\mathrm{~d}t  +\varepsilon\int_{\Gamma_{T}}\partial_{t}\psi\left[\partial_{\n}\psi_{+}+\partial_{\n}\psi_{-}\right]\mathrm{~d}\sigma\mathrm{~d}t.
\end{eqnarray*}
Using $|\nabla\psi_{\pm}|^{2}=|\nabla_{\Gamma}\psi|^{2}+|\partial_{\n}\psi_{\pm}|^{2}$, \eqref{c5}, \eqref{c-6}, \eqref{c-3}   and $\varepsilon\partial_{\n}\Phi=-\frac{\partial_{\n}f}{2}\Phi$, we obtain
\begin{eqnarray*}
	I_{+}+I_{-}&=& 2\varepsilon s\lambda^{2}	\int_{\Gamma_{T}}\partial_{\n}f
	|\nabla\eta|^{2}\xi\exp(-2s\alpha)|\Phi|^{2}\mathrm{~d}\sigma\mathrm{~d}t +2\varepsilon s^{2}\lambda^{2}
	\int_{\Gamma_{T}}\partial_{\n}f
	|\partial_{\n}\eta|^{2}\xi^{2}\exp(-2s\alpha)|\Phi|^{2}\mathrm{~d}\sigma\mathrm{~d}t\quad %(C)\;(D)=2+3
        \nonumber\\ 
	&& -\int_{\Gamma_{T}}\partial_{\n}f\partial_{t}\psi\psi\mathrm{~d}\sigma\mathrm{~d}t. \quad %(E)
\end{eqnarray*}
Integrating the last integral with respect to time, we obtain 
\begin{eqnarray*}
	I_{+}+I_{-}&=& 2\varepsilon s\lambda^{2}	\int_{\Gamma_{T}}\partial_{\n}f
	|\nabla\eta|^{2}\xi\exp(-2s\alpha)|\Phi|^{2}\mathrm{~d}\sigma\mathrm{~d}t+2\varepsilon s^{2}\lambda^{2}
	\int_{\Gamma_{T}}\partial_{\n}f
	|\partial_{\n}\eta|^{2}\xi^{2}\exp(-2s\alpha)|\Phi|^{2}\mathrm{~d}\sigma\mathrm{~d}t\quad \nonumber\\ 
	&& +\int_{\Gamma_{T}}\partial_{t}(\partial_{\n}f)\exp(-2s\alpha)|\Phi|^{2}\mathrm{~d}\sigma\mathrm{~d}t. \quad 
\end{eqnarray*}
The estimate \eqref{c33} and $\partial_{\n}f\geq 0$ on $\Gamma_{T}$ implies the following

\begin{eqnarray}
	&&\varepsilon^{2}s^{3}\lambda^{4}\int_{\Omega_{T}}\exp(-2s\alpha_{+})\xi_{+}^{3}|\Phi|^{2}\mathrm{~d}x\mathrm{~d}t +\varepsilon^{2}s\lambda^{2}\int_{\Omega_{T}}\exp(-2s\alpha_{+})\xi_{+}|\nabla\Phi|^{2}\mathrm{~d}x\mathrm{~d}t \nonumber\\
 &&+2\varepsilon s\lambda^{2}
	\int_{\Gamma_{T}}\partial_{\n}f
	|\partial_{\n}\eta|^{2}\left(\xi+s\xi^{2}\right)\exp(-2s\alpha)|\Phi|^{2}\mathrm{~d}\sigma\mathrm{~d}t \nonumber\\
	&&\leqslant C\left( \int_{\Omega_{T}} \exp(-2s\alpha_{+})|\partial_{t}\Phi+\varepsilon\Delta\Phi-a_{\varepsilon}(f)\Phi|^{2}\mathrm{~d}x\mathrm{~d}t +\varepsilon^{2}s^{3}\lambda^{4}\int_{\omega_{T}}\exp(-2s\alpha_{+})\xi_{+}^{3}|\Phi|^{2}\mathrm{~d}x\mathrm{~d}t \right. \nonumber\\
	&&\left. \quad +\|\partial_{t}\partial_{\n}f\|_{\infty}\int_{\Gamma_{T}}\exp(-2s\alpha)|\Phi|^{2}\d\sigma\d t\right). \label{cc34}
\end{eqnarray}
for any $\lambda\geq \lambda_{1}$ and any $s\geq \frac{s_{1}}{\varepsilon}\left(T+\mathcal{B}_{T}(f)T^{2}\right).$ Since $ \partial_{\mathbf{n}}f\geq c>0$ on $\Gamma_{T}$ then, $(\partial_{\mathbf{n}}f)^{-1}\in L^{\infty}(\Gamma_{T})$. Consequently, taking $s\geq \frac{s_{1}}{\varepsilon}(\|\partial_{t}\partial_{\n}f\|_{L^{\infty}(\Gamma_{T})}+\|(\partial_{\n}f)^{-1}\|_{L^{\infty}(\Gamma_{T})})T^{2}$, we can absorb the last term in the right-hand side of \eqref{cc34} by $\displaystyle\varepsilon s^{2}\lambda^{2}
\int_{\Gamma_{T}}\partial_{\n}f
|\partial_{\n}\eta|^{2}\xi^{2}\exp(-2s\alpha)|\Phi|^{2}\mathrm{~d}\sigma\mathrm{~d}t$. Finally, using the density argument explained at the beginning of the proof, we can deduce the estimate \eqref{Carleman}. \qed

\section*{ACKNOWLEDGMENTS}
\thanks{J.A.B.P was supported by the Grant PID2021-126813NB-I00 
	funded by MCIN/AEI/10.13039/501100011033 and by “ERDF A way of making 
	Europe” and by the grant~IT1615-22 funded the Basque Government.}\\
    
    \noindent\thanks{The authors would like to thank the anonymous reviewer for their valuable comments and suggestions, which contributed to improving the earlier version of this paper.
    }
\section*{Conflict of interest}
The authors declare no potential conflict of interests.

%\nocite{*}% Show all bib entries - both cited and uncited; comment this line to view only cited bib entries;

\end{document}